\documentclass[11pt,a4paper,dvipsnames,x11names]{amsart}
\usepackage[margin=30truemm]{geometry}
\usepackage{stix2}
\usepackage{microtype}
\usepackage{amsmath,amsthm,mathtools,amscd}
\usepackage{aliascnt}
\usepackage{enumitem}
\usepackage{xcolor}
\usepackage{tikz}
\usepackage{tikz-cd}
\usetikzlibrary{positioning,arrows.meta,decorations.markings,calc}

\usepackage[backend=biber,style=alphabetic,sorting=nyt,maxnames=20]{biblatex}
\usepackage{xurl}
\usepackage{hyperref}
\usepackage{bookmark}
\usepackage{cleveref}

\definecolor{FushimiBlue}{HTML}{175A8C}
\definecolor{FushimiRed}{HTML}{8B2F3F}
\definecolor{FushimiViolet}{HTML}{66507A}
\definecolor{FushimiOrange}{HTML}{A25B23}

\hypersetup{
	colorlinks=true,
	linkcolor=FushimiRed,
	citecolor=FushimiOrange,
	urlcolor=FushimiOrange,
	filecolor=FushimiBlue,
	pdfauthor={Riku Fushimi},
	pdftitle={Cluster Morita theorem for negative cluster categories}
}

\allowdisplaybreaks
\setlist[enumerate]{leftmargin=2.35em,itemsep=0.12em,topsep=0.32em,parsep=0pt}
\setlist[itemize]{leftmargin=2.15em,itemsep=0.12em,topsep=0.32em,parsep=0pt}

\newtheoremstyle{fushimi}
	{0.65\baselineskip}
	{0.55\baselineskip}
	{\normalfont}
	{}
	{\bfseries}
	{.}
	{0.5em}
	{}
\theoremstyle{fushimi}
\newtheorem{mainthm}{Theorem}

\newtheorem{thm}{Theorem}[section]

\newaliascnt{prop}{thm}
\newtheorem{prop}[prop]{Proposition}
\aliascntresetthe{prop}
\newaliascnt{lem}{thm}
\newtheorem{lem}[lem]{Lemma}
\aliascntresetthe{lem}
\newaliascnt{cor}{thm}
\newtheorem{cor}[cor]{Corollary}
\aliascntresetthe{cor}
\newaliascnt{dfn}{thm}
\newtheorem{dfn}[dfn]{Definition}
\aliascntresetthe{dfn}
\newaliascnt{rmk}{thm}
\newtheorem{rmk}[rmk]{Remark}
\aliascntresetthe{rmk}
\newaliascnt{exa}{thm}
\newtheorem{exa}[exa]{Example}
\aliascntresetthe{exa}

\numberwithin{equation}{section}

\crefname{mainthm}{Theorem}{Theorems}
\Crefname{mainthm}{Theorem}{Theorems}
\crefname{thm}{Theorem}{Theorems}
\crefname{prop}{Proposition}{Propositions}
\crefname{lem}{Lemma}{Lemmas}
\crefname{cor}{Corollary}{Corollaries}
\crefname{dfn}{Definition}{Definitions}
\crefname{rmk}{Remark}{Remarks}
\crefname{exa}{Example}{Examples}
\crefname{section}{Section}{Sections}
\crefname{subsection}{Subsection}{Subsections}

\newcommand{\defterm}[1]{\textcolor{blue}{\textit{#1}}}

\DeclareMathOperator{\Hom}{Hom}
\DeclareMathOperator{\End}{End}
\DeclareMathOperator{\Ext}{Ext}
\DeclareMathOperator{\id}{id}
\DeclareMathOperator{\Filt}{Filt}
\DeclareMathOperator{\add}{add}
\DeclareMathOperator{\thick}{thick}
\DeclareMathOperator{\per}{per}
\DeclareMathOperator{\pvd}{pvd}
\DeclareMathOperator{\cosg}{cosg}
\DeclareMathOperator{\sg}{sg}
\DeclareMathOperator{\CM}{CM}
\DeclareMathOperator{\modu}{mod}
\DeclareMathOperator{\REnd}{\mathbf{R}End}
\DeclareMathOperator{\RHom}{\mathbf{R}Hom}
\DeclareMathOperator{\HHom}{HH}
\DeclareMathOperator{\HChom}{HC}
\DeclareMathOperator{\HN}{HN}

\newcommand{\A}{\mathcal A}
\newcommand{\C}{\mathcal C}
\newcommand{\D}{\mathcal D}
\newcommand{\Hcal}{\mathcal H}
\newcommand{\Lcal}{\mathcal L}
\newcommand{\F}{\mathcal F}
\newcommand{\DD}{\mathbb D}
\newcommand{\Serre}{\mathbb S}
\newcommand{\Xcal}{\mathcal X}
\newcommand{\Ycal}{\mathcal Y}
\newcommand{\Scal}{\mathcal S}
\newcommand{\Pdg}{\mathcal P}
\newcommand{\Tdg}{\mathcal T}
\newcommand{\Qdg}{\mathcal Q}

\newcommand{\dg}{\mathrm{dg}}
\newcommand{\xto}[1]{\xrightarrow{#1}}
\newcommand{\mono}{\hookrightarrow}

\title[Cluster Morita theorem for negative cluster categories]{Cluster Morita theorem for negative cluster categories}
\author{Riku Fushimi}
\address{Department of Mathematics, Nagoya University, Chikusa-ku, Nagoya 464-8602, Japan}
\email{fushimi.riku.h9@s.mail.nagoya-u.ac.jp}
\date{}
\subjclass[2020]{Primary 16E45, 18G80; Secondary 16E35}
\keywords{simple-minded systems, differential graded algebras, Koszul duality, singularity categories, Calabi--Yau structures, cluster categories}

\begin{document}

\begin{abstract}
Fix an integer $d\leq-2$. We characterize Hom-finite algebraic triangulated categories admitting a $(-d)$-simple-minded system as stable categories $\underline{\CM}(B)$ of proper $(-d)$-self-injective non-positive dg algebras; equivalently, each admits a $d$-stable locally finite strictly positive dg model whose cosingular dg quotient recovers the chosen enhancement. Under a $d$-Calabi--Yau hypothesis, we give a characterization theorem for acyclic negative cluster categories in terms of the finite graded extension algebra of a simple-minded system. At chain level, Hochschild and reduced cyclic localization identify right $(d+1)$-Calabi--Yau structures on the finite-dimensional part with normalized right $d$-Calabi--Yau structures on the cosingular quotient. Finally, when the Koszul dual is proper, the Brav--Dyckerhoff evaluation morphism is a quasi-isomorphism of mixed complexes, yielding left--right Calabi--Yau symmetry.
\end{abstract}

\maketitle

\setcounter{tocdepth}{1}
\tableofcontents

\section{Introduction}

Cluster categories were introduced by Buan--Marsh--Reineke--Reiten--Todorov and endowed with canonical triangulated structures by Keller \cite{BMRRT06,Keller05}. One of the basic themes of the subject is that distinguished finite data, most notably a cluster-tilting object, can control the ambient triangulated category. Keller--Reiten made this precise for algebraic $2$-Calabi--Yau categories with an acyclic cluster-tilting object, with higher-dimensional variants under additional vanishing hypotheses \cite{KR08}. This point of view has developed in several complementary directions: Amiot and Guo constructed generalized and higher cluster categories, Iyama--Yang developed a general framework realizing Verdier quotients as ideal quotients, including the fundamental-domain realization of Amiot--Guo--Keller cluster categories, and Hanihara, Keller--Liu, and Tomonaga obtained Morita- or recognition-type results in related settings \cite{Amiot09,Guo11,IY20,Hanihara22,KellerLiuAmiot24,Tomonaga26}.

On the negative Calabi--Yau side, the corresponding distinguished objects are simple-minded systems. They were introduced in stable module categories by Koenig--Liu and developed in general triangulated categories by Dugas \cite{KoenigLiu12,Dugas15}. Higher simple-minded systems and negative cluster categories have since been studied through mutation, reduction, silting theory, simple-minded collections, quotient constructions, and Coxeter combinatorics \cite{CoelhoPauksztello20,Jin23Reduction,IyamaJin23,CoelhoPauksztelloPloog22,Fedele22}. Jin also introduced $w$-self-injective dg algebras and proved that their simple modules form $w$-simple-minded systems in stable categories of Cohen--Macaulay dg modules \cite[Theorem~5.6]{Jin20}. Thus there is a well-developed theory producing and studying simple-minded systems in negative Calabi--Yau and singularity settings. What has been missing, to the best of our knowledge, is a general Morita-type theorem in the reverse direction: starting from an abstract algebraic triangulated category equipped with a simple-minded system, reconstruct the ambient category from the simple-minded data.

There is an immediate obstruction to using ordinary endomorphisms for this purpose. If $\Lcal=\{L_1,\dots,L_r\}$ is a basic simple-minded system and $L=\bigoplus_iL_i$, then
\[
\End(L)\simeq k^r.
\]
Hence degree-zero endomorphisms remember essentially only the number of simple-minded objects. The ambient information begins in higher extensions, and the natural invariant is the dg endomorphism algebra $\REnd(L)$. This is precisely where positive/non-positive Koszul duality enters. In \cite[Theorem~D]{Fus24}, contravariant Koszul duality for locally finite dg algebras was developed so as to exchange perfect and finite-dimensional derived categories. The basic idea of this paper is to use the positive side of that duality as a Morita model and to recover the original category as a cosingular quotient.

Fix $d\leq-2$. For a dg algebra $A$ with $\pvd(A)\subseteq\per(A)$, write
\[
\cosg(A)=\per(A)/\pvd(A).
\]
Our first result is a dg-level realization theorem: every Hom-finite algebraic triangulated category with a $(-d)$-simple-minded system admits a strictly positive model whose cosingular quotient recovers the chosen enhancement.

\begin{mainthm}[Cosingular realization; \cref{thm:realization}]\label{main:realization}
Let $\C$ be a Hom-finite algebraic triangulated category equipped with a pretriangulated dg enhancement $\C_{\dg}$, and let $\Lcal=\{L_1,\dots,L_r\}$ be a $(-d)$-simple-minded system in $\C$. Then there exists a locally finite strictly positive dg algebra $A$ with $A^0\simeq k^r$ and an exact quasi-functor $\per_{\dg}(A)\to\C_{\dg}$ inducing a quasi-equivalence
\[
\cosg_{\dg}(A)=\per_{\dg}(A)/\pvd_{\dg}(A)\simeq\C_{\dg}.
\]
Moreover, $A$ is $d$-stable in the sense of \cref{dfn:d-stable}.
\end{mainthm}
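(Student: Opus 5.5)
The construction takes $A$ to be the dg endomorphism algebra of the simple-minded system and then uses Koszul duality to pass to the non-positive side, where $\C$ can be identified.

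\textbf{Step 1: the positive model.} Set $L=\bigoplus_i L_i$ and let $A=\REnd_{\C_{\dg}}(L)$, replaced by a minimal or strictly positive model. By the axioms of a $(-d)$-simple-minded system:
\begin{itemize}
\item $\Hom(L_i,L_j)=\delta_{ij}k$, so $H^0(A)\simeq k^r$;
\item $\Hom(L,\Sigma^{-m}L)=0$ for $0<m<-d$, so $A$ has no cohomology in degrees $-d+1,\dots,-1$;
\item Hom-finiteness of $\C$ makes $A$ locally finite.
\end{itemize}
To conclude that $H^{<0}(A)=0$ in all degrees, and not only in the window $(d,0)$, I would use that $\C$ is the extension closure of the shifts $\Sigma^{-di}\Lcal$ (the generation axiom). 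By a Gram--Schmidt-type argument, $A$ is then quasi-isomorphic to a strictly positive dg algebra with $A^0=k^r$, after a possible adjustment of the model. I expect the $d$-stability of \cref{dfn:d-stable} to be exactly the shape of $H^*(A)$ that these axioms force, so it should follow once $A$ is written down.

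\textbf{Step 2: the comparison functor.} The module $L$ gives an exact quasi-functor
\[
F=-\otimes_A L\colon \per_{\dg}(A)\to\C_{\dg}
\]
with $A\mapsto L$. It is fully faithful on $A$ by construction, so $F$ is fully faithful on $\thick(A)=\per(A)$ when computed in the derived category of $\C_{\dg}$. Hence this first form of $F$ cannot have kernel $\pvd(A)$. So $F$ is not the right functor, and the quotient must come through Koszul duality.

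\textbf{Step 3: the Koszul dual and the quotient.} I would instead use the contravariant Koszul duality of \cite[Theorem~D]{Fus24}. Let $B$ be the non-positive dual of $A$. The duality exchanges $\per(A)$ with $\pvd(B)$ and $\pvd(A)$ with $\per(B)$, so
\[
\cosg(A)\simeq(\pvd(B)/\per(B))^{op}.
\]
The task is then to realize $\C$ as $\pvd(B)/\per(B)$. The objects $L_i$ should correspond to the simple $B$-modules, which are the images of the simple $A$-modules.

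The canonical functor is built as follows. Realize $\C_{\dg}$ as a quotient of a larger pretriangulated dg category $\widetilde{\C}_{\dg}$ of twisted complexes over the free closure of $\Lcal$. This category maps to $\per_{\dg}(A)$ by $X\mapsto\RHom(L,X)$, suitably completed or dualized. I would check that the kernel of the induced functor is $\pvd(A)$ by a dévissage against the simples, and that essential surjectivity holds because $\Lcal$ generates $\C$.

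\textbf{Main obstacle.} The hardest step is identifying the kernel with $\pvd(A)$ precisely and showing the induced functor from the Verdier quotient is fully faithful. For this I would use the Drinfeld dg quotient and compute morphisms via the colimit over $\pvd$-approximations. The key input is the $(-d)$-vanishing, which should make these approximation towers stabilize degreewise; local finiteness keeps the relevant Hom spaces finite-dimensional.
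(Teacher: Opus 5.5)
\textbf{Step 1 rests on a false claim, and the rest of the proposal inherits it.} Write $R=\REnd_{\C_{\dg}}(L)$. The axioms only give $H^i(R)=\Hom_\C(L,\Sigma^iL)=0$ for $d+1\leq i\leq-1$. Generation does not control degrees $\leq d$. In fact $H^d(R)=\Hom_\C(L,\Sigma^dL)$ is never zero: otherwise axioms (ii), (iii) and d\'evissage give $\Hom_\C(\C,\Sigma^dL)=0$, forcing $\Sigma^dL=0$. In \cref{exa:cyclic-family}, $H^*(R)$ is even nonzero in every degree divisible by $d$. So no Gram--Schmidt adjustment makes $R$ quasi-isomorphic to a strictly positive algebra.

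Your Step 2 is exactly the symptom of this: with $A\simeq R$, the functor $-\otimes_AL$ would be fully faithful and could not kill $S_A\neq0$. The right conclusion is to change $A$, not the functor. Take the strictly positive dg \emph{subalgebra} $A\subseteq R$ with $A^0=\ell=\bigoplus_i ke_i$, $A^1$ an $\ell$-bimodule complement of $d(R^0)$ in $R^1$, and $A^{\geq2}=R^{\geq2}$ (\cref{lem:construct-A}). The inclusion $A\to R$ is not a quasi-isomorphism. It does, however, give $H^i(A)\simeq\Hom_\C(L,\Sigma^iL)$ for all $i\geq d+1$, which is all that is needed. Then $-\otimes_AL$, i.e.\ $\per_{\dg}(A)\to\per_{\dg}(R)\simeq\C_{\dg}$, is precisely the functor $\pi$ the paper uses, and it is far from fully faithful.

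\textbf{You also treat $d$-stability as automatic, but it is the heart of the proof.} \Cref{dfn:d-stable}(i) requires $S_A$ to be finitely built from $A$ inside $\F_A$, in particular $S_A\in\per(A)$. Without this, neither $\cosg(A)$ nor the Koszul-duality identification you invoke in Step 3 is available. The missing idea is to manufacture $S_A$ from $\C$:
\begin{enumerate}
\item The injective envelope $L\hookrightarrow I$ in $\Hcal_\Lcal$ lies both in $\Hcal_\Lcal$ and in $\Sigma^{-2}\Hcal_\Lcal*\cdots*\Sigma^d\Hcal_\Lcal$ (\cref{lem:injectives}, from axiom (iii) applied to $\Sigma L$).
\item Lift these two presentations to $\widetilde I\in\Hcal_A$ and $\widetilde X\in\Sigma^{-2}\F^{d+2}(\Hcal_A)$.
\item Lift the isomorphism $\pi\widetilde X\simeq\pi\widetilde I$ to a map $\widetilde X\to\widetilde I$ via the window Hom-comparison (\cref{lem:hom-comparison}).
\item The cone of this map lies in $\F_A$, is killed by $\pi$, and has cohomology $k^r$ in degree $0$, hence is $S_A$.
\end{enumerate}
This yields (i), the factorization of $\pi$ through $\cosg(A)$, and then (ii). Full faithfulness follows from the fundamental domain $\F_A/[\add S_A]\simeq\cosg(A)$ of \cref{prop:fundamental-domain}, together with the Hom-comparison; that proposition is where Koszul duality and Jin's results enter. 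Density comes from lifting extension windows.

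\textbf{Step 3 offers no substitute.} Reducing to $\C\simeq\pvd(B)/\per(B)$ just restates the theorem. The functor $X\mapsto\RHom(L,X)$ into $\per_{\dg}(A)$ is never actually defined: for $X=L$ in \cref{exa:cyclic-family}, $R$ has cohomology unbounded below, so it is not perfect over any strictly positive dg algebra. Finally, under $\Phi_A$ the simple $B^{\mathrm{o}}$-modules are the images of the free modules $e_iA$, which correspond to the $L_i$. The simple module $S_A$ goes to $B^{\mathrm{o}}$, which vanishes in the singularity category.
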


The theorem has an equivalent Cohen--Macaulay interpretation. Let $B=A^!=\REnd_A(A^0)$ be the Koszul dual. The $d$-stability of $A$ is equivalent to $B$ being $(-d)$-self-injective, that is,
\[
\add\DD B=\add\Sigma^{d+1}B.
\]
In the locally finite setting considered here, this equality forces $B$ to be proper. Positive/non-positive Koszul duality and Jin's Cohen--Macaulay duality therefore give $\C\simeq\underline{\CM}(B)$. Together with Jin's converse construction \cite[Theorem~5.6]{Jin20}, this yields the characterization
\[
\C\text{ admits a $(-d)$-simple-minded system}
\quad\Longleftrightarrow\quad
\C\simeq\underline{\CM}(B)
\]
for some locally finite $(-d)$-self-injective non-positive dg algebra $B$; see \cref{cor:CM-characterization}.

The Cohen--Macaulay interpretation above is summarized by the diagram
\[
\begin{tikzcd}[column sep=large,row sep=large]
\pvd_{\dg}(A) \arrow[r,hook] \arrow[d,"\sim"',sloped] &
\per_{\dg}(A) \arrow[r] \arrow[d,"\sim"',sloped] &
\cosg_{\dg}(A) \arrow[d,"\sim",sloped] \\
\per_{\dg}(B^{\mathrm{o}})^{\mathrm{o}} \arrow[r,hook] &
\pvd_{\dg}(B^{\mathrm{o}})^{\mathrm{o}} \arrow[r] &
\sg_{\dg}(B^{\mathrm{o}})^{\mathrm{o}}.
\end{tikzcd}
\]
The two left vertical arrows are the Koszul-duality equivalences, and hence induce the right vertical equivalence on the quotients. For the realizing algebra, the upper-right term is $\C_{\dg}$, while Jin's Cohen--Macaulay description and duality identify the homotopy category of the lower-right term with $\underline{\CM}(B)$. Thus the diagram explains the identification $\C\simeq\underline{\CM}(B)$ associated with \cref{main:realization}. As a further consequence, properness of $B$ forces the positive model $A$ to be homologically smooth and $\pvd_{\dg}(A)$ to be proper; see \cref{prop:smooth-proper}.

The construction itself is controlled by a finite extension window. Writing $\Hcal_{\Lcal}=\Filt(\Lcal)$, the simple-minded-system axioms give
\[
\C=\Hcal_{\Lcal}*\Sigma^{-1}\Hcal_{\Lcal}*\cdots*\Sigma^{d+1}\Hcal_{\Lcal}.
\]
We extract from $\REnd_{\C_{\dg}}(L)$ a strictly positive dg algebra retaining exactly the cohomology relevant to this window, and show that the corresponding positive window is a fundamental domain for the cosingular quotient. Under Koszul duality, the $d$-stability of this model becomes the shifted self-injectivity of $B$.

For a finite acyclic quiver $Q$, let $S_Q$ be the direct sum of the simple right $kQ$-modules and put
\[
kQ^!:=\Ext^*_{kQ}(S_Q,S_Q),
\qquad
\C_d(kQ)=\D^b(kQ)/(\Sigma^{-d}\Serre),
\]
where $kQ^!$ is graded by extension degree and carries the Yoneda product. Since $kQ$ is hereditary, $kQ^!$ is concentrated in degrees zero and one, and its degree-one extension quiver is $Q$.
On the cluster-tilting side, Keller--Reiten recognition starts from the endomorphism algebra of the distinguished object. Since the degree-zero endomorphism algebra of a simple-minded system is always semisimple, the negative counterpart must instead use higher extensions; \cref{main:negative-KR} shows that a finite range suffices.

\begin{mainthm}[Negative Keller--Reiten recognition theorem; \cref{thm:negative-KR}]\label{main:negative-KR}
Let $d\leq-3$, and let $\C$ be a Hom-finite algebraic $d$-Calabi--Yau triangulated category with a $(-d)$-simple-minded system $\Lcal$, with $L=\bigoplus_{X\in\Lcal}X$. Let $Q$ be a finite acyclic quiver. If
\[
\bigoplus_{i=0}^{-d-1}\Hom_\C(L,\Sigma^iL)\simeq kQ^!
\]
as graded algebras, then
\[
\C\simeq\C_d(kQ).
\]
The same conclusion holds for $d=-2$ provided $\Hcal_\Lcal$ is hereditary; see \cref{rmk:d-minus-2}.
\end{mainthm}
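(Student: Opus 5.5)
The plan is to run \cref{main:realization} and then compare the resulting positive model with the one attached to $\C_d(kQ)$. Applying \cref{main:realization} to $(\C,\Lcal)$ gives a locally finite strictly positive, $d$-stable dg algebra $A$ with $A^0\simeq k^r$ and $\cosg_{\dg}(A)\simeq\C_{\dg}$. By construction, $A$ is extracted from $\REnd_{\C_{\dg}}(L)$ by keeping exactly the cohomology in the window $0\le i\le -d-1$. Hence
\[
H^*(A)\simeq\bigoplus_{i=0}^{-d-1}\Hom_\C(L,\Sigma^iL)\simeq kQ^!
\]
as graded algebras. On the model side, $\C_d(kQ)$ is Hom-finite, algebraic and $d$-Calabi--Yau, and the simples $S_Q$ (their images) form a $(-d)$-simple-minded system; this is the standard negative cluster category statement, and also follows from Jin's theorem applied to a suitable self-injective dg algebra. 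Its positive model $A_Q$ has the same window cohomology $kQ^!$. It therefore suffices to show that $A\simeq A_Q$ as dg algebras, or more weakly that their Koszul duals $B=A^!$ and $B_Q$ agree. We then conclude via $\C\simeq\cosg(A)\simeq\cosg(A_Q)\simeq\C_d(kQ)$.

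\textbf{Step 1: formality.} First show that $A$ is formal. Since $kQ^!$ is concentrated in degrees $0,1$, it is Koszul with quadratic dual $kQ$ (it is the Yoneda algebra of a hereditary algebra). Let $\{m_n\}$ be a minimal $A_\infty$-structure on $H^*(A)=kQ^!$. Then $m_n$ has degree $2-n$, so
\[
m_n\colon (kQ^!)^{\otimes n}\to kQ^!
\]
raises total internal degree by $2-n$. The inputs have total degree between $0$ and $n$, and $m_n$ is strictly unital over $A^0=k^r$. So for $n\ge3$ we may restrict to inputs of degree one, where the output has degree $n+2-n=2$. This vanishes because $(kQ^!)^2=0$. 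Hence $m_n=0$ for $n\ge3$, and $A\simeq kQ^!$ is formal. The same argument applies to $A_Q$, which gives $A\simeq A_Q\simeq kQ^!$.

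\textbf{Step 2: where the hypotheses enter.} The formality argument is suspiciously easy, so the real content must lie in the truncation used to build $A$. The cohomology of $\REnd(L)$ outside the window, including the Calabi--Yau-dual degrees near $-d$, does not enter $A$. Still, $d$-stability of $A$, i.e.\ $(-d)$-self-injectivity of $B=(kQ^!)^!$, must hold. Note that $(kQ^!)^!$ is $kQ$ in degree $0$ only if we ignore the additional structure that the window truncation imposes. So the point to check carefully is that the $A$ produced by \cref{main:realization} really has cohomology equal to the window, and is not modified by some "stabilization" term in top degree.

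I expect the main obstacle to be here. If $A$ carries extra cohomology in degree $-d$ (or $-d-1$) coming from the $d$-Calabi--Yau pairing, then $H^*(A)$ is a trivial extension of the window algebra, and formality needs degree bounds. In that case I would argue as follows. $H^*(A)$ is $kQ^!$ plus a piece in degree $\ge -d-1\ge2$, namely the Calabi--Yau dual $D(kQ^!)$ shifted. A Higher operation $m_n$ landing in that top piece from degree-one inputs has target degree $2$. This is below $-d-1$ when $d\le-3$, which is exactly where $d\le-3$ is used. Operations that involve a top-degree input land in degree $\ge -d+1-(n-2)+\ldots$, beyond the top. So the $A_\infty$-structure is again trivial, except possibly for a component that would be a cyclic cocycle. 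The $d$-Calabi--Yau hypothesis, via the chain-level correspondence of right Calabi--Yau structures mentioned in the introduction, pins this down to the canonical (Ginzburg-type) class, matching $A_Q$.

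For $d=-2$ the degrees collide: the top piece lies in degree $1$, and $m_3$ can be nonzero. The hereditary hypothesis on $\Hcal_\Lcal$ kills this component, as in \cref{rmk:d-minus-2}. Once $A\simeq A_Q$, the equivalence $\C\simeq\C_d(kQ)$ follows from the quasi-equivalence of cosingular quotients.
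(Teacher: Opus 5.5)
There is a genuine gap at the first step: the identification $H^*(A)\simeq kQ^!$ is false. The realizing algebra of \cref{lem:construct-A} is cut out of $R=\REnd_{\C_{\dg}}(L)$ only in low degrees: $A^{<0}=0$, $A^0=\ell$, $A^1$ is a complement of $d(R^0)$, and $A^i=R^i$ for $i\geq2$. Hence $H^i(A)\simeq\Hom_\C(L,\Sigma^iL)$ for \emph{all} $i\geq0$, and this is nonzero and unbounded above the window $0\le i\le -d-1$. Already for $Q=A_1$ the hypothesis holds in $\C_d(k)=\D^b(k)/\Sigma^{-d}$, yet $H^*(A)=k[u]$ with $|u|=-d$.

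Your conclusion actually refutes itself. $kQ^!$ is proper, and its Koszul dual is $kQ$ concentrated in degree $0$, which is finite-dimensional. So $S\in\per(kQ^!)$ and $\cosg(kQ^!)=0$. Equivalently, $kQ$ is not $(-d)$-self-injective, so $kQ^!$ is not $d$-stable. The chain ``$A\simeq A_Q\simeq kQ^!$'' would therefore give $\C=0$. The hypothesis fixes only finitely many degrees of $H^*(A)$, so formality of $kQ^!$ says nothing about $A$.

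Your Step 2 patch does not repair this. $H^*(A)$ is not the window algebra plus a single Calabi--Yau-dual piece. You also cannot invoke \cref{thm:normalized-CY}: the hypothesis is only a triangulated Serre-duality isomorphism, not a dg Calabi--Yau structure, and that theorem transports such structures rather than producing them. Your account of where $d\leq-3$ enters is also off. For $d=-2$ the window $\{0,1\}$ simply no longer forces $\Hom_\C(L,\Sigma^2L)=0$, i.e.\ heredity of $\Hcal_\Lcal$; no $A_\infty$ degree count is involved.

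Your fallback target, ``their Koszul duals agree'', is the right one, and it is what the paper proves. The content lies in computing $B=A^!$, which is finite, with cohomology in $[d+1,0]$. The paper proceeds as follows.
\begin{enumerate}
\item The window hypothesis gives $\Hom_\C(\Hcal_\Lcal,\Sigma^i\Hcal_\Lcal)=0$ for $2\le i\le -d-1$. This makes $\Hcal_\Lcal$ hereditary.
\item The same vanishing lets one peel the layers off the injective envelope, giving $I\in\Sigma^d\Hcal_\Lcal$.
\item Calabi--Yau duality then shows that $P=\Sigma^{-d}I$ is a basic projective generator, so $\Hcal_\Lcal\simeq\modu kQ$.
\item Lift $\Sigma^dP\simeq I$ to a triangle $\Sigma^d\widetilde P\to\widetilde I\to S_A$ in $\per(A)$ and apply $\Phi_A$. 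This gives $H^*(B)\simeq kQ\oplus\Sigma^{-d-1}\DD(kQ)$.
\item The Calabi--Yau pairing is used a second time (\cref{lem:CY-bimodule}) to show that the degree $d+1$ bimodule is $\DD(kQ)$ with no Nakayama twist. This is exactly what fails in \cref{exa:cyclic-family}, where the window hypothesis holds but the answer is $\C_{md}(k)$.
\item Intrinsic formality of $T=kQ\ltimes\Sigma^{-d-1}\DD(kQ)$ follows from $\operatorname{HH}^{\geq2}_\ell(kQ,\DD kQ)=0$ for the hereditary tensor algebra $kQ$, via Seidel--Thomas.
\item Jin's results give $\C\simeq\underline{\CM}(B)\simeq\sg(T)\simeq\C_d(kQ)$.
\end{enumerate}
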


Because $kQ^!$ is concentrated in degrees zero and one, the hypothesis has a particularly concrete form: the extension quiver of $\Hcal_{\Lcal}$ is $Q$ and
\[
\Hom_\C(L,\Sigma^iL)=0\qquad(2\leq i\leq-d-1).
\]
There is no further multiplicative condition in this range. The standard simple-minded system of $\C_d(kQ)$ satisfies these assumptions. Consequently, for $d\leq-3$,
\[
\C\simeq\C_d(kQ)
\quad\Longleftrightarrow\quad
\C\text{ admits such a $(-d)$-simple-minded system};
\]
see \cref{cor:negative-KR-characterization}. The proof of \cref{main:negative-KR} identifies the Koszul dual cohomology with
\[
H^*(B)\simeq kQ\ltimes\Sigma^{-d-1}\DD(kQ),
\]
and then uses intrinsic formality and Jin's singularity-category description to recover $\C_d(kQ)$. \Cref{exa:cyclic-family} shows that the $d$-stability in \cref{main:realization} is strictly weaker than requiring the cosingular quotient to be $d$-Calabi--Yau.

The second theme of the paper is Calabi--Yau structure at the dg level. A triangulated Calabi--Yau property records a Serre-duality isomorphism, whereas a dg Calabi--Yau structure is the stronger datum of a non-degenerate cyclic class lifting the corresponding bimodule duality. For the positive model $A$, two operations are relevant: localization from the finite-dimensional part to the cosingular quotient, and Koszul/Morita duality between the positive model and its finite-dimensional part.

Assume first that $A$ is $d$-stable, and put $\ell=A^0\simeq k^r$. Since $\HChom_*(\ell)$ contributes a semisimple periodic summand, we remove this contribution by taking the cones of the maps $M(\ell)\to M(-)$. This leads to $\ell$-normalized right Calabi--Yau structures on $\cosg_{\dg}(A)$.

\begin{mainthm}[Hochschild localization and normalized Calabi--Yau correspondence; \cref{thm:normalized-CY}]\label{main:normalized-CY}
Let $d\leq-2$, and let $A$ be a $d$-stable locally finite strictly positive dg algebra with $\ell=A^0\simeq k^r$. Then the Hochschild localization boundary is an isomorphism
\[
\partial_{\mathrm{HH}}:
\HHom_{-d}(\cosg_{\dg}(A))
\xrightarrow{\sim}
\HHom_{-d-1}(\pvd_{\dg}(A)),
\]
and its dual preserves and detects non-degenerate Hochschild classes. Moreover, the dual of the reduced cyclic localization boundary induces a bijection
\[
\left\{\begin{array}{c}\text{right $(d+1)$-Calabi--Yau structures}\\ \text{on $\pvd_{\dg}(A)$}\end{array}\right\}
\xrightarrow{\sim}
\left\{\begin{array}{c}\text{$\ell$-normalized right $d$-Calabi--Yau structures}\\ \text{on $\cosg_{\dg}(A)$}\end{array}\right\}.
\]
\end{mainthm}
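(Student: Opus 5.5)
The proof will run through the localization sequence of dg categories
\[
\pvd_{\dg}(A)\hookrightarrow\per_{\dg}(A)\to\cosg_{\dg}(A),
\]
and use Keller's localization theorem for Hochschild and cyclic homology. Keller's theorem produces long exact sequences
\[
\cdots\to\HHom_n(\pvd_{\dg}(A))\to\HHom_n(A)\to\HHom_n(\cosg_{\dg}(A))\xto{\partial_{\mathrm{HH}}}\HHom_{n-1}(\pvd_{\dg}(A))\to\cdots,
\]
together with the analogous sequences for $\HChom$ and for the mixed complexes. The Hochschild statement then reduces to understanding the middle term $\HHom_*(A)$. Since $A$ is strictly positive with $A^0=\ell$, the projection $A\to\ell$ is a filtered quasi-isomorphism on the weight-graded Hochschild complex in weight zero. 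I expect $\HHom_*(A)$ to split as $\HHom_*(\ell)\oplus\overline{\HHom}_*(A)$, where the reduced part is concentrated in homological degrees $\le -1$: positivity of $A$ forces cohomological degree at least the weight. Hence $\HHom_{-d}(A)$ and $\HHom_{-d-1}(A)$ vanish for $d\le -2$, apart from the $\ell$-part, which lives only in degree $0$. This gives the isomorphism $\partial_{\mathrm{HH}}$ whenever $-d-1\ge1$, which holds since $d\le-2$.

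To compare non-degeneracy, I will dualize $\partial_{\mathrm{HH}}$. A class $\xi\in\DD\HHom_{-d-1}(\pvd_{\dg}(A))$ corresponds to a bimodule map $\pvd_{\dg}(A)_\Delta\to\DD\Sigma^{-d-1}(\cdots)$. I will check that its image under $\DD\partial_{\mathrm{HH}}$ is the induced map on the Verdier quotient, obtained via the connecting morphism of the triangle of bimodules attached to the localization, namely the diagonal of $\per$ restricted to $\pvd$ versus the diagonal of $\cosg$.

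The heart of the argument is to show that non-degeneracy is preserved and detected. For this I will use the $d$-stability of $A$ together with its Koszul-dual reformulation, $\add\DD B=\add\Sigma^{d+1}B$. Via \cref{main:realization}, $\pvd(A)\simeq\per(B^{\mathrm o})^{\mathrm o}$ and $\cosg(A)\simeq\underline{\CM}(B)$, so the question becomes the classical passage from a $(d+1)$-CY structure on $\per(B)$ to a $d$-CY structure on the singularity category, in the style of Amiot's and Keller's shift-by-one phenomenon. Concretely, the Serre functor of $\pvd(A)$ is $\Sigma^{d+1}$ up to the twist fixed by $d$-stability, and it descends to $\Sigma^{d}$ on the quotient through the connecting morphism. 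This descent is exactly the statement that $\DD\partial_{\mathrm{HH}}(\xi)$ is non-degenerate if and only if $\xi$ is. The "detect" direction will use that $\pvd$ is generated by the simples $\ell$, which map to a generating simple-minded system in the quotient, so non-degeneracy can be tested on $\Lcal$.

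For the cyclic statement, Keller's localization theorem for mixed complexes gives the same long exact sequence for $\HChom$. Here, however, $\HChom_*(\ell)=\ell\otimes k[u]$ is periodic and does not vanish in the relevant degrees; this is why the reduced theory is needed. I will take cones of $M(\ell)\to M(-)$ on each term, so that the reduced $\overline{\HChom}_*(A)$ vanishes in degrees $-d$ and $-d-1$ by the same weight argument. The reduced boundary is then an isomorphism, and its dual is compatible with the Hochschild isomorphism under the map $\HChom\to\HHom$ (Connes' sequence naturality). A right CY structure is a class in $\DD\HChom$ whose image in $\DD\HHom$ is non-degenerate. Combining the reduced cyclic isomorphism with the Hochschild non-degeneracy equivalence therefore yields the bijection, with the $\ell$-normalization condition on the $\cosg$ side being exactly the liftability to the reduced complex.

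I expect the main obstacle to be the non-degeneracy transfer. It requires an explicit identification of the dual boundary map with the descent of bimodule duality maps, which means controlling the localization triangle at the level of bimodules over non-proper $\per_{\dg}(A)$.
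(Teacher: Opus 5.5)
Your treatment of the boundary isomorphisms and the cyclic bookkeeping follows the paper. Strict positivity kills $\HHom_m(\per_{\dg}(A))$ and the $\ell$-reduced cyclic homology for $m>0$, and the bijection comes from the square comparing the reduced cyclic boundary with the Hochschild one. Three small corrections:
\begin{enumerate}
\item The reduced Hochschild complex is only in homological degrees $\leq0$, not $\leq -1$: a chain $e[sa_1|\cdots|sa_p]$ with $e\in\ell$ and all $|a_j|=1$ has degree $0$. This is harmless, since only degrees $-d,-d-1\geq1$ matter.
\item The cyclic vanishing does not follow from the Hochschild bound alone. The cyclic direction raises homological degree, which is exactly why $\HChom_{2n}(\ell)\neq0$. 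You need the reduced $\lambda$-complex in characteristic zero, with $a_0\in\overline A$ as well.
\item Only the $\per$ and $\cosg$ terms are reduced, because $\ell\to\per_{\dg}(A)$ hits $A\notin\pvd(A)$.
\end{enumerate}

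The genuine gap is the non-degeneracy transfer, which is the substance of the theorem. Your detection plan rests on a false claim. The simples $S_A=\ell$ generating $\pvd(A)$ do not \emph{map to a generating simple-minded system in the quotient}: every object of $\pvd(A)$ becomes zero in $\cosg(A)=\per(A)/\pvd(A)$, and the simple-minded system of $\cosg(A)$ is the image of the $e_iA$. So invertibility of $g_x\colon\Sigma^d\Qdg\to\DD\Qdg$, tested on $\Lcal$ or anywhere in $\Qdg$, says nothing directly about $f_x\colon\Sigma^{d+1}\Pdg\to\DD\Pdg$. The information must be carried back through the non-proper middle term $\Tdg$.

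The paper does this with the adjoint morphisms $u_x\colon\Sigma^{d+1}K\to\DD\Tdg$ and $v_x\colon\Sigma^{d+1}\Tdg\to\DD K$, where $K=\Tdg\otimes^{\mathbf L}_{\Pdg}\Tdg$. These form a morphism of localization triangles with third component $\Sigma g_x$, so $\operatorname{Cone}(u_x)\simeq\operatorname{Cone}(v_x)$. On $P,Q\in\add A$, strict positivity and the bound $K(P,Q)\in\D^{\leq d+1}(k)$ put the two cones in $\D^{\leq0}(k)$ and $\D^{\geq-d-2}(k)$. This forces them to vanish for $d\leq-3$; for $d=-2$ an extra degree-zero pairing argument is needed. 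Then $u_x$ is invertible, and it restricts to $f_x$ on $\Pdg$.

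This is where $d$-stability really enters. The bound on $K$ comes from the fundamental-domain comparison $\Hom_{\D(A)}(P,\Sigma^iQ)\simeq\Hom_{\cosg(A)}(qP,\Sigma^iqQ)$ for $i\geq d+1$, using $\Hom_{\D(A)}(S_A,Q)=0$ at $i=0$. It does not come from an identification of Serre functors.

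For preservation, \emph{the Serre functor $\Sigma^{d+1}$ descends to $\Sigma^d$} is a heuristic about triangulated Serre functors, whereas the theorem concerns individual classes $x$. Moreover, $d$-stability only gives $\add\DD B=\add\Sigma^{d+1}B$. So the Serre functor of $\pvd(A)$ is in general $\Sigma^{d+1}$ twisted by a Nakayama permutation; in the cyclic example, $\cosg(A_m)$ is not even $d$-Calabi--Yau.

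A chain-level version of the shift-by-one phenomenon does exist: it is Hanihara--Liu's localization theorem, which the paper uses. But its hypotheses for the non-proper $\Tdg$ must be checked:
\begin{enumerate}
\item biduality $\Tdg\simeq\DD\DD\Tdg$, which follows from local finiteness;
\item $\Tdg\xrightarrow{\sim}\RHom_{\Pdg}(\Tdg,\Tdg)$, which the paper derives from local $\pvd(A)$-envelopes $Y\to\sigma_{<m}Y$ given by Keller--Nicol\'as weight truncations.
\end{enumerate}
You correctly name this bimodule control as the main obstacle, but the proposal supplies no mechanism for it. Also, in the paper that theorem yields only the preservation half; detection needs the separate amplitude argument above.
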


The main point is to show that the dual Hochschild boundary preserves and detects non-degeneracy; we prove this using local $\pvd(A)$-envelopes, the Keller--Nicol\'as weight structure, and the localization formalism of Hanihara--Liu \cite{KN13,HaniharaLiu26}. We also determine the relation with ordinary right Calabi--Yau structures: for odd $d$ every ordinary structure admits a normalized lift, while for even $d$ the normalized classes are exactly those annihilating the semisimple contribution; see \cref{prop:parity}.

The second operation is left--right duality. Brav--Dyckerhoff construct an evaluation morphism from the mixed complex of a smooth dg category to the dual mixed complex of a locally perfect Morita dual \cite{BravDyckerhoff19}; related Calabi--Yau dualities appear in \cite{KellerLiuAmiot24,HolsteinRivera24}. In the ordinary positive dg setting of this paper, properness of the Koszul dual is enough for this evaluation morphism to be a quasi-isomorphism of mixed complexes.

\begin{mainthm}[Mixed-complex duality and left--right Calabi--Yau symmetry; \cref{thm:left-right-CY}]\label{main:left-right-CY}
Let $A$ be a locally finite strictly positive dg algebra with $A^0\simeq k^r$ and proper Koszul dual $B=A^!$. Then the Brav--Dyckerhoff evaluation morphism
\[
M\bigl(\per_{\dg}(A)\bigr)\xrightarrow{\sim}\DD M\bigl(\pvd_{\dg}(A)\bigr)
\]
is a quasi-isomorphism of mixed complexes. Consequently, for every $n\in\mathbb Z$ it induces canonical isomorphisms
\[
\HHom_n\bigl(\per_{\dg}(A)\bigr)\xrightarrow{\sim}\DD\HHom_{-n}\bigl(\pvd_{\dg}(A)\bigr),
\qquad
\HN_n\bigl(\per_{\dg}(A)\bigr)\xrightarrow{\sim}\DD\HChom_{-n}\bigl(\pvd_{\dg}(A)\bigr).
\]
The second isomorphism identifies left $n$-Calabi--Yau structures on $\per_{\dg}(A)$ with right $n$-Calabi--Yau structures on $\pvd_{\dg}(A)$. If $A$ is moreover $d$-stable, then for $n=d+1$ \cref{main:normalized-CY} yields canonical bijections
\[
\begin{gathered}
\left\{\begin{array}{c}\text{left $(d+1)$-Calabi--Yau structures}\\ \text{on $\per_{\dg}(A)$}\end{array}\right\}
\xrightarrow{\sim}
\left\{\begin{array}{c}\text{right $(d+1)$-Calabi--Yau structures}\\ \text{on $\pvd_{\dg}(A)$}\end{array}\right\}\\[1mm]
\xrightarrow{\sim}
\left\{\begin{array}{c}\text{$\ell$-normalized right $d$-Calabi--Yau structures}\\ \text{on $\cosg_{\dg}(A)$}\end{array}\right\}.
\end{gathered}
\]
\end{mainthm}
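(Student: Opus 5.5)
Our plan is to build the Brav--Dyckerhoff evaluation morphism concretely and reduce everything to a statement about Hochschild complexes of one pair of algebras. First we replace $\per_{\dg}(A)$ by $A$ (Morita invariance of mixed complexes). Since $A$ is locally finite and strictly positive with $A^0\simeq\ell=k^r$, the simple module $\ell$ lies in $\pvd(A)$ and classically generates it; Koszul duality (\cite[Theorem~D]{Fus24}) gives $\pvd_{\dg}(A)\simeq\per_{\dg}(B^{\mathrm o})^{\mathrm o}$ with $B=\REnd_A(\ell)$. So $M(\pvd_{\dg}(A))\simeq M(B)$, using that the mixed complex of an opposite category is canonically isomorphic to that of the original. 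Properness of $B$ then makes $\DD M(B)$ a well-behaved dual mixed complex. In this model the evaluation morphism is the pairing $M(A)\otimes M(B)\to k$ induced by the $A$-$B$-bimodule $\ell$, that is, by the Hochschild character of the kernel $\ell$ evaluated through the trace on $\per(k)$.

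The key step is to show that this pairing is perfect on Hochschild complexes; compatibility with Connes' $B$-operator then comes for free from the construction of Brav--Dyckerhoff. We use homological smoothness of $A$ (\cref{prop:smooth-proper}, which needs only properness of $B$) to write $\HHom(A)\simeq A\otimes^{\mathbf L}_{A^e}A$. We then apply $\RHom_A(\ell,-)$ to both tensor factors. Since $A\simeq\RHom_{B^{\mathrm o}}(\ell,\ell)$ (double Koszul duality, valid in the locally finite setting) and $B$ is proper, we obtain
\[
A\otimes^{\mathbf L}_{A^e}A\simeq \DD\bigl(B\otimes^{\mathbf L}_{B^e}B\bigr),
\]
and we check that this identification agrees with the evaluation map. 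The concrete route is to filter both sides by the weight grading coming from the augmentation $A\to\ell$. Strict positivity makes the bar constructions $\mathrm{B}A$ and $\Omega$-type cobar models finite in each degree, so the reduced Hochschild complex of $A$ over $\ell$ is the $\ell$-cyclic coHochschild complex of the coalgebra $\DD B$. Degreewise this is literally the graded dual of the reduced Hochschild complex of $B$ over $\ell$, and the pairing is the evaluation.

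The main obstacle is convergence: the identification of sums with products. Local finiteness and strict positivity give degreewise finiteness of $\mathrm{B}A$. Properness of $B$ bounds the Hochschild complex of $B$, so that completed versus uncompleted tensor products agree, and the dual of a direct sum is the product we need. We would first try to run this via the weight filtration, where the associated graded map is an isomorphism of finite-dimensional pieces in each total degree, and then argue with complete, exhaustive filtrations.

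Once $M(A)\to\DD M(B)$ is a quasi-isomorphism of mixed complexes, taking $H_n$ gives the Hochschild isomorphism. For cyclic theory we apply $(-)^{hS^1}$: negative cyclic homology of $M(A)$ corresponds to homotopy $S^1$-invariants of $\DD M(B)$, which is $\DD$ of $S^1$-coinvariants, i.e. $\DD\HChom_{-n}$. This step uses boundedness coming from properness to commute $\DD$ with the limit. Non-degeneracy is preserved because the underlying Hochschild classes correspond and the induced bimodule maps $A^\vee[n]\to A$ and $\DD B\to B[-n]$ are Koszul dual to one another. Hence left $n$-CY structures on $\per_{\dg}(A)$ biject with right $n$-CY structures on $\pvd_{\dg}(A)$. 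Finally, for $d$-stable $A$ and $n=d+1$, we compose with the bijection of \cref{main:normalized-CY} to obtain the displayed chain of bijections.
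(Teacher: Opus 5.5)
Your proposal has a genuine gap at its central step, the proof that $\Theta$ is a quasi-isomorphism.

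\textbf{The finiteness claim is false.} Your convergence mechanism assumes that strict positivity and local finiteness make $\mathrm{B}A$ finite-dimensional in each degree. But $s\overline A$ has degree-zero part $sA^1$, so $(\mathrm{B}A)^0\supseteq\bigoplus_{p}(sA^1)^{\otimes_\ell p}$. For example, take $A=k\langle x\rangle$ with $|x|=1$ and $dx=0$. It satisfies every hypothesis: its Koszul dual is $k[y]/(y^2)$ with $|y|=0$, which is proper. Yet $NC^{\ell}(A)$ contains $1[sx|\cdots|sx]$ of every length in degree $0$. Three consequences follow:
\begin{itemize}
\item $\mathrm{B}A$ is not literally $\DD B$ as a coalgebra.
\item The Hochschild complex of $A$ is not degreewise the graded dual of that of $B$.
\item The augmentation-weight filtration on the $A$-side is exhaustive but not complete, since each degree is an infinite direct sum over weights, whereas $\DD C_{\mathrm{HH}}(B)$ is degreewise finite.
\end{itemize}
So the sums-versus-products problem you flag is exactly what your sketch leaves unresolved.

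\textbf{The compatibility with $\Theta$ is only asserted.} The claim that your identification ``agrees with the evaluation map'' is the substance of the theorem. Likewise, the claim that $\varphi_\xi$ and the right Calabi--Yau morphism are ``Koszul dual to one another'' is not argued. For a bijection you also need non-degeneracy to be reflected, not just preserved.

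\textbf{Smoothness is misplaced.} $\HHom(A)\simeq A\otimes^{\mathbf L}_{A^e}A$ holds for any $A$. Smoothness is what lets you rewrite it as $\RHom_{A^e}(A^{\vee},A)$ with $A,A^{\vee}\in\per(A^e)$. The input you then need is Koszul duality for $A^e$, not double Koszul duality for $A$.

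\textbf{The missing idea is to work at the bimodule level, which avoids convergence entirely.} Write $\Tdg=\per_{\dg}(A)$ and $\Pdg=\pvd_{\dg}(A)$. The square of \cite[Theorem~3.1]{BravDyckerhoff19} identifies $\Theta$ on Hochschild complexes with the map
\[
\RHom_{\Tdg^e}(\Tdg^{\vee},\Tdg)\to\RHom_{\Pdg^e}(\Pdg,\DD\Pdg)
\]
induced by a contravariant bimodule duality functor. The paper's argument then runs as follows.
\begin{enumerate}
\item Evaluate this functor at the split generator $(\ell,\ell)$ of $\Pdg^e$ and use $\RHom_k(\ell,\ell)\simeq\ell^e$. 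This identifies it with the enveloping Koszul functor $\Phi_e=\RHom_{A^e}(-,\ell^e)$.
\item Since $(A^e)^!\simeq B^e$ is proper, $A^e$ is pvd-finite, so $\Phi_e$ is fully faithful on $\per(A^e)$.
\item Smoothness puts $A$ and $A^{\vee}$ in $\per(A^e)$. Moreover $\Phi_e(A)\simeq B$ and $\Phi_e(A^{\vee})\simeq\DD B$, which correspond to $\Pdg$ and $\DD\Pdg$.
\item Hence the lower map is a quasi-isomorphism, so $\Theta$ is a quasi-isomorphism of mixed complexes.
\item The same full faithfulness shows that $\varphi_\xi\colon\Sigma^n\Tdg^{\vee}\to\Tdg$ is invertible if and only if the associated $\Sigma^n\Pdg\to\DD\Pdg$ is. Note that your $\DD B\to B[-n]$ has the orientation reversed.
\end{enumerate}

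Your remaining steps are fine. Compatibility with the mixed structure is automatic. The identification $\HN_n\simeq\DD\HChom_{-n}$ is just the adjunction $\RHom_\Lambda(k,\DD N)\simeq\DD(N\otimes^{\mathbf L}_\Lambda k)$ and needs no boundedness. The final chain of bijections follows by composing with \cref{thm:normalized-CY}.
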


Taken together, the four results are organized by two recurring operations: passage to the cosingular quotient and passage across Koszul duality. At the triangulated level they underlie the realization and recognition results, while at chain level they transport Calabi--Yau structures across $\pvd_{\dg}(A)$, $\per_{\dg}(A)$, and $\cosg_{\dg}(A)$.

\subsection*{Organization and conventions}

In \cref{sec:sms-positive}, we recall simple-minded systems and the Koszul duality used throughout the paper, while \cref{sec:cosingular-fundamental-domain} develops $d$-stable positive dg algebras and the cosingular fundamental domain. \Cref{sec:cosingular-realization,sec:acyclic-negative-CY} prove \cref{main:realization,main:negative-KR}, respectively. \Cref{sec:cy,sec:left-right-CY} treat Calabi--Yau structures and prove \cref{main:normalized-CY,main:left-right-CY}.

Throughout, $k$ is an algebraically closed field of characteristic zero. All triangulated categories are $k$-linear and Hom-finite unless otherwise stated, all dg categories are $k$-linear and cohomologically graded, and all modules are right modules. A superscript ${}^{\mathrm{o}}$ denotes the opposite algebra or category. We write $\DD=\Hom_k(-,k)$ for the $k$-linear dual. A dg algebra $A$ is called \defterm{locally finite} if $\dim_k H^i(A)<\infty$ for every $i\in\mathbb Z$, and \defterm{proper} if $\dim_k H^*(A)<\infty$. For a dg algebra $A$, we set
\[
\per(A)=\thick_{\D(A)}(A),
\qquad
\pvd(A)=\{X\in\D(A)\mid \dim_k H^*(X)<\infty\}.
\]
We call $A$ \defterm{pvd-finite} if $\pvd(A)$ is Hom-finite. A dg algebra $A$ is \defterm{homologically smooth} if the diagonal bimodule belongs to $\per(A^e)$; equivalently, $\per_{\dg}(A)$ is a smooth dg category. A small dg category is \defterm{proper} if each of its morphism complexes has finite-dimensional total cohomology. Whenever $\pvd(A)\subseteq\per(A)$, we write $\cosg(A)=\per(A)/\pvd(A)$ and $\cosg_{\dg}(A)=\per_{\dg}(A)/\pvd_{\dg}(A)$; whenever $\per(A)\subseteq\pvd(A)$, we write $\sg(A)=\pvd(A)/\per(A)$ and $\sg_{\dg}(A)=\pvd_{\dg}(A)/\per_{\dg}(A)$. For full subcategories $\Xcal,\Ycal$ of a triangulated category, $\Xcal*\Ycal$ denotes the ordinary extension product: its objects are the middle terms of triangles $X\to Z\to Y\to\Sigma X$ with $X\in\Xcal$ and $Y\in\Ycal$. The notation $\Filt(\Scal)$ denotes the union of the finite iterated extension products of $\Scal$; no Karoubi closure is built into $*$ or $\Filt$.

\par\medskip
\noindent
{\bf Acknowledgements.}
The author would like to express his sincere gratitude to his supervisor Akira Ishii for his continuous support and encouragement.
The author is grateful to Ryu Tomonaga for many valuable comments, suggestions, and helpful advice concerning this work.

\medskip
\noindent
{\bf Use of artificial intelligence.}
The author used ChatGPT (OpenAI) and Claude Opus (Anthropic) during the preparation of this manuscript for assistance with language editing, exposition, and manuscript-level consistency checks. The author takes full responsibility for all mathematical statements and arguments in the paper.

\section{Simple-minded systems and positive dg algebras}\label{sec:sms-positive}

\subsection{Simple-minded systems}

For a full subcategory $\Xcal$ and an integer $j\leq0$, set
\[
\F^0(\Xcal)=\Xcal,
\qquad
\F^j(\Xcal)=\Xcal*\Sigma^{-1}\Xcal*\cdots*\Sigma^j\Xcal\quad(j<0).
\]

\begin{dfn}\label{dfn:sms}
Let $d\leq-2$ and let $\C$ be a $k$-linear triangulated category. A finite collection $\Lcal=\{L_1,\dots,L_r\}$ is called an \defterm{$(-d)$-simple-minded system} if, with $L=\bigoplus_iL_i$ and $\Hcal_{\Lcal}=\Filt(\Lcal)$, the following conditions hold:
\begin{itemize}
	\item [(i)] $\Hom_{\C}(L_i,L_j)=\delta_{ij}k$ for all $i,j$;
	\item [(ii)] $\Hom_{\C}(L,\Sigma^iL)=0$ for every $d+1\leq i\leq-1$;
	\item [(iii)] $\C=\F^{d+1}(\Hcal_{\Lcal})$.
\end{itemize}
\end{dfn}

This is the extension-window form of the usual $w$-simple-minded-system convention with $w=-d$; compare \cite[Definition~5.2]{Jin20} and \cite{Dugas15,CoelhoPauksztello20}. By \cref{dfn:sms}(i) and \cite[Lemma~2.7]{Dugas15}, $\Hcal_{\Lcal}=\Filt(\Lcal)$ is closed under direct summands.

\begin{lem}\label{lem:sms-window-summands}
For every $d+1\leq j\leq0$, the extension window $\F^j(\Hcal_{\Lcal})$ is closed under direct summands and finite direct sums.
\end{lem}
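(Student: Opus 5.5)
The plan is a downward induction on $j$, from $j=0$ to $j=d+1$. At each step I write the window as a two-term extension product and use a standard summand-closure criterion for such products. Directly from the definition of $\F^j$ (associativity of $*$ in a triangulated category), for $d+1\leq j<0$ we have
\[
\F^j(\Hcal_{\Lcal})=\Hcal_{\Lcal}*\Sigma^{-1}\F^{j+1}(\Hcal_{\Lcal}).
\]
So it suffices to know that $\Hcal_{\Lcal}$ and $\Sigma^{-1}\F^{j+1}(\Hcal_{\Lcal})$ have the two closure properties, and that the pair is Hom-orthogonal in the right direction.

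\emph{Finite direct sums.} This part is formal. $\Hcal_{\Lcal}=\Filt(\Lcal)$ is closed under extensions, hence under finite direct sums, because split triangles $X\to X\oplus Y\to Y\xto{0}\Sigma X$ are triangles. If $\Xcal,\Ycal$ are closed under finite sums, then so is $\Xcal*\Ycal$, since the direct sum of two triangles is a triangle. Shifts preserve the property, so induction gives it for every $\F^j(\Hcal_{\Lcal})$.

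\emph{Direct summands.} The base case $j=0$ is the summand-closedness of $\Hcal_{\Lcal}$ recorded after \cref{dfn:sms} (via \cite[Lemma~2.7]{Dugas15}). For the inductive step I invoke the criterion of Iyama--Yoshino type: if $\Xcal,\Ycal$ are closed under direct summands and $\Hom_\C(\Xcal,\Ycal)=0$, then $\Xcal*\Ycal$ is closed under direct summands. I will reprove it in the form needed here, without assuming that idempotents split in $\C$. The argument runs as follows.
\begin{itemize}
	\item Let $Z=Z_1\oplus Z_2$ with a triangle $X\to Z\to Y\to\Sigma X$, where $X\in\Xcal$ and $Y\in\Ycal$.
	\item Because $\Hom(\Xcal,\Ycal)=0$, every endomorphism of $Z$ lifts uniquely to a morphism of this triangle. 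In particular the idempotent $e$ projecting onto $Z_1$ induces idempotents on $X$ and on $Y$.
	\item Those idempotents have images $X_1$ and $Y_1$, which lie in $\Xcal$ and $\Ycal$ by summand-closedness.
	\item Compare with the triangles obtained from $Z_1\to Z\to Z_1$, for instance by taking the triangle for $Z$ and adding the triangle $0\to Z_2\to Z_2$, or by an octahedral argument. This exhibits $Z_1$ as an extension of $Y_1$ by $X_1$.
\end{itemize}

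The orthogonality we need is $\Hom_\C\bigl(\Hcal_{\Lcal},\Sigma^{-1}\F^{j+1}(\Hcal_{\Lcal})\bigr)=0$. By d\'evissage along filtrations it reduces to $\Hom_\C(L,\Sigma^aL)=0$ for $j\leq a\leq-1$. Since $j\geq d+1$, this range lies inside $[d+1,-1]$, where the vanishing is exactly \cref{dfn:sms}(ii). The induction hypothesis says that $\F^{j+1}(\Hcal_{\Lcal})$, and hence its shift, is summand-closed. This completes the step.

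The main obstacle is the summand-splitting step inside the criterion, since $\C$ is not assumed idempotent complete. I have to make sure that the idempotents induced on $X$ and $Y$ actually split. The images $X_1,Y_1$ should be extracted from the already split decomposition $Z=Z_1\oplus Z_2$, not from $X$ and $Y$ abstractly. My first attempt is to build $X_1$ as the cocone of a suitable composite $Z_1\to Z\to Y\to Y$ and verify via the octahedral axiom that it is a summand of $X$, so that it lies in $\Hcal_{\Lcal}$ by \cite[Lemma~2.7]{Dugas15}. As a fallback I would use that $\C$ is Hom-finite and, after passing to the Karoubi envelope, Krull--Schmidt, and run the same argument there.
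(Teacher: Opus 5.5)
Your overall strategy is the paper's: downward induction on $j$, the Iyama--Yoshino criterion (if $\Xcal,\Ycal$ are summand-closed and $\Hom(\Xcal,\Ycal)=0$, then $\Xcal*\Ycal$ is summand-closed), and orthogonality from \cref{dfn:sms}(ii). You bracket the window as $\Hcal_{\Lcal}*\Sigma^{-1}\F^{j+1}(\Hcal_{\Lcal})$, while the paper uses $\F^{j+1}(\Hcal_{\Lcal})*\Sigma^{j}\Hcal_{\Lcal}$. This is harmless, since both need exactly the vanishing range $[j,-1]$. The problem is your reproof of the criterion. From $\Hom(X,Y)=0$ you only get \emph{existence} of a morphism of triangles $(a,e,b)$ extending $e\in\End(Z)$. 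Two choices of $a$ differ by a map factoring through $\Sigma^{-1}Y\to X$, and two choices of $b$ by a map factoring through $Y\to\Sigma X$. So uniqueness would also need $\Hom(\Xcal,\Sigma^{-1}\Ycal)=0$. In your bracketing this is $\Hom(\Hcal_{\Lcal},\Sigma^{a}\Hcal_{\Lcal})=0$ for $j-1\leq a\leq-2$. At the last step $j=d+1$, the value $a=d$ lies outside the range of (ii), and the vanishing fails in general; in the $d$-Calabi--Yau case, $\Hom(L,\Sigma^dL)\simeq\DD\End(L)$. Concretely, in $\C_d(k)$ one has $\Sigma^{d+1}L\simeq\Sigma L$, and the triangle $L\to0\to\Sigma^{d+1}L\xrightarrow{\sim}\Sigma L$ admits the endomorphisms $(\lambda\,\id,0,\lambda\,\id)$ for every $\lambda\in k$. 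So the endomorphisms you induce on $X$ and $Y$ need not be idempotent, and $X_1,Y_1$ are not defined. This is a failure of orthogonality, not of splitting, so ``running the same argument in the Karoubi envelope'' does not repair it.

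You can fix the step using Hom-finiteness. The algebra $E$ of endomorphisms of the triangle $X\to Z\to Y\to\Sigma X$ is finite-dimensional, and by the existence part it surjects onto $\End(Z)$. If $x\in E$ lifts $e$, its Fitting idempotent $\epsilon\in k[x]$ (with $x$ invertible on $\epsilon$ and $x(1-\epsilon)$ nilpotent) still maps to $e$. It therefore gives compatible idempotents on $X$ and $Y$. Split these in $\C^{\pi}$ and use that a direct summand of a distinguished triangle is distinguished; this exhibits $Z_1$ as an extension of a summand of $Y$ by a summand of $X$. To put those summands in $\Xcal$ and $\Ycal$ you need summand-closure in $\C^{\pi}$. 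So run the whole induction in $\C^{\pi}$, where (i) and (ii) still hold. Then use the observation of \cref{rmk:idempotent-complete} that extension windows of objects of $\C$ computed in $\C^{\pi}$ stay in $\C$, so summand-closure in $\C^{\pi}$ restricts to $\C$. Alternatively, $\C^{\pi}$ is Hom-finite and idempotent complete, hence Krull--Schmidt, so you can simply cite \cite[Proposition~2.1(1)]{IyamaYoshino08} there, as the paper does; that proof uses minimal right $\Xcal$-approximations rather than unique lifts. Your octahedral ``first attempt'' then becomes unnecessary.
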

\begin{proof}
We argue by downward induction on $j$. The case $j=0$ is the summand-closure of $\Hcal_{\Lcal}$ recalled above. Suppose $j<0$ and the assertion is known for $j+1$. By \cref{dfn:sms}(ii) and finite d\'evissage,
\[
\Hom_\C\bigl(\F^{j+1}(\Hcal_{\Lcal}),\Sigma^j\Hcal_{\Lcal}\bigr)=0.
\]
We use \cite[Proposition~2.1(1)]{IyamaYoshino08} in the following form: if two full subcategories are closed under direct summands and $\Hom(\Xcal,\Ycal)=0$, then $\Xcal*\Ycal$ is closed under direct summands. The induction hypothesis and the summand-closure of $\Sigma^j\Hcal_{\Lcal}$ therefore show that $\F^j(\Hcal_{\Lcal})$ is summand-closed. Closure under finite direct sums is immediate from the additivity of the factors and finite direct sums of triangles.
\end{proof}

\begin{rmk}[Idempotent completeness]\label{rmk:idempotent-complete}
The proof of \cref{lem:sms-window-summands} uses only \cref{dfn:sms}(i),(ii). Hence it also applies to a pre-$(-d)$-simple-minded system, meaning a collection satisfying these two conditions. Let $\C^\pi$ be the idempotent completion of $\C$. The collection $\Lcal$ remains a pre-$(-d)$-simple-minded system in $\C^\pi$, and extension products of objects of $\C$ computed in $\C^\pi$ still belong to $\C$: indeed, the connecting morphism lies in $\C$, and its cone can be taken in $\C$. Hence, if $\Lcal$ is a $(-d)$-simple-minded system in $\C$, then $\F^{d+1}(\Hcal_{\Lcal})=\C$ is closed under direct summands in $\C^\pi$ by \cref{lem:sms-window-summands}. Every object of $\C^\pi$ is a direct summand of an object of $\C$, so $\C^\pi=\C$. Thus a triangulated category admitting a $(-d)$-simple-minded system is automatically idempotent complete.
\end{rmk}

\begin{lem}\label{lem:injectives}
The category $\Hcal_{\Lcal}$ is a length abelian category with enough injectives. Let $L\mono I$ be the minimal injective envelope of $L$. Then
\[
I\in\Hcal_{\Lcal}\cap\bigl(\Sigma^{-2}\Hcal_{\Lcal}*\Sigma^{-3}\Hcal_{\Lcal}*\cdots*\Sigma^{d}\Hcal_{\Lcal}\bigr)
\]
and
\[
\Hom_{\C}(L,I)\simeq k^r
\]
as a right $\End_{\C}(L)\simeq k^r$-module.
\end{lem}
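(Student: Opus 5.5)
The plan has three stages: show that $\Hcal_{\Lcal}$ is a length abelian category, construct an injective envelope of $L$ in it, and locate that envelope in the extension window.

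\emph{Abelian structure.} By \cref{dfn:sms}(ii) with $i=-1$ we have $\Hom_\C(\Hcal_{\Lcal},\Sigma^{-1}\Hcal_{\Lcal})=0$ (using $d\le-2$ and d\'evissage). Dugas's results \cite{Dugas15} on simple-minded systems then show that $\Filt(\Lcal)$ is a length abelian category whose simple objects are the $L_i$ and whose short exact sequences are the triangles with all three terms in $\Hcal_{\Lcal}$. In particular $\Ext^1_{\Hcal_{\Lcal}}(X,Y)=\Hom_\C(X,\Sigma Y)$.

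\emph{Enough injectives.} Hom-finiteness gives $\dim_k\Hom_\C(X,\Sigma L)<\infty$ for all $X$. I would build $I_i$ as the union of an increasing chain of universal coextensions: start from $L_i$ and repeatedly take the universal extension by $\Ext^1(L,-)\otimes L$. The chain should stabilise because objects of $\Hcal_{\Lcal}$ have bounded Loewy length. To justify this boundedness, use (iii) and Hom-finiteness: $\Hom_\C(L,-)$ is finite-dimensional on the image of the maps into the window. More concretely, an injective envelope $I_i$ exists precisely when the functor $\Hom_\C(-,L_i)$ restricted to $\Hcal_{\Lcal}$ has a dual representing object; equivalently, $\DD\Hom_\C(L_i,-)$ is representable on $\Hcal_{\Lcal}$. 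To prove representability, I would use a Serre-functor-type argument in $\C$. Since $\C$ is Hom-finite, algebraic and equal to the finite window $\F^{d+1}(\Hcal_{\Lcal})$, the functor $\DD\Hom_\C(L,-)$ is cohomological and of finite type, so by Brown representability in a Hom-finite idempotent-complete category (\cref{rmk:idempotent-complete}) it is represented by some object $\Serre L$. I would then truncate $\Serre L$ with respect to the ``t-structure-like'' filtration of the window to extract its $\Hcal_{\Lcal}$-component $I$; this component represents $\DD\Hom(L,-)$ on $\Hcal_{\Lcal}$. Exactness of $\DD\Hom(L,-)$ on short exact sequences in $\Hcal_{\Lcal}$ shows that $I$ is injective, and $\Hom(L,I)\simeq\DD\End(L)\simeq k^r$ gives the module statement together with simplicity of the socle, hence minimality.

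\emph{Membership in the window.} For each $j$ with $d+1\le j\le-1$, take a triangle $X\to I\to Y$ with $X\in\F^{j+1}$-type pieces, and write $I$ as an iterated extension of layers $\Sigma^{m}\Hcal_{\Lcal}$. I then need to show that the layers $m=0,-1$ vanish, which places $I$ in $\Sigma^{-2}\Hcal_{\Lcal}*\cdots*\Sigma^{d}\Hcal_{\Lcal}$. Injectivity says $\Ext^1_{\Hcal}(-,I)=0$, that is, $\Hom(\Hcal_{\Lcal},\Sigma I)=0$. Together with $\Hom(\Hcal_{\Lcal},I)$ being concentrated at the socle, this kills the top layers after shifting by $\Sigma$. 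The shift bookkeeping here (the window for $\Sigma I$ versus the window for $I$) is where I expect the main difficulty, and I would check it carefully using the vanishing from \cref{dfn:sms}(ii) and \cref{lem:sms-window-summands} to split off summands.
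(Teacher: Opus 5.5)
Your first stage is fine (the paper gets it from Enomoto's theorem on semibricks). The genuine gap is in the existence of injectives, which is the substance of the lemma, and your representability route fails at two independent points.

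First, the criterion itself is wrong. Suppose some $I\in\Hcal_{\Lcal}$ satisfied $\Hom_\C(X,I)\simeq\DD\Hom_\C(L_i,X)$ naturally in $X\in\Hcal_{\Lcal}$. For each short exact sequence $0\to X'\to X\to X''\to0$ in $\Hcal_{\Lcal}$, the map $\Hom_\C(X'',I)\to\Hom_\C(X,I)$ is injective. This would force $\Hom_\C(L_i,X)\to\Hom_\C(L_i,X'')$ to be surjective, i.e.\ $L_i$ would be projective in $\Hcal_{\Lcal}$. That fails for some $i$ as soon as $\Hcal_{\Lcal}$ is not semisimple, e.g.\ $\modu kQ$ with $Q$ having an arrow. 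An injective envelope of $L_i$ represents $\DD\Hom(P_i,-)$ for a projective cover $P_i$, not $\DD\Hom(L_i,-)$. Your claimed exactness of $\DD\Hom_\C(L,-)$ on $\Hcal_{\Lcal}$ is false for the same reason.

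Second, there is no Brown representability theorem for Hom-finite idempotent-complete algebraic triangulated categories. In $\D^b(\modu k[x]/(x^2))$ the functor $\DD\Hom(S,-)$ is not representable: $\Hom(S,\Sigma^nS)\neq0$ for all $n\geq0$, whereas $\Hom(\Sigma^nS,Y)=0$ for $n\gg0$ and any fixed $Y$. The Bondal--Van den Bergh/Rouquier criterion needs a strong generator and Ext-finiteness. You establish neither, and Ext-finiteness fails in basic examples (on $\C_{-3}(k)$ one has $\Sigma^3\simeq\id$).

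Even granting $\Serre L$, the window is not a t-structure, so its ``$\Hcal_{\Lcal}$-component'' is undefined. A triangle $H\to\Serre L\to W\to\Sigma H$ from \cref{dfn:sms}(iii) only makes $\Hom_\C(X,H)\to\Hom_\C(X,\Serre L)$ surjective, because $\Hom_\C(\Hcal_{\Lcal},\Sigma^{-1}W)$ need not vanish. The Loewy-length sketch does not help either. Hom-finiteness alone produces no injectives: finite-length $k[[x]]$-modules form a Hom-finite length category with no nonzero injective object. So \cref{dfn:sms}(iii) must enter essentially, and you do not say how.

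The missing idea is to apply \cref{dfn:sms}(iii) to $\Sigma L$ itself. Take a triangle $H\to\Sigma L\to W\to\Sigma H$ with $H\in\Hcal_{\Lcal}$ and $W\in\Sigma^{-1}\Hcal_{\Lcal}*\cdots*\Sigma^{d+1}\Hcal_{\Lcal}$. It rotates to $L\to J\to H\to\Sigma L$ with $J=\Sigma^{-1}W\in\Sigma^{-2}\Hcal_{\Lcal}*\cdots*\Sigma^{d}\Hcal_{\Lcal}$.
\begin{itemize}
\item Extension-closure gives $J\in\Hcal_{\Lcal}$.
\item \Cref{dfn:sms}(ii) gives $\Hom_\C(\Hcal_{\Lcal},\Sigma J)=0$, so $J$ is an injective object of $\Hcal_{\Lcal}$ containing $L$. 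This yields enough injectives.
\item The envelope $I$ is a direct summand of $J$, so \cref{lem:sms-window-summands} puts it in the window.
\item $\Hom_\C(L,I)\simeq k^r$ just says that the socle of $I$ is $L$.
\end{itemize}

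Your membership stage can be repaired independently along the same lines, but with different bookkeeping. Decompose $\Sigma I$, not $I$, by \cref{dfn:sms}(iii). The map $H\to\Sigma I$ vanishes by injectivity, so $\Sigma I$ is a direct summand of $W$, and \cref{lem:sms-window-summands} concludes. Only one layer is removed. Discarding the layers $m=0,-1$ from a decomposition of $I$ would put $I$ in $\Sigma^{-2}\Hcal_{\Lcal}*\cdots*\Sigma^{d+1}\Hcal_{\Lcal}$. That is false already for $\C_{-3}(k)$, where $I=L$.
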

\begin{proof}
By \cref{dfn:sms}(ii) and induction on extension lengths, we have $\Hom_\C(X,\Sigma^{-1}Y)=0$ for every $X,Y\in\Hcal_\Lcal$, and hence $\Hcal_\Lcal$ inherits an exact structure from $\C$. Since $\Lcal$ is a semibrick and $\Hcal_{\Lcal}=\Filt(\Lcal)$, Enomoto's theorem \cite{Enomoto21} shows that this exact category is a length abelian category with simple objects $L_1,\dots,L_r$.

Apply \cref{dfn:sms}(iii) to $\Sigma L$. After rotating and shifting the resulting triangle, we obtain
\[
L\longrightarrow J\longrightarrow H\longrightarrow\Sigma L
\]
with $H\in\Hcal_\Lcal$ and
\[
J\in\Sigma^{-2}\Hcal_{\Lcal}*\Sigma^{-3}\Hcal_{\Lcal}*\cdots*\Sigma^{d}\Hcal_{\Lcal}.
\]
Since $L,H\in\Hcal_\Lcal$, also $J\in\Hcal_\Lcal$. Moreover, \cref{dfn:sms}(ii), together with induction on extension factors, gives $\Hom_\C(\Hcal_\Lcal,\Sigma J)=0$, so $J$ is injective in $\Hcal_\Lcal$. The triangle above is a short exact sequence in the abelian category $\Hcal_\Lcal$, so $L\to J$ is a monomorphism. Since every simple object $L_i$ embeds into the injective object $J$, induction on length shows that every object of $\Hcal_\Lcal$ embeds into a finite direct sum of copies of $J$. Hence $\Hcal_\Lcal$ has enough injectives.

Let $L\mono I$ be the minimal injective envelope of $L$. Since $J$ is injective, the monomorphism $L\to J$ extends along $L\mono I$ to a morphism $I\to J$. Its kernel has zero intersection with the essential subobject $L\subseteq I$, hence it is zero; thus $I\to J$ is a monomorphism. Since $I$ is injective, it splits, so $I$ is a direct summand of $J$. By \cref{lem:sms-window-summands}, the window containing $J$ is closed under direct summands, and therefore $I$ belongs to the same window. Finally, the socle of $I$ is $L$, and therefore $\Hom_\C(L,I)\simeq k^r$ as a right $\End_\C(L)\simeq k^r$-module.
\end{proof}

\begin{rmk}
Let $\Lcal$ be the standard $(-d)$-simple-minded system of the negative cluster category $\C_d(kQ)=\D^b(kQ)/(\Sigma^{-d}\Serre)$ of an acyclic quiver $Q$. Then
\[
\add I=\Hcal_\Lcal\cap\Sigma^d\Hcal_\Lcal.
\]
Indeed, under the standard identification $\Hcal_\Lcal\simeq\modu kQ$, the injective objects are $\Serre P$ with $P\in\operatorname{proj}kQ$. Since $\Sigma^{-d}\Serre$ becomes isomorphic to the identity in the orbit category, we have $\Serre P\simeq\Sigma^dP$ in $\C_d(kQ)$. Hence $I\in\Sigma^d\Hcal_\Lcal$. Conversely, if $X=\Sigma^dY\in\Hcal_\Lcal\cap\Sigma^d\Hcal_\Lcal$, then $\Hom_\C(\Hcal_\Lcal,\Sigma X)=\Hom_\C(\Hcal_\Lcal,\Sigma^{d+1}Y)=0$, so $X$ is injective. The case $Q=A_3$ and $d=-3$ is illustrated in \Cref{fig:A3-negative-cluster}.
\end{rmk}

\begin{figure}[htbp]
\centering
\begin{tikzpicture}[
	x=.66cm,y=.66cm,
	ar/.style={-{Stealth[length=3pt,width=3pt]},draw=black!43,line width=.4pt,shorten <=4pt,shorten >=4pt},
	vertex/.style={circle,draw=black,fill=white,inner sep=0pt,minimum size=4.8pt,line width=.5pt},
	simple/.style={vertex,draw=blue!75!black,fill=blue!75!black},
	cut/.style={black!65,dash pattern=on 2.2pt off 2.2pt,line width=.65pt},
	every node/.style={font=\scriptsize}
]
	\pgfmathsetmacro{\h}{sqrt(3)}
	\def\m{.42}

	\begin{scope}
		\clip (-2.2,-.95) rectangle (17,{2*\h+1});

		\foreach \r in {0,1,2}{
			\foreach \j in {-1,...,7}{
				\pgfmathtruncatemacro{\u}{\r+2*\j}
				\coordinate (v\r-\j) at (\u,{\r*\h});
			}
		}

		\foreach \r in {0,1}{
			\pgfmathtruncatemacro{\s}{\r+1}
			\foreach \j in {-1,...,7}{
				\draw[ar] (v\r-\j) -- (v\s-\j);
			}
			\foreach \j in {-1,...,6}{
				\pgfmathtruncatemacro{\k}{\j+1}
				\draw[ar] (v\s-\j) -- (v\r-\k);
			}
		}

		\draw[black,line width=.85pt,rounded corners=5pt]
			({-sqrt(3)*\m},-\m)
			-- (2,{2*\h+2*\m})
			-- ({4+sqrt(3)*\m},-\m)
			-- cycle;

		\draw[black,double=white,double distance=1.4pt,line width=.5pt,rounded corners=5pt]
			({2-sqrt(3)*\m},{2*\h+\m})
			-- ({6+sqrt(3)*\m},{2*\h+\m})
			-- (4,{-2*\m})
			-- cycle;

		\foreach \r in {0,1,2}{
			\foreach \j in {-1,7}{
				\node[vertex,draw=black!25] at (v\r-\j) {};
			}
			\foreach \j in {0,...,6}{
				\node[vertex] at (v\r-\j) {};
			}
		}

		\foreach \j in {0,1,2}{
			\node[simple] at (v0-\j) {};
		}
	\end{scope}

	\foreach \u in {-1,13}{
		\draw[cut]
			({\u-.3},{-.3*\h})
			-- ({\u+2.3},{2.3*\h});
	}

	\node[below] at (0,-.95) {$S_1$};
	\node[below] at (2,-.95) {$S_2$};
	\node[below] at (4,-.95) {$S_3$};

	\node[below] at (2,-1.65) {$\mathcal H$};
	\node[above] at (5,{2*\h+1}) {$\Sigma^{-3}\mathcal H$};

\end{tikzpicture}
\caption{The AR quiver of $\C_d(kQ)$ for $Q=(1\to2\to3)$ and $d=-3$. The blue vertices are the standard simple-minded system $S_1,S_2,S_3$; the solid and double boundaries indicate the hearts $\mathcal H$ and $\Sigma^{-3}\mathcal H$, respectively.}
\label{fig:A3-negative-cluster}
\end{figure}

\subsection{Positive dg algebras}

A dg algebra $A$ is called \defterm{strictly positive} if $A^i=0$ for $i<0$, $d(A^0)=0$, and $A^0$ is semisimple. We shall only use strictly positive models with $A^0\simeq k^r$. Put $S_A=A^0$, viewed as a dg $A$-module through the canonical augmentation $A\to A^0$, and put $\Hcal_A=\Filt(\add A)$.

For a locally finite non-positive dg algebra $B$, let $S_B$ be the direct sum of the simple $H^0(B)$-modules, viewed in the standard heart of $\D(B)$.

For a locally finite positive dg algebra, \cite[Proposition~5.6]{KN13} gives $\pvd(A)=\thick(S_A)$, while \cite[Lemma~6.2]{KN13} gives $\Hom_{\D(A)}(S_A,\Sigma^iS_A)=0$ for $i>0$. Thus $S_A$ is a silting object of $\pvd(A)$. Consequently, whenever $S_A\in\per(A)$,
\begin{equation}\label{eq:pvd-thick}
\pvd(A)=\thick(S_A)\subseteq\per(A).
\end{equation}

\begin{thm}[{\cite[Theorem~D(2)]{Fus24}}]\label{thm:Koszul-dual}
Let $A$ be a pvd-finite locally finite strictly positive dg algebra with $A^0\simeq k^r$, and let
\[
B:=A^!:=\REnd_A(S_A)
\]
be its Koszul dual. Then $B$ is a locally finite non-positive dg algebra, and the Koszul dual functor $\Phi_A=\RHom_A(-,S_A)$ induces equivalences
\[
\pvd(A)\xrightarrow{\sim}\per(B^{\mathrm{o}})^{\mathrm{o}},
\qquad
\per(A)\xrightarrow{\sim}\pvd(B^{\mathrm{o}})^{\mathrm{o}},
\]
which send $S_A$ to $B^{\mathrm{o}}$ and $A$ to $S_{B^{\mathrm{o}}}$, respectively.
\end{thm}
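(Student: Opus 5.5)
We would prove \cref{thm:Koszul-dual} by the standard Koszul-duality dévissage, using the silting object $S_A$ on one side and the generator $A$ on the other.

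\textbf{Step 1: structure of $B$.} Set $B=\REnd_A(S_A)$. By \cite[Lemma~6.2]{KN13} we have $\Hom_{\D(A)}(S_A,\Sigma^iS_A)=0$ for $i>0$, so $H^i(B)=0$ for $i>0$ and $B$ may be replaced by a non-positive model. Since $\pvd(A)$ is Hom-finite (pvd-finiteness), each $H^i(B)=\Hom(S_A,\Sigma^iS_A)$ is finite-dimensional, so $B$ is locally finite. Moreover $H^0(B)=\End(S_A)=\End_{A^0}(A^0)\simeq k^r$, because strict positivity lets Hom out of $S_A$ in degree zero be computed via the augmentation.

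\textbf{Step 2: first equivalence.} The functor $\Phi_A=\RHom_A(-,S_A)\colon\D(A)^{\mathrm o}\to\D(B^{\mathrm o})$ sends $S_A$ to $B^{\mathrm o}$. It is fully faithful on $S_A$ by the definition of $B$, so by dévissage it is fully faithful on $\thick(S_A)$. Since $\thick(S_A)=\pvd(A)$ by \cite[Proposition~5.6]{KN13}, $\Phi_A$ induces an equivalence $\pvd(A)\simeq\thick(B^{\mathrm o})^{\mathrm o}=\per(B^{\mathrm o})^{\mathrm o}$. Here we use that the image is a thick subcategory, which follows because $\pvd(A)$ is idempotent complete.

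\textbf{Step 3: second equivalence.} First, $\Phi_A(A)=\RHom_A(A,S_A)=S_A$, viewed as a $B^{\mathrm o}$-module. It has cohomology $k^r$ in degree $0$ and is the semisimple $H^0(B^{\mathrm o})$-module, so $\Phi_A(A)=S_{B^{\mathrm o}}$ lies in $\pvd(B^{\mathrm o})$. For full faithfulness on $\per(A)$, it suffices to check that
\[
\Hom(\Sigma^iA,A)\to\Hom(\Phi_AA,\Phi_A\Sigma^iA)
\]
is bijective. Writing $\Phi_A$ as $\RHom_B(-,-)$ composed with the double-dual map, this reduces to showing that the biduality morphism $A\to\REnd_{B^{\mathrm o}}(S_{B^{\mathrm o}})$ is a quasi-isomorphism. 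Essential surjectivity then follows because $\pvd(B^{\mathrm o})=\thick(S_{B^{\mathrm o}})$ for a locally finite non-positive dg algebra, using the standard t-structure with length heart $\modu H^0(B^{\mathrm o})$.

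\textbf{Main obstacle.} The hard step is the double Koszul duality $A\simeq B^{!}$. We would first reduce to a minimal $A_\infty$-model of $A$ with $H^0=k^r$ and strictly positive higher degrees. We would then filter $A$ by its degree truncations $\sigma^{\ge n}A$. Each graded piece is a finite sum of shifted copies of $S_A$, by local finiteness, and these pieces lie in $\pvd(A)$, where Step 2 already gives full faithfulness. Because $A$ is concentrated in degrees $\ge 0$ and $S_A$ is bounded, the relevant Hom spaces stabilize in each fixed degree. This stabilization is a Milnor $\lim$ argument; it requires vanishing of $\lim^1$, which follows from the finite-dimensionality of the graded pieces (Mittag-Leffler).

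If this convergence argument becomes delicate, we would instead use the adjoint pair given by $\Phi_A$ and $\RHom_{B^{\mathrm o}}(-,S_{B^{\mathrm o}})$. Its unit is an isomorphism on $S_A$, hence on $\pvd(A)$. We would then show that $A$ is a homotopy limit of objects of $\pvd(A)$ which is preserved by both functors, again using degreewise finiteness.
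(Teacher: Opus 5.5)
\textbf{Overview.} The paper does not prove this statement; it imports it from \cite[Theorem~D(2)]{Fus24}, so there is no internal argument to compare against. Your architecture is the standard one and works: d\'evissage from $S_A$ for the first equivalence, reduction of the second to the unit of the adjunction, and $\pvd(B^{\mathrm o})=\thick(S_{B^{\mathrm o}})$ for essential surjectivity. Two steps need repair.

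\textbf{$H^0(B)$ is not $k^r$.} The claim in Step~1 that $H^0(B)=\End_{\D(A)}(S_A)\simeq k^r$ is false. Strict positivity computes maps \emph{into} $S_A$ from $A$, namely $\Hom_{\D(A)}(A,S_A)=H^0(S_A)$. Maps \emph{out of} $S_A$ require a resolution of $S_A$, and that resolution has degree-zero cells coming from $H^1(A)$. For instance, take $A=k\langle x\rangle$ free on one generator of degree $1$ with zero differential. The triangle $\Sigma^{-1}A\xrightarrow{x\cdot}A\xrightarrow{\varepsilon}S_A\xrightarrow{\partial}A$ gives an exact sequence $0\to\Hom(A,S_A)\xrightarrow{\partial^*}\End_{\D(A)}(S_A)\xrightarrow{\varepsilon^*}\Hom(A,S_A)\to0$. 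Hence $H^0(B)\simeq k[y]/(y^2)$ with $y=\varepsilon\partial$, even though $A$ is pvd-finite with $r=1$. The paper itself relies on $H^0(B)$ being non-semisimple: it uses $\Phi_A(\Hcal_A)=(\modu H^0(B^{\mathrm o}))^{\mathrm o}$, and $H^0(B)\simeq kQ$ in \cref{lem:KR-koszul-model}.

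Consequently, your reason in Step~3 for $\Phi_A(A)\simeq S_{B^{\mathrm o}}$ (cohomology $k^r$ in degree $0$) is insufficient, although the conclusion holds. Here is the fix.
\begin{itemize}
\item An element $f\in H^0(B)$ acts on $H^0\Phi_A(A)=H^0(S_A)$ by the $\ell$-linear map $H^0(f)$. So the action factors through an algebra surjection $H^0(B)\to\End_\ell(\ell)\simeq k^r$.
\item The category $\pvd(A)$ is Krull--Schmidt, and $S_A$ has $r$ pairwise non-isomorphic indecomposable summands in it. Hence $H^0(B)$ is basic with $r$ simples, and the kernel of the surjection is its radical.
\item Therefore $H^0\Phi_A(A)$ is the direct sum of the simple modules.
\end{itemize}

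\textbf{The double-duality step.} Your first sketch does not close as written. Step~2 controls $\Hom(Y,Y')$ only when both objects lie in $\pvd(A)$, and the source $A$ in $\Hom_{\D(A)}(A,\Sigma^iA)$ cannot be traded for the tower. The fallback through the unit $A\to\Psi\Phi_A(A)$, with $\Psi=\RHom_{B^{\mathrm o}}(-,S_A)$, is the right route. However, its key input $\Phi_A(A)\simeq\operatorname{hocolim}_n\Phi_A(\sigma_{<n}A)$ does not follow from degreewise finiteness. The functor $\RHom_A(-,S_A)$ turns hocolims into holims, not holims into hocolims.

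What makes the key input true is positivity. Applying $\Phi_A$ to $\sigma_{\geq n}A\to A\to\sigma_{<n}A$ shows that the cofibre of $\Phi_A(\sigma_{<n}A)\to\Phi_A(A)$ is $\Phi_A(\sigma_{\geq n}A)$. This cofibre lies in degrees $\leq -n$, so the cofibres have zero hocolim. The degree bound follows from $\RHom_A(M,S_A)\simeq\RHom_\ell(M\otimes^{\mathbf L}_A\ell,\ell)$. On a minimal model, the bar complex $M\otimes_\ell T^c(s\overline A)$ lies in degrees $\geq n$ whenever $M$ does, because $s\overline A$ lies in degrees $\geq0$.

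Two smaller corrections to where you place the weight of the argument:
\begin{itemize}
\item Finite-dimensionality is needed only to put $\sigma_{<n}A$ in $\pvd(A)$, where the unit is already an isomorphism.
\item The $\lim^1$ terms vanish simply because the towers $H^i(\sigma_{<n}A)$ are eventually constant.
\end{itemize}
With these two repairs, your argument proves the statement.
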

Since $\Phi_A$ is contravariant and exact,
\[
\Phi_A(\Hcal_A)=\Filt(\add S_{B^{\mathrm{o}}})^{\mathrm{o}}
=(\modu H^0(B^{\mathrm{o}}))^{\mathrm{o}}.
\]
If the Koszul dual $B=A^!$ is proper, then $A$ is pvd-finite: indeed, $\pvd(A)=\thick(S_A)$ and properness of $B=\REnd_A(S_A)$ propagates by finite d\'evissage to Hom-finiteness of $\pvd(A)$. We shall also use that the equivalences of \cite[Theorem~D]{Fus24} lift to quasi-equivalences between the corresponding standard dg enhancements.

\begin{rmk}[Opposite conventions]\label{rmk:opposite-conventions}
With our right-module convention, there is a canonical quasi-isomorphism
\[
(A^{\mathrm{o}})^!\simeq (A^!)^{\mathrm{o}},
\]
cf. \cite[Remark~4.2]{Fus24}.
Thus, if $B=A^!$, the Koszul functor for $A^{\mathrm{o}}$ takes $S_{A^{\mathrm{o}}}$ to $((A^{\mathrm{o}})^!)^{\mathrm{o}}\simeq B$, not to a second copy of $B^{\mathrm{o}}$. Passing to opposites also preserves Jin's self-injectivity relation: $\add\DD B=\add\Sigma^{d+1}B$ if and only if $\add\DD(B^{\mathrm{o}})=\add\Sigma^{d+1}B^{\mathrm{o}}$.
\end{rmk}

\begin{rmk}\label{rmk:pvd-finite}
Suppose that $A$ is locally finite and $S_A\in\per(A)$. Then \eqref{eq:pvd-thick} gives $\pvd(A)\subseteq\per(A)$. Since local finiteness of $A$ implies that $\per(A)$ is Hom-finite, $\pvd(A)$ is Hom-finite as well; thus $A$ is pvd-finite and \cref{thm:Koszul-dual} applies. In this situation the two equivalences in \cref{thm:Koszul-dual} identify the inclusion $\pvd(A)\subseteq\per(A)$ with $\per(B^{\mathrm{o}})\subseteq\pvd(B^{\mathrm{o}})$ and induce an equivalence
\[
\cosg(A)\xrightarrow{\sim}\sg(B^{\mathrm{o}})^{\mathrm{o}}.
\]
\end{rmk}

\section{The cosingular fundamental domain}\label{sec:cosingular-fundamental-domain}

Throughout, let $A$ be a locally finite strictly positive dg algebra with $A^0\simeq k^r$ and put $B=A^!$.

\begin{dfn}\label{dfn:d-stable}
The dg algebra $A$ is called \defterm{$d$-stable} if
\begin{itemize}
\item [(i)] $S_A\in\Hcal_A\ast\Sigma^{-1}\Hcal_A\ast\cdots\ast\Sigma^{d+1}\Hcal_A$,
\item [(ii)] $H^i\RHom_A(S_A,A)=0$ if $i\neq d+1$.
\end{itemize}
\end{dfn}

Following \cite[Definition~2.2]{Jin20}, a non-positive dg algebra $B$ is called \defterm{$q$-self-injective}, for $q\geq1$, if
\[
\add B=\add\bigl(\Sigma^{q-1}\DD B\bigr)
\]
in $\D(B)$, equivalently $\add\DD B=\add(\Sigma^{1-q}B)$. We use this notion with $q=-d$.

\begin{prop}\label{prop:d-stable-selfinjective}
The dg algebra $A$ is $d$-stable if and only if its Koszul dual $B$ is $(-d)$-self-injective. In this case $B$ is automatically proper.
\end{prop}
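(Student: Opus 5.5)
We translate both conditions of \cref{dfn:d-stable} across Koszul duality and read them as statements about $B$. Since $A$ may not yet be known to be pvd-finite, we first reduce to the situation of \cref{thm:Koszul-dual}.

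\emph{Setup.} In one direction, condition (i) says $S_A\in\per(A)$, since $\Hcal_A\subseteq\per(A)$. By \cref{rmk:pvd-finite}, $A$ is then pvd-finite and \cref{thm:Koszul-dual} applies. In the other direction, if $B$ is $(-d)$-self-injective we first show $B$ is proper; then pvd-finiteness of $A$ follows from the remark after \cref{thm:Koszul-dual}.

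\emph{Condition (ii).} We compute $\RHom_A(S_A,A)$ via $\Phi_A$, which is a contravariant equivalence on $\per(A)$ onto $\pvd(B^{\mathrm o})^{\mathrm o}$:
\[
\RHom_A(S_A,A)\simeq\RHom_{B^{\mathrm o}}(\Phi_AA,\Phi_AS_A)=\RHom_{B^{\mathrm o}}(S_{B^{\mathrm o}},B^{\mathrm o}).
\]
Hence (ii) says that $\Hom_{\D(B^{\mathrm o})}(S_{B^{\mathrm o}},\Sigma^iB^{\mathrm o})$ vanishes for $i\ne d+1$.

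Write $q=-d$. We use Jin's characterization of $q$-self-injectivity, which is in the spirit of \cite{Jin20}: for a locally finite non-positive $B$, the condition $\add\DD B=\add\Sigma^{1-q}B$ holds if and only if $B$ is proper and the simples have Ext into $B$ concentrated in one degree, namely $\RHom(S_B,B)\simeq\Sigma^{-(q-1)}\DD(\text{semisimple})$. The direction ($\Leftarrow$) of this characterization goes as follows. The functor $\RHom(S,B)$ concentrated in degree $1-q=d+1$, with socle data given by a permutation, forces $\Sigma^{q-1}B$ to be injective-like. By duality $\Hom(S,\Sigma^{i}\Sigma^{d+1}\DD B)$ is also concentrated in degree $0$. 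Comparing minimal resolutions, i.e.\ co-t-structure/silting arguments in $\per$, gives $\add B=\add\Sigma^{q-1}\DD B$. I would reprove this carefully by passing through $\Phi_A$: $\DD B^{\mathrm o}$ corresponds under the duality to an object $\Psi$ with $\RHom_A(S_A,\Psi)$ computed from $\DD\RHom(S_A,S_A)$, so one checks which $A$-module it is.

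\emph{Condition (i).} Under $\Phi_A$, the window $\Hcal_A*\cdots*\Sigma^{d+1}\Hcal_A$ goes to $\Sigma^{-d-1}\modu H^0*\cdots*\modu H^0$ in $\D(B^{\mathrm o})$. This uses $\Phi_A(\Hcal_A)=(\modu H^0(B^{\mathrm o}))^{\mathrm o}$ together with the order reversal of $*$. The window is exactly the set of objects with cohomology in degrees $[0,q-1]$ (or the appropriately shifted range), by the standard t-structure on $\D(B^{\mathrm o})$. Since $S_A\mapsto B^{\mathrm o}$, condition (i) says $H^i(B)=0$ for $i<d+1$. This gives properness of $B$, i.e.\ $H^*(B)$ is concentrated in $[d+1,0]$.

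Conversely, if $B$ is $(-d)$-self-injective, then $B\in\add\Sigma^{q-1}\DD B$. Since $\DD B$ has cohomology in degrees $\ge0$, $\Sigma^{q-1}\DD B$ has cohomology in degrees $\ge 1-q=d+1$, so $H^{<d+1}(B)=0$, which is properness. Thus (i) holds, and (ii) follows from the previous step.

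\emph{Main obstacle.} The hard part is the converse in the condition (ii) step: deducing $\add\DD B=\add\Sigma^{d+1}B$ from concentration of $\RHom(S,B)$ in a single degree together with properness. I would try first to show that $\Sigma^{-d-1}B$ satisfies $\Hom(\per,\Sigma^{>0}-)$-vanishing against $\pvd$ as $\DD B$ does. From this, both $\Sigma^{-d-1}B$ and $\DD B$ are basic-equivalent "injective silting" objects determined by their socles, namely their Hom from the simples. Then Krull--Schmidt in $\per(B)$, together with the permutation of simples, identifies their additive closures.
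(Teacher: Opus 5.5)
Your reduction steps are sound. Condition (i) does transport under $\Phi_A$ to $H^i(B)=0$ for $i<d+1$, which gives properness directly in the forward direction; the paper instead deduces properness afterwards from the self-injectivity equation by amplitude. Your translation of (ii) into $H^i\RHom_{B^{\mathrm{o}}}(S_{B^{\mathrm{o}}},B^{\mathrm{o}})=0$ for $i\neq d+1$ is correct. The converse, self-injective $\Rightarrow$ (ii), is the easy computation $\RHom(S,B^{\mathrm{o}})\in\add\Sigma^{-d-1}\RHom(S,\DD B^{\mathrm{o}})=\add\Sigma^{-d-1}\DD S$.

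The gap is the implication that carries the whole forward direction: (ii) plus properness $\Rightarrow$ $\add\DD B=\add\Sigma^{d+1}B$. You do not prove it. The ``characterization in the spirit of Jin'' is not a result you can cite as stated. Your sketch also gives no mechanism. ``Krull--Schmidt in $\per(B)$'' presupposes $\DD B\in\per(B)$, which is part of the conclusion. ``Injective silting objects determined by their socles'' restates what must be shown. The shifts are also garbled there: the object to compare with $\DD B$ is $\Sigma^{d+1}B$, not $\Sigma^{-d-1}B$ or $\Sigma^{q-1}B$. Likewise $\RHom(S,B)$ should be $\Sigma^{q-1}\DD(\text{semisimple})$, which lives in degree $1-q=d+1$.

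What is missing is a fully faithful functor that turns ``cohomology concentrated in one degree'' into ``direct sum of simple dg modules'', followed by a count of indecomposable summands.

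The paper does this on the $A$-side. Under $\RHom_A(-,A)\colon\per(A)^{\mathrm{o}}\to\per(A^{\mathrm{o}})$, the $r$ summands of $S_A$ go to $r$ non-isomorphic indecomposables concentrated in degree $d+1$, so $\add\RHom_A(S_A,A)=\add\Sigma^{-d-1}S_{A^{\mathrm{o}}}$. Tensor--evaluation then gives $\Phi_{A^{\mathrm{o}}}(\RHom_A(S_A,A))\simeq\DD B$, while $\Phi_{A^{\mathrm{o}}}(S_{A^{\mathrm{o}}})\simeq B$. Koszul duality for $A^{\mathrm{o}}$ therefore converts (ii) directly into $\add\DD B=\add\Sigma^{d+1}B$.

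Your $B$-side route can be completed equivalently as follows.
\begin{itemize}
\item Take $F=\RHom_{B^{\mathrm{o}}}(S_{B^{\mathrm{o}}},-)$. It is an equivalence $\pvd(B^{\mathrm{o}})=\thick(S_{B^{\mathrm{o}}})\to\per(E)$, where $E=\REnd_{B^{\mathrm{o}}}(S_{B^{\mathrm{o}}})\simeq A^{\mathrm{o}}$ is strictly positive up to quasi-isomorphism.
\item $F$ sends $\DD(B^{\mathrm{o}})$ to $\DD S_{B^{\mathrm{o}}}$, and by your translation of (ii) it sends $\Sigma^{d+1}B^{\mathrm{o}}$ to an object with cohomology only in degree $0$.
\item By Keller--Nicol\'as \cite{KN13}, both images are sums of simple $E$-modules. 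Each source has exactly $r$ pairwise non-isomorphic indecomposable summands, so $\add F(\DD B^{\mathrm{o}})=\add F(\Sigma^{d+1}B^{\mathrm{o}})$.
\item Full faithfulness of $F$ and \cref{rmk:opposite-conventions} then yield $\add\DD B=\add\Sigma^{d+1}B$.
\end{itemize}
Since $F(B^{\mathrm{o}})\simeq\RHom_A(S_A,A)$ by your own computation, this is precisely the paper's argument seen through Koszul duality.
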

\begin{proof}
Suppose first that $A$ is $d$-stable. By \cref{dfn:d-stable}(i), $S_A\in\per(A)$, so \cref{rmk:pvd-finite} applies. The duality
\[
\RHom_A(-,A):\per(A)^{\mathrm{o}}\xrightarrow{\sim}\per(A^{\mathrm{o}})
\]
sends the $r$ pairwise non-isomorphic indecomposable summands of $S_A$ to $r$ pairwise non-isomorphic indecomposable objects. By \cref{dfn:d-stable}(ii), these objects have cohomology concentrated in degree $d+1$. Since $H^0(A^{\mathrm{o}})=(A^0)^{\mathrm{o}}\simeq k^r$ is semisimple, they are, up to permutation, the corresponding shifts of the simple $A^{\mathrm{o}}$-modules. Hence
\begin{equation}\label{eq:dual-simple}
\add\RHom_A(S_A,A)=\add\bigl(\Sigma^{-d-1}S_{A^{\mathrm{o}}}\bigr).
\end{equation}
Applying $\Phi_{A^{\mathrm{o}}}=\RHom_{A^{\mathrm{o}}}(-,S_{A^{\mathrm{o}}})$, \cref{rmk:opposite-conventions} gives $\Phi_{A^{\mathrm{o}}}(S_{A^{\mathrm{o}}})\simeq B$, while tensor--Hom adjunction and the symmetric Frobenius identification $\DD S_A\simeq S_{A^{\mathrm{o}}}$ give
\[
\begin{aligned}
\Phi_{A^{\mathrm{o}}}\bigl(\RHom_A(S_A,A)\bigr)
&=\RHom_{A^{\mathrm{o}}}\bigl(\RHom_A(S_A,A),S_{A^{\mathrm{o}}}\bigr)\\
&\simeq\RHom_{A^{\mathrm{o}}}\bigl(\RHom_A(S_A,A),\DD S_A\bigr)\\
&\simeq\DD\bigl(S_A\otimes_A^{\mathbf L}\RHom_A(S_A,A)\bigr)\\
&\simeq\DD\RHom_A(S_A,S_A)=\DD B.
\end{aligned}
\]
For $M\in\per(A)$ and $N\in\D(A)$, the canonical tensor--evaluation morphism
\[
N\otimes_A^{\mathbf L}\RHom_A(M,A)\longrightarrow\RHom_A(M,N)
\]
is an isomorphism: this is clear for $M=A$, and the full subcategory of $M$ for which it is an isomorphism is thick. Applying this with $M=N=S_A$ gives the last isomorphism above.
Since \eqref{eq:dual-simple} also gives $S_{A^{\mathrm{o}}}\in\per(A^{\mathrm{o}})$, \cref{rmk:pvd-finite} applies to $A^{\mathrm{o}}$. Koszul duality reverses shifts, so \eqref{eq:dual-simple} gives
\[
\add\DD B=\add\bigl(\Sigma^{d+1}B\bigr).
\]
Thus $B$ is $(-d)$-self-injective. Since $B$ is non-positive and locally finite while $\DD B$ is non-negative, the equality $\add\DD B=\add(\Sigma^{d+1}B)$ forces $H^i(B)=0$ for $i\notin[d+1,0]$. Hence $B$ is proper.

Conversely, assume that $B$ is $(-d)$-self-injective. Thus
\[
\add\DD B=\add\bigl(\Sigma^{d+1}B\bigr).
\]
Since $B$ is non-positive and locally finite while $\DD B$ is non-negative, this equality forces $H^i(B)=0$ for $i\notin[d+1,0]$; in particular, $B$ is proper. Hence $A$ is pvd-finite by the observation after \cref{thm:Koszul-dual}, and \cref{thm:Koszul-dual} applies. Let $\Hcal_{B^{\mathrm{o}}}\subseteq\D(B^{\mathrm{o}})$ be the standard heart, identified with $\modu H^0(B^{\mathrm{o}})$. The standard truncation tower of $B^{\mathrm{o}}\in\pvd(B^{\mathrm{o}})^{[d+1,0]}$ gives
\[
B^{\mathrm{o}}\in
\Sigma^{-d-1}\Hcal_{B^{\mathrm{o}}}*\Sigma^{-d-2}\Hcal_{B^{\mathrm{o}}}*\cdots*\Sigma\Hcal_{B^{\mathrm{o}}}*\Hcal_{B^{\mathrm{o}}}.
\]
Since $\Phi_A$ is contravariant, sends $S_A$ to $B^{\mathrm{o}}$, and sends $\Hcal_A$ onto $\Hcal_{B^{\mathrm{o}}}^{\mathrm{o}}$, this yields
\[
S_A\in\Hcal_A*\Sigma^{-1}\Hcal_A*\cdots*\Sigma^{d+1}\Hcal_A.
\]
Thus \cref{dfn:d-stable}(i) holds. Finally, $(A^{\mathrm{o}})^!\simeq B^{\mathrm{o}}$ is proper, so the Koszul duality for $A^{\mathrm{o}}$ is available. Applying it to the self-injectivity equality recovers \eqref{eq:dual-simple}, and hence
\[
H^i\RHom_A(S_A,A)=0\qquad(i\neq d+1).
\]
Thus \cref{dfn:d-stable}(ii) holds.
\end{proof}

\begin{prop}[Proper Koszul duals]\label{prop:smooth-proper}
Let $A$ be a locally finite strictly positive dg algebra with $A^0\simeq k^r$, and assume that its Koszul dual $B=A^!$ is proper. Then $S_A\in\per(A)$, hence $\pvd(A)=\thick(S_A)\subseteq\per(A)$. Moreover, $A$ is homologically smooth and the dg category $\pvd_{\dg}(A)$ is proper. In particular, these conclusions hold for every $d$-stable $A$.
\end{prop}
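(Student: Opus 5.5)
We need three things: $S_A\in\per(A)$, homological smoothness of $A$, and properness of $\pvd_{\dg}(A)$. The last clause then follows from \cref{prop:d-stable-selfinjective}, which shows that a $d$-stable $A$ has proper Koszul dual.

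\textbf{Step 1: $S_A\in\per(A)$.} Since $B$ is proper, $A$ is pvd-finite by the observation after \cref{thm:Koszul-dual}, so \cref{thm:Koszul-dual} applies. Under $\Phi_A$, the object $S_A$ goes to $B^{\mathrm o}$ and $\per(A)$ goes onto $\pvd(B^{\mathrm o})^{\mathrm o}$. Because $B$ is proper, $B^{\mathrm o}\in\pvd(B^{\mathrm o})$.

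This alone is not enough, since the equivalence only tells us about objects of $\pvd(A)$ a priori. So I would argue concretely instead. Since $B$ is non-positive with $H^i(B)=0$ for $|i|\gg0$, the standard truncation tower of $B^{\mathrm o}$ exhibits it in a finite extension window
\[
\Hcal_{B^{\mathrm o}}*\Sigma\Hcal_{B^{\mathrm o}}*\cdots*\Sigma^{m}\Hcal_{B^{\mathrm o}}, \qquad \Hcal_{B^{\mathrm o}}=\Filt(\add S_{B^{\mathrm o}}),
\]
exactly as in the converse part of \cref{prop:d-stable-selfinjective}. Transport this through $\Phi_A$; more precisely, use its inverse on $\per(A)$, which sends $S_{B^{\mathrm o}}$ to $A$. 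This produces an object $X\in\Hcal_A*\Sigma^{-1}\Hcal_A*\cdots*\Sigma^{-m}\Hcal_A\subseteq\per(A)$ with $\Phi_A(X)\simeq B^{\mathrm o}\simeq\Phi_A(S_A)$.

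Since $\Phi_A$ is fully faithful on all of $\D(A)$ restricted to $\thick(A,S_A)$ (Koszul duality for locally finite positive $A$), we get $X\simeq S_A$. The main obstacle is justifying this full faithfulness on a category containing both $S_A$ and $\per(A)$. I would try the dg-Morita description $\RHom_B(\Phi_A(M),\Phi_A(N))$ together with biduality $A\simeq\REnd_{B}(S_B)$, valid since $A$ is locally finite and strictly positive. Once $S_A\in\per(A)$ is known, \eqref{eq:pvd-thick} gives $\pvd(A)=\thick(S_A)\subseteq\per(A)$.

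\textbf{Step 2: properness of $\pvd_{\dg}(A)$.} Since $\pvd(A)=\thick(S_A)$ and $\REnd_A(S_A)=B$ has finite total cohomology, finite dévissage shows that all morphism complexes in $\pvd_{\dg}(A)$ have finite-dimensional total cohomology.

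\textbf{Step 3: homological smoothness of $A$.} I would show that the diagonal bimodule $A$ lies in $\per(A^e)$. By Step 1, $S_A$ is perfect over $A$, and by symmetry, using $(A^{\mathrm o})^!\simeq B^{\mathrm o}$ which is proper, $S_{A^{\mathrm o}}$ is perfect over $A^{\mathrm o}$. Hence $S_A\otimes_k S_{A^{\mathrm o}}$, that is, the simple $\ell^e$-module $A^0\otimes(A^0)^{\mathrm o}$ viewed over $A^e$, is perfect over $A^e$.

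The diagonal $A$ is locally finite and bounded below, and it is the homotopy limit of its finite-dimensional truncations $\tau_{\le n}A$, each of which lies in $\thick(S_{A^e})\subseteq\per(A^e)$. This limit alone does not give perfection. Instead I would use the bar resolution $A\simeq A\otimes_\ell\operatorname{Bar}\otimes_\ell A$ and Koszul duality to identify $A$ as an $A^e$-module with one built from the finite-dimensional dual of $B$. Since $B$ is proper, only finitely many cells are needed, so $A\in\per(A^e)$.

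Alternatively, one can cite that $\per_{\dg}(A)$ is quasi-equivalent to $\pvd_{\dg}(B^{\mathrm o})^{\mathrm o}$. For proper non-positive $B$ this category is smooth, because it equals $\per_{\dg}$ of $\REnd(S_B)=A$ generated by the single object $S_B$, and smoothness of $\pvd_{\dg}$ of a proper connective algebra over a perfect field is standard (Raedschelders–Stevenson / Lunts). I would adopt this citation route, with the bar-resolution argument as a backup.
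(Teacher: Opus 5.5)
Your Steps~1 and~2 are the paper's argument, and the final clause is handled exactly as the paper does. The real divergence is in Step~3.

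\textbf{Step~1.} The obstacle you flag is removed by the ambient form of \cite[Theorem~D(2)]{Fus24}, which the paper relies on. Koszul duality is an equivalence $\D^+_{\mathrm{fd}}(A)\simeq\D^-_{\mathrm{fd}}(B^{\mathrm{o}})^{\mathrm{o}}$, and under it $\per(A)$ corresponds exactly to $\pvd(B^{\mathrm{o}})$. Since $S_A\in\D^+_{\mathrm{fd}}(A)$ and $\Phi_A(S_A)\simeq B^{\mathrm{o}}$ has finite total cohomology, $S_A\in\per(A)$ follows at once. Your truncation tower and auxiliary object $X$ are therefore unnecessary.

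Biduality $A\simeq\REnd(S_{B^{\mathrm{o}}})$ alone would not give the full faithfulness on $\thick(A,S_A)$ that you invoke. It handles $\RHom_A(A,A)$, and $\RHom_A(S_A,S_A)$ and $\RHom_A(A,S_A)$ are tautological. But the mixed comparison $\RHom_A(S_A,A)\to\RHom_{B^{\mathrm{o}}}(S_{B^{\mathrm{o}}},B^{\mathrm{o}})$ is genuinely extra information, and it is precisely what the ambient statement supplies.

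A minor slip: the truncation tower places $B^{\mathrm{o}}$ in $\Sigma^m\Hcal_{B^{\mathrm{o}}}*\cdots*\Sigma\Hcal_{B^{\mathrm{o}}}*\Hcal_{B^{\mathrm{o}}}$, in that order. Your transported window is nevertheless the correct one.

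\textbf{Step~2.} This is the same finite d\'evissage the paper performs via $\pvd_{\dg}(A)\simeq\per_{\dg}(B^{\mathrm{o}})^{\mathrm{o}}$, just carried out on the $A$-side.

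\textbf{Step~3.} You transfer smoothness across $\per_{\dg}(A)\simeq\pvd_{\dg}(B^{\mathrm{o}})^{\mathrm{o}}$ and import the external theorem that $\pvd_{\dg}$ of a proper connective dg algebra over a perfect field is smooth. That is valid here, since $k$ is algebraically closed. However, your remark that this category ``equals $\per_{\dg}$ of $\REnd(S_B)=A$'' only restates Koszul duality and gives no reason for smoothness; all the content lies in the citation.

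The paper instead reuses the Step~1 mechanism for the enveloping algebra. $A^e$ is locally finite and strictly positive with $(A^e)^0\simeq\ell^e$ semisimple, and $(A^e)^!\simeq B^e$ is proper by K\"unneth. The diagonal lies in $\D^+_{\mathrm{fd}}(A^e)$, and
\[
\RHom_{A^e}(A,\ell^e)\simeq\RHom_{A^e}\bigl(A,\RHom_k(\ell,\ell)\bigr)\simeq\RHom_A(\ell,\ell)=B
\]
has finite total cohomology. So the duality for $A^e$ places $A$ in $\per(A^e)$. This keeps everything inside the single duality theorem already in use, and in effect yields the smoothness statement you cite in this setting. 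Your route is shorter, but it rests on a deep independent input.

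Your bar-resolution backup is not yet an argument. The bar resolution of the diagonal has infinitely many cells even when $B$ is proper. Reducing it to finitely many cells needs a minimal model of the bar coalgebra and control of the resulting twisted differential, and you address neither. Your first attempt, perfectness of $S_{A^e}$, cannot help either, because the diagonal generally has infinite-dimensional total cohomology and so need not lie in $\thick(S_{A^e})$.
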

\begin{proof}
Put $\ell=A^0\simeq k^r$. Since $B$ is proper, $A$ is pvd-finite by the observation after \cref{thm:Koszul-dual}. Under \cref{thm:Koszul-dual}, $S_A$ corresponds to the regular $B^{\mathrm{o}}$-module, which lies in $\pvd(B^{\mathrm{o}})$; hence $S_A\in\per(A)$. Thus \eqref{eq:pvd-thick} gives
\[
\pvd(A)=\thick(S_A)\subseteq\per(A).
\]

The enveloping dg algebra
\[
A^e=A^{\mathrm{o}}\otimes_k A
\]
is again locally finite strictly positive with $(A^e)^0\simeq\ell^e$. By the K\"unneth formula, its Koszul dual is quasi-isomorphic, up to the canonical opposite convention, to
\[
(A^e)^!\simeq B^{\mathrm{o}}\otimes_k B=B^e.
\]
Hence $(A^e)^!$ is proper. By the same observation, $A^e$ is pvd-finite, so the full Koszul duality of \cite[Theorem~D(2)]{Fus24} applies to $A^e$.

For this proof, write
\[
\D^+_{\mathrm{fd}}(C)=\{X\in\D(C)\mid H^i(X)=0\text{ for }i\ll0,\ \dim_kH^i(X)<\infty\text{ for all }i\},
\]
with $\D^-_{\mathrm{fd}}(C)$ defined dually. The diagonal bimodule $A$, regarded as a right $A^e$-module, belongs to $\D^+_{\mathrm{fd}}(A^e)$. Via the standard symmetric Frobenius form on $\ell\simeq k^r$, we have $\DD\ell\simeq \ell$ as $\ell$-bimodules, and hence
\[
\RHom_k(\ell,\ell)\simeq \DD\ell\otimes_k\ell\simeq \ell^e=S_{A^e}
\]
as right $A^e$-modules. Tensor--Hom adjunction therefore gives
\[
\Phi_e(A):=\RHom_{A^e}(A,S_{A^e})
\simeq
\RHom_{A^e}\bigl(A,\RHom_k(\ell,\ell)\bigr)
\simeq
\RHom_A(\ell,\ell)
=
B.
\]
In particular, $\Phi_e(A)$ has finite-dimensional total cohomology because $B$ is proper. By \cite[Theorem~D(2)]{Fus24}, the Koszul duality equivalence
\[
\D^+_{\mathrm{fd}}(A^e)\xrightarrow{\sim}
\D^-_{\mathrm{fd}}\bigl(((A^e)^!)^{\mathrm{o}}\bigr)^{\mathrm{o}}
\]
identifies $\per(A^e)$ with the perfectly valued subcategory on the Koszul-dual side. Since $\Phi_e(A)$ belongs to that subcategory, we obtain
\[
A\in\per(A^e).
\]
Thus $A$ is homologically smooth.

Finally, Koszul duality gives a dg quasi-equivalence
\[
\pvd_{\dg}(A)\simeq \per_{\dg}(B^{\mathrm{o}})^{\mathrm{o}}.
\]
Since $B$ is proper, the dg category $\per_{\dg}(B^{\mathrm{o}})$ is proper. Hence so is $\pvd_{\dg}(A)$.
\end{proof}

If $A$ is $d$-stable, then $B^{\mathrm{o}}$ is proper and $(-d)$-self-injective by \cref{prop:d-stable-selfinjective} and \cref{rmk:opposite-conventions}; in particular it satisfies Assumption~0.1 of \cite{Jin20}, and \cite[Proposition~2.3]{Jin20} gives
\[
\CM(B^{\mathrm{o}})=\pvd(B^{\mathrm{o}})^{[d+1,0]}.
\]
We record the resulting Cohen--Macaulay description, which will be used repeatedly. By \cref{rmk:pvd-finite} and \cite[Theorem~2.4]{Jin20}, the first equivalence is
\begin{equation}\label{eq:CM-opposite-identification}
\cosg(A)\xrightarrow{\sim}\underline{\CM}(B^{\mathrm{o}})^{\mathrm{o}}
\xrightarrow{\sim}\underline{\CM}(B).
\end{equation}
For the second equivalence, \cite[Proposition~2.6]{Jin20} gives an exact duality
\[
\RHom_{B^{\mathrm{o}}}(-,B^{\mathrm{o}}):
\CM(B^{\mathrm{o}})^{\mathrm{o}}\xrightarrow{\sim}\CM(B),
\]
which sends the projective-injective subcategory $\add B^{\mathrm{o}}$ to $\add B$. Hence it descends to the stable categories and gives the second equivalence in \eqref{eq:CM-opposite-identification}.

Put
\[
\F_A:=\F^{d+1}(\Hcal_A).
\]

\begin{prop}\label{prop:fundamental-domain}
Assume that $A$ is $d$-stable.
Then the following hold.
\begin{itemize}
\item [(1)] The quotient functor $\per(A)\to\cosg(A)$ induces an equivalence
\[
\F_A/[\add S_A]\xrightarrow{\sim}\cosg(A).
\]
\item [(2)] For every $X,Y\in\F_A$ and every $i>0$, the map
\[
\Hom_{\D(A)}(X,\Sigma^iY)\longrightarrow\Hom_{\cosg(A)}(X,\Sigma^iY)
\]
is an isomorphism.
\end{itemize}
\end{prop}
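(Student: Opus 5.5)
Via \cref{thm:Koszul-dual} and \cref{rmk:pvd-finite} I would move the whole statement to the Cohen--Macaulay side and use Jin's description there. For $d$-stable $A$, $B^{\mathrm{o}}$ is proper and $(-d)$-self-injective (\cref{prop:d-stable-selfinjective}, \cref{rmk:opposite-conventions}). The contravariant equivalence $\Phi_A\colon\per(A)\xrightarrow{\sim}\pvd(B^{\mathrm{o}})^{\mathrm{o}}$ sends $\Hcal_A$ onto $\Hcal_{B^{\mathrm{o}}}$ and $\Sigma^{j}$ to $\Sigma^{-j}$. Hence it sends $\F_A=\Hcal_A*\Sigma^{-1}\Hcal_A*\cdots*\Sigma^{d+1}\Hcal_A$ onto the extension-closed subcategory
\[
\Sigma^{-d-1}\Hcal_{B^{\mathrm{o}}}*\cdots*\Sigma\Hcal_{B^{\mathrm{o}}}*\Hcal_{B^{\mathrm{o}}}.
\]
By the standard truncation tower (as in the proof of \cref{prop:d-stable-selfinjective}), this is exactly $\pvd(B^{\mathrm{o}})^{[d+1,0]}$, which equals $\CM(B^{\mathrm{o}})$ by \cite[Proposition~2.3]{Jin20}. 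The functor also sends $\add S_A$ to $\add B^{\mathrm{o}}$, the projective-injective objects of $\CM(B^{\mathrm{o}})$.

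\emph{Part (1).} After this transport, part (1) says that $\CM(B^{\mathrm{o}})/[\add B^{\mathrm{o}}]\to\sg(B^{\mathrm{o}})$ is an equivalence, i.e.\ $\underline{\CM}(B^{\mathrm{o}})\simeq\sg(B^{\mathrm{o}})$. This is \cite[Theorem~2.4]{Jin20}, already used in \eqref{eq:CM-opposite-identification}. The one thing to check is that the functor in Jin's theorem is the one induced by the inclusion $\CM\subseteq\pvd$ followed by the quotient, so that it matches the functor $\per(A)\to\cosg(A)$ restricted to $\F_A$ under $\Phi_A$. I expect this to hold by construction.

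\emph{Part (2).} Here I would argue directly in $\per(A)$, and this is the step I expect to need the most care. Take $X,Y\in\F_A$ and $i>0$. A morphism in $\cosg(A)$ is a roof $X\xleftarrow{s}X'\to\Sigma^iY$ with $\operatorname{cone}(s)\in\pvd(A)=\thick(S_A)$. Injectivity and surjectivity both follow if I show that, for every $P\in\pvd(A)$,
\[
\Hom(P,\Sigma^{i}Y)=0=\Hom(P,\Sigma^{i+1}Y)\quad\text{for } i\ge1,
\]
or dually, vanishing of $\Hom(X,\Sigma^{i}P)$ and $\Hom(X,\Sigma^{i-1}P)$.

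The natural plan is to dualize. By \cref{dfn:d-stable}(ii), $\RHom_A(S_A,A)$ lives in degree $d+1$, and by \cref{prop:smooth-proper} $S_A$ is perfect. Hence
\[
\Hom(S_A,\Sigma^jA)=0\quad\text{for } j\ne d+1,
\]
and the same holds with $\Sigma^{j}\Hcal_A$ in place of $A$. Since $Y\in\F_A$ is built from $\Sigma^{m}\Hcal_A$ with $d+1\le m\le0$, this gives $\Hom(S_A,\Sigma^{n}Y)=0$ unless $d+1\le n+m\le d+1$ for some such $m$, i.e.\ unless $n\in[d+1,-d-1]$. That is not yet enough.

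So instead I would use the semiorthogonal behaviour on the CM side. There, part (2) becomes the statement that for $M,N\in\CM(B^{\mathrm{o}})$ and $i>0$,
\[
\Hom_{\D}(M,\Sigma^iN)\simeq\Hom_{\sg}(M,\Sigma^iN).
\]
This is Jin's Buchweitz-type result: the relevant Hom groups vanish because $N\in\pvd^{[d+1,0]}$ and perfect objects are built from $B^{\mathrm{o}}$, which is injective relative to $\CM$ in positive degrees by self-injectivity ($\add\DD B^{\mathrm{o}}=\add\Sigma^{d+1}B^{\mathrm{o}}$).

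Concretely, I would represent a morphism in $\sg$ by a roof through a perfect cone and use that $\Hom(\per^{\le\text{appropriate}},\Sigma^iN)=0$. I would cite \cite{Jin20} for this, or prove it by d\'evissage, keeping careful track of the contravariance, which turns $\Sigma^i$ into $\Sigma^{-i}$ and swaps source and target.
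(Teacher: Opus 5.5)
Your proposal is correct and follows the paper's proof: you transport along $\Phi_A$, identify $\Phi_A(\F_A)$ with $\CM(B^{\mathrm{o}})^{\mathrm{o}}=\pvd(B^{\mathrm{o}})^{[d+1,0]}$ and $\add S_A$ with $\add B^{\mathrm{o}}$, and then invoke \cite[Theorem~2.4]{Jin20} for (1) and Jin's Buchweitz-type comparison for (2) (the paper cites \cite[Proposition~2.6]{Jin20}), with contravariance turning $\Hom(X,\Sigma^iY)$ into $\Hom(\Phi_AY,\Sigma^i\Phi_AX)$. You were right to abandon the direct attempt: since $\pvd(A)$ is shift-closed, its proposed sufficient condition (vanishing of $\Hom(P,\Sigma^iY)$ for all $P\in\pvd(A)$) would force $Y$ to be right orthogonal to $\pvd(A)$, which already fails for $Y=S_A$.
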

\begin{proof}
By \cref{prop:d-stable-selfinjective} and the Cohen--Macaulay description above, \cref{thm:Koszul-dual} gives
\[
\Phi_A(\F_A)=\CM(B^{\mathrm{o}})^{\mathrm{o}}.
\]
Now (1) follows from \cite[Theorem~2.4]{Jin20}, and (2) follows from \cite[Proposition~2.6]{Jin20}.
\end{proof}

\section{Cosingular realization}\label{sec:cosingular-realization}
\begin{thm}[Cosingular realization]\label{thm:realization}
Let $d\leq-2$, and let $\C$ be a Hom-finite algebraic triangulated category equipped with a pretriangulated dg enhancement $\C_{\dg}$. Let $\Lcal=\{L_1,\dots,L_r\}$ be a $(-d)$-simple-minded system in $\C$, and put $L=\bigoplus_{i=1}^rL_i$. Then there exist a $d$-stable locally finite strictly positive dg algebra $A$ with $A^0\simeq k^r$ and an exact quasi-functor $\pi_{\dg}:\per_{\dg}(A)\to\C_{\dg}$ which induces a quasi-equivalence
\[
\overline\pi_{\dg}:\cosg_{\dg}(A)\simeq\C_{\dg}.
\]
\end{thm}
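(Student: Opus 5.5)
Let $E=\REnd_{\C_{\dg}}(L)$ be the dg endomorphism algebra of $L$ computed in the enhancement. Then $H^0(E)\simeq k^r$, and $H^i(E)=0$ for $d+1\leq i\leq-1$ by \cref{dfn:sms}(ii). Hom-finiteness of $\C$ makes $E$ locally finite. However, $E$ can have cohomology in degrees $\leq d$, so it is not positive. The plan is to replace $E$ by a strictly positive model $A$ that keeps only the non-negative cohomology. Concretely, I would take a minimal $A_\infty$-model of $E$ (by Kadeishvili), or work with a cofibrant model, and let $A$ be the quotient dg algebra obtained by discarding the negative part. Discarding the negative part is compatible with multiplication because the product of non-negative degrees lands in non-negative degrees. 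Then $H^0(A)=\ell=k^r$ with the idempotents of the $L_i$, and I would strictify to a strictly positive dg algebra with $A^0=\ell$. The exact quasi-functor should be $\pi_{\dg}=-\otimes_A X$, where $X$ is a suitable $A$-$\C_{\dg}$-bimodule lifting $L$. The naive map goes the other way, $E\to A$. Since the negative cohomology of $E$ sits in degrees $\leq d$, I would use the ``window'' $\Hom(L,\Sigma^iL)$ for $i\geq0$ and realize $L$ as an $A$-module up to homotopy through the twisted-object / Koszul-dual construction. A more robust choice is to go through the Koszul dual: set $B=\tau_{\leq0}$ of a suitable $\REnd$ of the injective envelope $I$ from \cref{lem:injectives}, and define $A:=B^!$.

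More precisely, I would construct $B$ first. Take $I$ as in \cref{lem:injectives} and put $B^{\mathrm o}$ or $B:=\tau_{\leq0}\REnd_{\C_{\dg}}(I)$, a non-positive locally finite dg algebra. The point is that $\F^{d+1}(\Hcal_\Lcal)=\C$ should play the role of $\CM(B)$, with $I$ as the projective-injective generator. I would then check $(-d)$-self-injectivity $\add\DD B=\add\Sigma^{d+1}B$. This uses the window description $I\in\Hcal_\Lcal\cap(\Sigma^{-2}\Hcal_\Lcal*\cdots*\Sigma^d\Hcal_\Lcal)$ and $\Hom_\C(L,I)=k^r$, so that $\Hom_\C(I,\Sigma^{-i}I)$ is governed by the window. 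Next put $A=B^!$ via the opposite Koszul duality. By \cref{prop:d-stable-selfinjective}, $A$ is $d$-stable and locally finite strictly positive with $A^0\simeq k^r$. By \eqref{eq:CM-opposite-identification}, $\cosg(A)\simeq\underline{\CM}(B)$. The triangulated statement then reduces to $\C\simeq\underline{\CM}(B)$.

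For the comparison I would use the fundamental domain of \cref{prop:fundamental-domain}. I would build a dg functor $\per_{\dg}(A)\to\C_{\dg}$ sending $A$ to $L$ (equivalently, the Koszul side sends $S_A$ to $I$'s counterpart). On cohomology in degrees $i>0$ the comparison map $\Hom_{\D(A)}(A,\Sigma^iA)=H^i(A)\to\Hom_\C(L,\Sigma^iL)$ is an isomorphism by construction. The functor kills $S_A$, because $S_A$ is sent to an object whose image in $\C$ is a cone over $\add I$-type data, hence zero. So it factors through $\cosg_{\dg}(A)$. Fully faithfulness I would verify by dévissage on $\F_A$. By \cref{prop:fundamental-domain}(2), positive-degree Homs between objects of $\F_A$ agree in $\per(A)$ and $\cosg(A)$. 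In $\C$, the positive extensions among the $\Sigma^j\Hcal_\Lcal$, $d+1\leq j\leq0$, are computed by $A$. The degree-$0$ maps are handled by the five lemma applied to the filtration triangles. Essential surjectivity follows from \cref{dfn:sms}(iii), since the image contains $\Hcal_\Lcal$ and is closed under the extensions forming $\F^{d+1}$. Finally, the quotient dg functor being quasi-fully faithful and essentially surjective on $H^0$ gives the quasi-equivalence $\overline\pi_{\dg}$.

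The main obstacle is the honest construction of the chain-level functor $\pi_{\dg}$. This means producing a strictly positive dg algebra $A$ together with an $A$-module structure on $L$ in $\C_{\dg}$ that induces isomorphisms on $H^{>0}$, despite $\REnd(L)$ having unwanted negative cohomology. My first attempt would be the minimal $A_\infty$-model with the quotient by negative degrees. This quotient is an $A_\infty$-morphism $E\to A$, the wrong direction for tensoring. To fix the direction, I would instead use the $E$-$\C_{\dg}$-bimodule $L$ and define $\pi_{\dg}$ on the subcategory $\F_A$ by matching window filtrations. I would then argue that the obstruction classes to lifting vanish, because they live in degrees $\leq d$, which are killed in the cosingular quotient.
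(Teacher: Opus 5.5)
There is a genuine gap, and it sits exactly where you flag the main obstacle: you never construct $A$ together with $\pi_{\dg}$. Your closing claim, that obstruction classes vanish ``because they live in degrees $\leq d$, which are killed in the cosingular quotient'', is not an argument. The classes $\Hom_\C(L,\Sigma^{\leq d}L)$ are genuinely present in $\C$, and $\C$ is the cosingular quotient.

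The missing idea is to take the non-negative part as a \emph{subalgebra}, not a quotient. Your remark that products of non-negative degrees stay non-negative is a subalgebra statement. The negative part is not an ideal: already for $\C=\C_d(k)$ and $\Lcal=\{k\}$ one has $H^*(\REnd_{\C_{\dg}}(L))\simeq k[u^{\pm1}]$ with $|u|=-d$. So no algebra map $E\to A$ to a non-negatively graded $A$ can be an isomorphism on $H^{\geq0}$, since it would send $1=u\cdot u^{-1}$ to $0$.

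The paper (\cref{lem:construct-A}) proceeds as follows. Put $R=\REnd_{\C_{\dg}}(L)$ and let $\ell$ be the span of the closed summand idempotents. Choose an $\ell$-bimodule complement $V$ of $d(R^0)$ in $R^1$, and set $A^0=\ell$, $A^1=V$, $A^i=R^i$ for $i\geq2$. This is a strictly positive dg subalgebra, and the inclusion is a cohomology isomorphism in degrees $\geq0$. Together with \cref{dfn:sms}(ii) this gives $H^i(A)\simeq\Hom_\C(L,\Sigma^iL)$ for all $i\geq d+1$. Then $\pi_{\dg}$ is induction along $A\to R$ followed by the Yoneda quasi-equivalence $\per_{\dg}(R)\simeq\C_{\dg}$, which holds because $\thick(L)=\C$.

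Your fallback through the Koszul dual also fails. In the same example $I=L=k$, so $\tau_{\leq0}\REnd_{\C_{\dg}}(I)\simeq k[u^{-1}]$ has cohomology in every degree $dn$ with $n\geq0$. It is therefore not $(-d)$-self-injective, since self-injectivity forces cohomology into $[d+1,0]$ (see the proof of \cref{prop:d-stable-selfinjective}). The actual Koszul dual of the realizing algebra $A\simeq k[u]$ is $k[\beta]/(\beta^2)$ with $|\beta|=d+1$. More generally $A^!=\REnd_A(S_A)$ with $\pi(S_A)=0$, so it cannot be read off as an endomorphism algebra inside $\C$. Also, ``reducing to $\C\simeq\underline{\CM}(B)$'' merely restates the theorem.

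Consequently, $d$-stability must be proved directly for the $A$ above, and your sketch has no argument for it. The phrase ``a cone over $\add I$-type data, hence zero'' gives neither $S_A\in\F_A$ nor an identification of that cone with $S_A$. The paper does the following (\cref{lem:construct-SA,lem:dual-concentration}):
\begin{itemize}
\item It lifts the identity of $I$ to a morphism $\widetilde X\to\widetilde I$ with $\widetilde X\in\Sigma^{-2}\F^{d+2}(\Hcal_A)$ and $\widetilde I\in\Hcal_A$, using \cref{lem:hom-comparison}(ii).
\item It shows the cone has cohomology $k^r$ concentrated in degree $0$, so it is $S_A$ by uniqueness of simple dg modules.
\item It then proves that $\RHom_A(S_A,A)$ is concentrated in degree $d+1$.
\end{itemize}

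Finally, degree-zero morphisms in $\cosg(A)$ differ from those in $\per(A)$ by maps factoring through $\add S_A$. So five-lemma d\'evissage on $\F_A$ does not by itself give full faithfulness. The paper reduces degree zero to degree one by representing $\Sigma^{-1}Y$ by some $Y'\in\F_A$, and then concludes with a thick-subcategory argument.
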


\begin{lem}\label{lem:construct-A}
There exist a locally finite strictly positive dg algebra $A$ and an exact quasi-functor $\pi_{\dg}:\per_{\dg}(A)\to\C_{\dg}$ such that $H^0(\pi_{\dg})(A)\simeq L$ and
\[
H^i(A)\xrightarrow{\sim}\Hom_{\C}(L,\Sigma^iL)\qquad(i\geq d+1).
\]
\end{lem}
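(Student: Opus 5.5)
Start from the dg endomorphism algebra $E:=\REnd_{\C_{\dg}}(L)=\C_{\dg}(L,L)$. Since $\C$ is Hom-finite, $E$ is locally finite. By \cref{dfn:sms}(i) we have $H^0(E)\simeq k^r$, and by \cref{dfn:sms}(ii) we have $H^i(E)=0$ for $d+1\leq i\leq-1$. However, $E$ may have nonzero cohomology in degrees $\leq d$. The idea is to kill this part by a truncation, in two steps. First pass to the smart truncation $\tau^{\geq0}$, which is not multiplicative in general. Then correct this by working with a minimal model.

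Concretely, I would take an $A_\infty$-minimal model $H^*(E)$ with higher products $m_n$, transported via homological perturbation. Choose the idempotents $e_i\in H^0(E)$ to be strict units, which is possible because $k^r$ is separable. The subspace $H^{\geq 1}(E)$ together with $k^r$ is not obviously an $A_\infty$-subalgebra. The quotient by the two-sided $A_\infty$-ideal $H^{\leq d}(E)$, however, is well behaved. Any product with an input in degree $\leq d$ has output degree $\leq d+(\text{positive})+2-n$, which can land in the window $[d+1,-1]$; but that window is zero. Outputs in degree $\geq 0$ are the real danger.

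A cleaner route, which I would actually use, avoids this degree bookkeeping. Use the truncation $\tau^{\geq 0}E\leftarrow\tau^{\leq\bullet}$ trick of Keller--Nicolás type: the subcomplex $E'\subseteq E$ given by $E'^i=E^i$ for $i>0$, $E'^0=Z^0(E)$ and $E'^i=0$ for $i<0$. This is a dg subalgebra, because products of nonnegative-degree cocycles in degree $0$ are cocycles. It satisfies $H^i(E')=H^i(E)$ for $i\geq1$. Its degree-zero cohomology $H^0(E')=Z^0(E)$ maps onto $H^0(E)\simeq k^r$ with kernel $B^0$, which is a differential ideal. Then:
\begin{itemize}
\item Quotient by the dg ideal generated by $B^0=d(E^{-1})$ inside $E'$. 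This ideal is $B^0+ d(\ldots)$; since $E'^{-1}=0$, one should instead use the dg ideal $I=B^0\oplus d(B^0)E'+\cdots$. I expect it is simpler to lift the idempotents, choose a splitting $Z^0(E)=k^r\oplus B^0$ as a $k^r$-bimodule, and replace $E'$ by $A:=k^r\oplus E^{\geq1}_{\mathrm{red}}$, where $E^{\geq1}_{\mathrm{red}}$ is $E^{\geq1}$ modulo the acyclic piece $B^0\oplus d$-preimages.
\item Verify that this $A$ is strictly positive with $A^0=k^r$ and that $A\to E'\to E$ induces isomorphisms on $H^{\geq1}$.
\end{itemize}
I expect this strictification to be the main technical obstacle. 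The issue is producing a genuine dg algebra with $A^0\simeq k^r$ and $d(A^0)=0$, not merely an $A_\infty$ one. If the direct splitting fails, I would fall back on the minimal $A_\infty$ model: set $A$ to be the $A_\infty$-algebra $k^r\oplus H^{\geq1}(E)$, with the restriction of the $m_n$ composed with projection onto degrees $\geq0$. The $A_\infty$-relations hold because every term that is dropped lands in degree $\leq d$ or in the vanishing window $[d+1,-1]$; a degree check shows that no term of degree $\leq -1$ can feed back into degrees $\geq1$ through a single $m_n$ with inputs of degree $\geq 0$ except via $m_n$ with $n\geq 3$ and output degree $\geq 2-n+\sum$, which I would verify carefully. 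Then rectify this $A_\infty$-algebra to a dg algebra via the strict unit and the cobar--bar construction, noting that the augmentation is preserved.

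Given the dg algebra $A$ with a dg (or $A_\infty$) morphism $f:A\to E$ that is an isomorphism on $H^{\geq d+1}$, the functor is formed as follows. Take $\pi_{\dg}:=-\otimes^{\mathbf L}_A L$, where $L$ is regarded as an $A$-$\C_{\dg}$-bimodule through $f$, i.e.\ the restriction of the representable bimodule $\C_{\dg}(-,L)$. This is an exact quasi-functor $\per_{\dg}(A)\to\C_{\dg}$, which lands in $\C_{\dg}$ because $\C_{\dg}$ is pretriangulated and $\C$ is idempotent complete by \cref{rmk:idempotent-complete}. It sends $A$ to $L$. On $\Hom(A,\Sigma^iA)=H^i(A)$, it induces $H^i(f)$, which is an isomorphism for $i\geq d+1$: for $i\geq1$ this holds by construction, for $i=0$ because the idempotents were matched, and for $d+1\leq i\leq -1$ both sides vanish. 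Local finiteness of $A$ follows from that of $E$.
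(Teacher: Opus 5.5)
\textbf{There is a genuine gap.} It sits in the only substantive step, the construction of the strictly positive model, and the route you say you would actually use rests on a false claim.

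In $E'=Z^0(E)\oplus E^{\geq1}$ the differential vanishes on $E'^0=Z^0(E)$, so $H^1(E')=Z^1(E)$. This surjects onto $H^1(E)=Z^1(E)/d(E^0)$ with kernel $d(E^0)$, so $H^1(E')\neq H^1(E)$ whenever $d(E^0)\neq0$.

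Your corrections only target the excess $B^0$ in degree zero and never remove $d(E^0)$ from degree one. For instance, if $E^{-1}=0$ then $B^0=0$, and your recipe returns $E'$ unchanged. Moreover, ``$E^{\geq1}$ modulo $B^0\oplus d$-preimages'' is not well defined, since $B^0\not\subseteq E^{\geq1}$.

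The missing idea is to keep $A^0=\ell$ and cut down degree one instead. The paper proceeds in three steps:
\begin{enumerate}
\item It chooses $L=\bigoplus_iL_i$ as a strict direct sum in a quasi-equivalent enhancement. The summand identities are then closed orthogonal idempotents $e_i\in Z^0(E)$ spanning $\ell\simeq k^r$. This matters because lifting idempotents from $Z^0(E)/B^0$ to $Z^0(E)$ is not automatic for an arbitrary model: $B^0$ need not be nil.
\item It chooses an $\ell$-sub-bimodule $V\subseteq E^1$ with $E^1=d(E^0)\oplus V$, which exists because $\ell^e$ is semisimple.
\item It sets $A=\ell\oplus V\oplus E^{\geq2}$.
\end{enumerate}
This is a dg subalgebra, since $\ell V\ell\subseteq V$, $d(\ell)=0$, and all other products and differentials land in $E^{\geq2}$. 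The inclusion $A\hookrightarrow E$ is an isomorphism on $H^{\geq0}$, because $Z^1(E)=d(E^0)\oplus(Z^1(E)\cap V)$ and $d(E^1)=d(V)$.

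Your $A_\infty$ fallback can be rescued, but not by the argument you give. Discarding outputs of negative degree does not in general preserve the $A_\infty$ relations. An inner output $x\in H^{\leq d}(E)$ feeds back into non-negative degrees through $m_2(x,y)$ as soon as $|y|\geq-|x|$, so your degree check, as stated, is false.

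The correct point is that nothing is ever discarded. In a minimal model strictly unital over $\ell$, which you do arrange, elements of $\ell$ enter only through $m_2$. On inputs of degree $\geq1$, the operation $m_n$ has output degree $\sum_i|a_i|+2-n\geq2$. Hence $\ell\oplus H^{\geq1}(E)$ is an honest $A_\infty$-subalgebra and the projection acts as the identity.

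Its rectification $\Omega_\ell B_\ell\bigl(\ell\oplus H^{\geq1}(E)\bigr)$ is strictly positive with degree-zero part $\ell$. This holds because the generators $s^{-1}[sa_1|\cdots|sa_n]$ have degree $1+\sum_i(|a_i|-1)\geq1$. You should state this explicitly rather than just ``the augmentation is preserved''.

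Compared with this repaired route, the paper's explicit subalgebra avoids homotopy transfer and rectification, and it produces an honest dg morphism $A\to E$. The rest of your argument is fine and agrees with the paper: the functor $-\otimes^{\mathbf L}_AL$, and the comparison in degrees $\geq d+1$ (degree $0$ via the idempotents, the window $[d+1,-1]$ by vanishing on both sides).
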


\begin{proof}
After replacing $\C_{\dg}$ by a quasi-equivalent pretriangulated dg enhancement, choose $L=\bigoplus_iL_i$ as a strict finite direct sum. Put $R=\REnd_{\C_{\dg}}(L)$. Then $H^i(R)\simeq\Hom_{\C}(L,\Sigma^iL)$ and $H^0(R)\simeq k^r$, while the summand identities give closed orthogonal idempotents $e_1,\dots,e_r\in Z^0(R)$. Set $\ell=\bigoplus_i ke_i\simeq k^r$. The image $d(R^0)\subseteq R^1$ is an $\ell$-$\ell$-subbimodule. Since $\ell^e$ is semisimple, choose an $\ell$-$\ell$-subbimodule $V\subseteq R^1$ such that $R^1=d(R^0)\oplus V$.

Define a dg subalgebra $A\subseteq R$ by $A^i=0$ for $i<0$, $A^0=\ell$, $A^1=V$, and $A^i=R^i$ for $i\geq2$. The bimodule condition on $V$ makes $A$ a dg subalgebra. The inclusion $A\hookrightarrow R$ induces an isomorphism on cohomology in every nonnegative degree. For $d+1\leq i<0$, the left-hand side vanishes by construction, while the right-hand side vanishes by \cref{dfn:sms}(ii). This gives the displayed comparison. Since $\C$ is Hom-finite, $A$ is locally finite.
The morphism $A\to R$ gives a quasi-functor $\per_{\dg}(A)\to\per_{\dg}(R)$. The canonical dg Yoneda functor $\per_{\dg}(R)\to\C_{\dg}$ has essential image the dg subcategory split-generated by $L$, whose homotopy category is $\thick_{\C}(L)=\C$ by \cref{dfn:sms}(iii). Hence it is a quasi-equivalence, and composition gives the required quasi-functor $\pi_{\dg}$.
\end{proof}

\begin{lem}\label{lem:hom-comparison}
Let $\pi=H^0(\pi_{\dg})\colon\per(A)\to\C$. The following hold.
\begin{enumerate}[label=\textup{(\roman*)}]
	\item For $P,Q\in\Hcal_A$ and $i\geq d+1$, the canonical map
	\[
	\Hom_{\D(A)}(P,\Sigma^iQ)\longrightarrow\Hom_{\C}(\pi P,\Sigma^i\pi Q)
	\]
	is an isomorphism.
	\item For $X\in\Sigma^{-d-1}\F_A$, $Q\in\Hcal_A$, and $i>0$, the canonical map
	\[
	\Hom_{\D(A)}(X,\Sigma^iQ)\longrightarrow\Hom_{\C}(\pi X,\Sigma^i\pi Q)
	\]
	is an isomorphism.
	\item For $P\in\Hcal_A$, $Y\in\F_A$, and $i>0$, the canonical map
	\[
	\Hom_{\D(A)}(P,\Sigma^iY)\longrightarrow\Hom_{\C}(\pi P,\Sigma^i\pi Y)
	\]
	is an isomorphism.
\end{enumerate}
\end{lem}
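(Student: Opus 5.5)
Part (i) comes first by d\'evissage, and then (ii) and (iii) are deduced from (i) by inducting on the number of factors in the extension windows and tracking degree ranges; I expect most of the work to sit in the boundary degrees of that induction. For (i), both sides are cohomological in each variable, and the comparison map is natural in $P$ and $Q$, so the five lemma reduces us to the generators. Every object of $\Hcal_A=\Filt(\add A)$ is an iterated extension of summands of $A$; since $A$ is strictly positive with $A^0=\ell$, the indecomposable summands are $e_jA$. For $P=e_jA$ and $Q=e_mA$ the map is
\[
H^i(e_mAe_j)\to\Hom_\C(L_j,\Sigma^iL_m),
\]
which is the $(m,j)$ component of the isomorphism in \cref{lem:construct-A}, valid for $i\geq d+1$. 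The snag is that the five lemma for a triangle $P'\to P\to P''$ needs the comparison in degrees $i$ and $i\pm1$. So I will prove the stronger claim ``isomorphism for all $i\geq d+1$ and monomorphism for $i=d$'' (at $i=d$ the left side $H^d(A)$ vanishes, since $d<0$), and run the four/five lemma with that. Induction on extension length in $P$, then in $Q$, then gives (i).

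For (iii), write $Y\in\F_A=\Hcal_A*\Sigma^{-1}\Hcal_A*\cdots*\Sigma^{d+1}\Hcal_A$. I will induct on the number of factors, using triangles $Y'\to Y\to\Sigma^{j}Q\to$ with $Q\in\Hcal_A$ and $d+1\leq j\leq0$. For $i>0$ the term $\Hom(P,\Sigma^{i+j}Q)$ has $i+j\geq d+2$, so it lies in the range of (i). The neighbouring degrees $i+j-1\geq d+1$ are also covered, and at the edge $i=1$ the strengthened version of (i) supplies the monomorphism needed. The five lemma then propagates the isomorphism.

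For (ii), $X\in\Sigma^{-d-1}\F_A=\Sigma^{-d-1}\Hcal_A*\cdots*\Hcal_A$, so its factors are $\Sigma^{s}P$ with $0\leq s\leq -d-1$. Hence $\Hom(\Sigma^sP,\Sigma^iQ)=\Hom(P,\Sigma^{i-s}Q)$ with $i-s\geq 1-(-d-1)=d+2$, which again lies in the range of (i) with one degree of slack, and the same five-lemma induction applies. The main obstacle is this degree bookkeeping at the boundary $i-s=d+1$ or $d+2$, where the neighbouring term drops to degree $d$. There the left side vanishes but the right side $\Hom_\C(L,\Sigma^dL)$ need not, so only injectivity is available. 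I will check that the long exact sequences only ever require the monomorphism in that extreme degree, which is exactly what the strengthened (i) provides. If that fails, the fallback is to use \cref{dfn:sms}(ii) to show that both sides vanish in the degrees $d+1\leq\cdot<0$ directly.
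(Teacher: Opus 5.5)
\textbf{Gap in your main argument for (i).} The device you build (i) on does not work. For a triangle $P'\to P\to P''\to\Sigma P'$, or $Q'\to Q\to Q''\to\Sigma Q'$ in the second variable, the five-lemma diagram has its outer terms in degrees $i-1$ and $i+1$. To conclude that the degree-$i$ comparison is an isomorphism, you need the degree-$(i-1)$ comparison to be \emph{surjective} and the degree-$(i+1)$ comparison to be injective.

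At $i=d+1$ this asks for surjectivity of
\[
0=\Hom_{\D(A)}(P',\Sigma^dQ)\longrightarrow\Hom_{\C}(\pi P',\Sigma^d\pi Q).
\]
This is false in general. For example, when $\C$ is $d$-Calabi--Yau and $P'=Q=A$, the target is $\Hom_\C(L,\Sigma^dL)\simeq\DD\End_\C(L)\neq0$. The injectivity at $i=d$ that you build into your strengthened claim is automatic, but it is never used, because a degree-$d$ term only occurs in the slot that must be surjective. So the check you announce (``the long exact sequences only ever require the monomorphism in that extreme degree'') would come out negative.

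\textbf{Your fallback is the right argument, and it is the paper's.} For $d+1\le i<0$, both sides vanish for all $P,Q\in\Hcal_A$. The left side vanishes by positivity of $A$ and d\'evissage. The right side vanishes by \cref{dfn:sms}(ii) and d\'evissage in $\Hcal_\Lcal$. For $i\ge0$, the double induction then goes through, because the lower neighbour has degree $i-1\ge-1\ge d+1$ and is already settled; this uses $d\le-2$. So the fallback is not optional: it is the argument.

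\textbf{Parts (ii) and (iii).} Once (i) is in hand, these go through as you outline, and the degree-$d$ boundary you worry about never occurs there.
\begin{itemize}
\item In (iii), peel the quotient-end factor $\Sigma^aQ$ of $Y$, with $d+1\le a\le0$. The peeled comparisons are in degrees $i+a-1$ and $i+a$, both $\ge d+1$ for $i>0$.
\item In (ii), peel the sub-end factor $\Sigma^aP$ of $X$, with $0\le a\le-d-1$, via $\Sigma^aP\to X\to X'\to\Sigma^{a+1}P$. The peeled comparisons are in degrees $i-a-1$ and $i-a$, again $\ge d+1$.
\item In both cases the remaining piece appears only in degrees $i$ and $i+1$, both $>0$, so the induction hypothesis covers it.
\end{itemize}
Your appeal to a ``strengthened (i)'' at the edge $i=1$ is therefore unnecessary. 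In (ii), peel from the sub end: peeling the $\Hcal_A$-factor off the quotient end produces a degree-$0$ term for the remaining piece at $i=1$, which the induction hypothesis does not cover.
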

\begin{proof}
For $P,Q\in\add A$, part (i) is \cref{lem:construct-A}. For $d+1\leq i<0$, both sides vanish for arbitrary $P,Q\in\Hcal_A$: on the left by a finite d\'evissage from the positivity of $A$, and on the right by \cref{dfn:sms}(ii). For $i\geq0$, a double induction on the extension lengths of $P$ and $Q$, using the long exact Hom sequences, proves (i). At $i=0$, the adjacent degree $-1$ term is covered by the preceding vanishing.

For (ii), we induct on the number of extension layers of $X$ in
\[
\Sigma^{-d-1}\F_A
=
\Sigma^{-d-1}\Hcal_A*\Sigma^{-d-2}\Hcal_A*\cdots*\Hcal_A.
\]
For a single layer, write $X=\Sigma^aP$ with $P\in\Hcal_A$ and $0\leq a\leq-d-1$. Then the required comparison in degree $i>0$ is part (i) in degree $i-a\geq d+2$. For the induction step, peel off the leftmost layer and write $X$ in a triangle
\[
\Sigma^aP\longrightarrow X\longrightarrow X'\longrightarrow\Sigma^{a+1}P,
\]
where $0\leq a\leq-d-1$ and $X'$ has fewer layers. Applying $\Hom(-,\Sigma^iQ)$ gives the exact sequence
\[
\begin{aligned}
\Hom(\Sigma^{a+1}P,\Sigma^iQ)&\longrightarrow\Hom(X',\Sigma^iQ)\longrightarrow\Hom(X,\Sigma^iQ)\\
&\longrightarrow\Hom(\Sigma^aP,\Sigma^iQ)\longrightarrow\Hom(X',\Sigma^{i+1}Q),
\end{aligned}
\]
and the same sequence after applying $\pi$. The comparison maps for the peeled factor are isomorphisms by part (i) in degrees $i-a-1$ and $i-a$, both at least $d+1$, while the two maps involving $X'$ are isomorphisms by the induction hypothesis. Hence the middle comparison is an isomorphism.

Part (iii) follows by the same argument in the second variable. Namely, induct on the extension layers of $Y\in\F_A$ and peel off a factor $\Sigma^aQ$ from the quotient end, with $d+1\leq a\leq0$. The two adjacent comparisons for the peeled factor occur in degrees $i+a-1$ and $i+a$, which are again at least $d+1$ for $i>0$; part (i) and the induction hypothesis therefore give the result.
\end{proof}

\begin{lem}\label{lem:density}
For every $a\in\mathbb Z$ and every $d+1\leq j\leq0$, the functor
\[
\pi:\Sigma^a\F^j(\Hcal_A)\longrightarrow\Sigma^a\F^j(\Hcal_{\Lcal})
\]
is dense.
\end{lem}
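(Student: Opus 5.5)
Since $\pi$ commutes with $\Sigma$, it suffices to treat $a=0$. We argue by downward induction on $j$, from $j=0$ to $j=d+1$.

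\emph{Base case $j=0$.} We show that $\pi\colon\Hcal_A\to\Hcal_{\Lcal}$ is dense by induction on the length of $X\in\Hcal_\Lcal=\Filt(\Lcal)$. Since $\pi(A)\simeq L$ and $\Hcal_\Lcal$ is summand-closed, each $L_i$ is the image of the indecomposable summand $e_iA$ of $A$. If $X\in\Hcal_\Lcal$ has length $>1$, choose a triangle $L_i\to X\to X'\xrightarrow{\eta}\Sigma L_i$ with $X'$ of smaller length. By induction $X'\simeq\pi P'$ with $P'\in\Hcal_A$. By \cref{lem:hom-comparison}(i) with $i=1\geq d+1$, the map
\[
\Hom_{\D(A)}(P',\Sigma e_iA)\to\Hom_\C(\pi P',\Sigma\pi(e_iA))
\]
is surjective, so $\eta$ lifts to some $\tilde\eta$. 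Put $P=\Sigma^{-1}\mathrm{cone}(\tilde\eta)$. Then $P\in e_iA*P'\subseteq\Hcal_A$, and exactness of $\pi$ gives $\pi P\simeq X$.

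\emph{Induction step.} Assume density for $\F^{j+1}$, and let $j<0$. An object $X\in\F^j(\Hcal_\Lcal)=\F^{j+1}(\Hcal_\Lcal)*\Sigma^j\Hcal_\Lcal$ sits in a triangle $U\to X\to\Sigma^jH\xrightarrow{\eta}\Sigma U$ with $U\in\F^{j+1}(\Hcal_\Lcal)$ and $H\in\Hcal_\Lcal$. By induction and the base case, $U\simeq\pi Y$ with $Y\in\F^{j+1}(\Hcal_A)$, and $H\simeq\pi P$ with $P\in\Hcal_A$. We then need to lift
\[
\eta\in\Hom_\C(\pi\Sigma^jP,\Sigma\pi Y)\simeq\Hom_\C(\pi P,\Sigma^{1-j}\pi Y).
\]
Since $1-j>0$ and $Y\in\F^{j+1}(\Hcal_A)\subseteq\F_A$, \cref{lem:hom-comparison}(iii) shows that this map is surjective (indeed bijective) from $\Hom_{\D(A)}(P,\Sigma^{1-j}Y)$. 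The cocone of the lift lies in $\F^{j+1}(\Hcal_A)*\Sigma^j\Hcal_A=\F^j(\Hcal_A)$ and maps to $X$ under $\pi$.

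\emph{Main obstacle.} The only delicate point is bookkeeping: the membership $\F^{j+1}(\Hcal_A)\subseteq\F_A$ must hold so that \cref{lem:hom-comparison}(iii) applies, and the shift degree must be positive. Both are guaranteed by $d+1\leq j<0$. We also use that $\pi$ is exact, so that it sends the triangle built from the lifted morphism to a triangle isomorphic to the given one.
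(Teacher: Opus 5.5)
Your proof is correct and is essentially the paper's argument: the same base case, lifting extensions inside $\Hcal_A$ via \cref{lem:hom-comparison}(i) in degree $1$, and the same downward induction on $j$, lifting the connecting morphism through \cref{lem:hom-comparison}(iii) in degree $1-j>0$ using $\F^{j+1}(\Hcal_A)\subseteq\F_A$. One small remark: the identification $\pi(e_iA)\simeq L_i$ comes directly from the construction in \cref{lem:construct-A}, where the $e_i$ are the summand idempotents of $L$, and not from the fact that $\Hcal_{\Lcal}$ is closed under summands.
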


\begin{proof}
    By shifting, it is enough to treat $a=0$. First, $\pi:\Hcal_A\to\Hcal_{\Lcal}$ is dense. Indeed, each simple $L_i$ is the image of $e_iA$, and an induction on the length in $\Hcal_{\Lcal}$ lifts extensions by \cref{lem:hom-comparison}(i) with $i=1$.

    We now induct on the number of factors in the extension window. Let $X\in\F^j(\Hcal_{\Lcal})$. By definition, $X$ occurs in a triangle
    \[
    U_0\longrightarrow X\longrightarrow V_0\longrightarrow\Sigma U_0
    \]
    with $U_0\in\F^{j+1}(\Hcal_{\Lcal})$ and $V_0\in\Sigma^j\Hcal_{\Lcal}$. By the induction hypothesis, choose lifts $U\in\F^{j+1}(\Hcal_A)$ and $V\in\Sigma^j\Hcal_A$ with $\pi(U)\simeq U_0$ and $\pi(V)\simeq V_0$. After these identifications, write the triangle as
    \[
    \pi(U)\longrightarrow X\longrightarrow \pi(V)\xrightarrow{\xi}\Sigma\pi(U).
    \]
    Writing $V=\Sigma^jP$ with $P\in\Hcal_A$, the connecting morphism corresponds to an element of
    \[
    \Hom_{\C}(\pi P,\Sigma^{1-j}\pi U).
    \]
    Since $1-j>0$ and $U\in\F_A$, \cref{lem:hom-comparison}(iii) lifts this element to a morphism $V\to\Sigma U$. Complete the lifted morphism to a triangle
    \[
    U\longrightarrow W\longrightarrow V\longrightarrow\Sigma U.
    \]
    Then $W\in\F^j(\Hcal_A)$ and $\pi(W)\simeq X$.
\end{proof}

\begin{lem}\label{lem:construct-SA}
The canonical semisimple module satisfies
\[
S_A\in\F_A,\qquad \pi(S_A)=0.
\]
\end{lem}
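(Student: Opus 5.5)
The plan is to realize $S_A$ through the standard triangle
\[
A^{\geq1}\longrightarrow A\longrightarrow S_A\longrightarrow\Sigma A^{\geq1}
\]
in $\D(A)$. The strategy is to build, inside the extension windows of $\Hcal_A$, an object $Y$ and a morphism $f\colon A\to Y$ such that $f$ plays the role of $A\to\Sigma(\Sigma^{-1}\text{-shifted }A^{\geq1})$, and such that $\pi(f)$ is an isomorphism in $\C$. Then the cocone of $f$ (equivalently, the relevant cone after rotation) is simultaneously $S_A$, lies in $\F_A$, and is killed by $\pi$.

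Concretely, I would proceed in three steps.

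\emph{Step 1: an object of $\C$ in the window.} First I would use \cref{dfn:sms}(iii) together with the injective envelope $L\mono I$ of \cref{lem:injectives} (iterated into a finite injective coresolution inside $\Hcal_\Lcal$). The aim is to express $L$, up to the shifts available, as an object of the shifted window
\[
\Sigma^{-1}\Hcal_\Lcal*\cdots*\Sigma^{d+1}\Hcal_\Lcal .
\]
The key point is that $I\in\Hcal_\Lcal\cap(\Sigma^{-2}\Hcal_\Lcal*\cdots*\Sigma^{d}\Hcal_\Lcal)$. Splicing the triangle $L\to I\to H\to\Sigma L$ with the window description of $I$, and using the orthogonality from \cref{dfn:sms}(ii) to reorder factors, should yield a presentation of the required form.

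\emph{Step 2: lifting.} By \cref{lem:density}, this presentation lifts to an object $Y$ in the corresponding window built from $\Hcal_A$, with $\pi(Y)\simeq L$. By \cref{lem:hom-comparison}(iii) in positive degrees (and (i) in the remaining degrees), the isomorphism $L\simeq\pi(Y)$ lifts to a morphism $f\colon A\to Y$. Let $X$ be the cocone of $f$. Then $\pi(X)=0$, since $\pi(f)$ is an isomorphism. Moreover, reading off the triangle $\Sigma^{-1}Y\to X\to A$ shows that $X$ lies in an extension product of $\Hcal_A$ and shifted window pieces, which reorganizes into $\F_A$ by the same vanishing used in \cref{lem:sms-window-summands}.

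\emph{Step 3: identifying $X$ with $S_A$.} I would compute $H^i(X)=\Hom_{\D(A)}(A,\Sigma^iX)$ from the long exact sequence of $f$.
\begin{itemize}
\item For $i\geq1$, \cref{lem:hom-comparison} identifies $H^i(Y)$ with $\Hom_\C(L,\Sigma^iL)\simeq H^i(A)$ compatibly with $f$, so these terms cancel.
\item In degrees $\leq0$, positivity of $A$ together with the window position of $Y$ gives $H^{\leq0}(Y)=0$. Hence $H^*(X)$ is $\ell$ concentrated in degree $0$.
\end{itemize}
A dg module over a strictly positive $A$ with cohomology $\ell$ in degree $0$ is isomorphic to $S_A$: it lies in the heart of the Keller--Nicol\'as $t$-structure, whose simple objects are the summands of $S_A$. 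Since $H^0(X)\simeq\ell$ as an $\ell$-module, this gives $X\simeq S_A$.

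The main obstacle is Step 1 together with the degree-$\leq0$ vanishing in Step 3. One must choose the presentation of $L$ so that $Y$ sits strictly in the shifted window; only then is $H^0(Y)=0$, even though $\pi(Y)\simeq L$ has nonzero endomorphisms. Simultaneously, the resulting cocone must land in $\F_A$ rather than in a window one step too long. If the direct splicing argument fails to give the right window, I would instead build $Y$ inductively by peeling off the injective coresolution of $L$ one layer at a time. At each stage I would control the negative cohomology by \cref{lem:hom-comparison}(ii), and at the last stage use the length bound $-d-1$ of the window to terminate.
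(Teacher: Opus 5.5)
There is a genuine gap, and it already appears in Step~1. No nonzero object of $\Hcal_\Lcal$ lies in $\Sigma^{-1}\Hcal_\Lcal*\cdots*\Sigma^{d+1}\Hcal_\Lcal$. By \cref{dfn:sms}(ii) and d\'evissage, $\Hom_\C(L,Z)=0$ for every $Z$ in that window, so $L\simeq Z$ would give $\id_L=0$. Since $\pi$ sends $\Sigma^{-j}\Hcal_A$ into $\Sigma^{-j}\Hcal_\Lcal$, no $Y$ in the corresponding window of $\Hcal_A$ can satisfy $\pi(Y)\simeq L$. The tension you flag at the end (``$H^0(Y)=0$ even though $\pi(Y)\simeq L$ has nonzero endomorphisms'') is therefore an outright contradiction.

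Step~3 fails for an independent reason. Suppose $H^{\leq0}(Y)=0$ and $f$ induced isomorphisms on $H^{\geq1}$. Then the long exact sequence makes $X\to A$ an isomorphism on $H^0$. Composing with the augmentation $A\to S_A$ then exhibits $X\simeq S_A$ as a direct summand of $A$, forcing $A\simeq S_A$. This is absurd, since $H^{>0}(A)\neq0$. Once $d$-stability is known, one also has $\Hom_{\D(A)}(S_A,A)=0$, so any triangle $S_A\to A\to Y\to\Sigma S_A$ forces $Y\simeq A\oplus\Sigma S_A$; a cocone of a map out of $A$ cannot produce $S_A$ nontrivially.

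Lengthening the window to end at $\Sigma^{d}\Hcal_\Lcal$, where $L$ does live via $L\to I\to H\to\Sigma L$, does not rescue the argument. The cocone of $f\colon A\to Y$ lies in $\Sigma^{-1}Y*\add A$. There the $\Hcal_A$-layer sits on the quotient side, and there are extra layers $\Sigma^{d}\Hcal_A$ and $\Sigma^{d-1}\Hcal_A$. No reordering is available, because the connecting morphism of that triangle is $f\neq0$.

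The missing idea is to work on the injective side and take the cone of a map \emph{into} an object of $\Hcal_A$. The paper lifts the injective envelope $I$ of \cref{lem:injectives} twice:
\begin{itemize}
\item as $\widetilde I\in\Hcal_A$;
\item using $I\in\Sigma^{-2}\Hcal_\Lcal*\cdots*\Sigma^{d}\Hcal_\Lcal$, as $\widetilde X\in\Sigma^{-2}\F^{d+2}(\Hcal_A)$.
\end{itemize}
Since $\Sigma^{-d}\widetilde X\in\Sigma^{-d-1}\F_A$, \cref{lem:hom-comparison}(ii) with $i=-d$ lifts $\pi\widetilde X\simeq\pi\widetilde I$ to a morphism $f\colon\widetilde X\to\widetilde I$. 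Its cone $S$ satisfies $\pi(S)=0$ and lies in $\widetilde I*\Sigma\widetilde X\subseteq\Hcal_A*\Sigma^{-1}\Hcal_A*\cdots*\Sigma^{d+1}\Hcal_A=\F_A$, with the layers in the correct order.

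The cohomology of $S$ is then computed as follows. $H^{<0}(S)=0$ because $S$ lies in the window. $H^{>0}(S)=0$ by \cref{lem:hom-comparison}(iii). Because $\widetilde X$ has cohomology only in degrees $\geq2$, $H^0(S)\simeq\Hom_{\D(A)}(A,\widetilde I)\simeq\Hom_\C(L,I)\simeq k^r$. So the degree-zero cohomology comes from the socle of $I$, not from $H^0(A)$. From there, your final identification of a module with cohomology $k^r$ in degree $0$ as $S_A$ matches the paper's use of uniqueness of simple dg modules over positive dg algebras.
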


\begin{proof}
    Let $L\mono I$ be the minimal injective envelope from \cref{lem:injectives}. By \cref{lem:density}, choose $\widetilde I\in\Hcal_A$ with $\pi(\widetilde I)\simeq I$. Since
    \[
    I\in\Sigma^{-2}\F^{d+2}(\Hcal_{\Lcal}),
    \]
    the shifted form of \cref{lem:density} gives
    \[
    \widetilde X\in\Sigma^{-2}\F^{d+2}(\Hcal_A),
    \qquad \pi(\widetilde X)\simeq I.
    \]
    Since
    \[
    \Sigma^{-d}\widetilde X
    \in\Sigma^{-d-2}\F^{d+2}(\Hcal_A)
    \subseteq\Sigma^{-d-1}\F_A,
    \]
    \cref{lem:hom-comparison}(ii), applied with $i=-d>0$, gives directly
    \[
    \Hom_{\D(A)}(\widetilde X,\widetilde I)\xrightarrow{\sim}\Hom_{\C}(\pi\widetilde X,\pi\widetilde I).
    \]
    Thus the isomorphism $\pi(\widetilde X)\simeq\pi(\widetilde I)$ lifts to a morphism $f:\widetilde X\to\widetilde I$. Complete it to a triangle
    \[
    \widetilde X\xto{f}\widetilde I\longrightarrow S\longrightarrow\Sigma\widetilde X.
    \]
    Then $\pi(S)=0$ and $S\in\F_A$, so $H^{<0}(S)=0$.
    
    By \cref{lem:hom-comparison}(iii), $\Hom_{\D(A)}(A,\Sigma^iS)=0$ for every $i>0$, hence $H^{>0}(S)=0$. Thus $S$ has cohomology only in degree zero. Since $\widetilde X\in\Sigma^{-2}\F^{d+2}(\Hcal_A)$, positivity gives $\Hom_{\D(A)}(A,\widetilde X)=0=\Hom_{\D(A)}(A,\Sigma\widetilde X)$. Applying $\Hom_{\D(A)}(A,-)$ to the defining triangle and using \cref{lem:hom-comparison}(i) yields
    \[
    H^0(S)\simeq\Hom_{\D(A)}(A,\widetilde I)\simeq\Hom_{\C}(L,I)\simeq k^r
    \]
    as right $H^0(A)=k^r$-modules. The uniqueness of simple dg modules over positive dg algebras \cite[Corollary~5.7]{KN13}, applied componentwise, gives $S\simeq S_A$.
    \end{proof}

\begin{lem}\label{lem:dual-concentration}
    The canonical semisimple module satisfies
    \[
    H^i\RHom_A(S_A,A)=0\qquad(i\neq d+1).
    \]
\end{lem}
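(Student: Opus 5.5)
We need $H^i\RHom_A(S_A,A)=\Hom_{\D(A)}(S_A,\Sigma^iA)=0$ for $i\neq d+1$. The plan is to use the triangle from the proof of \cref{lem:construct-SA},
\[
\widetilde X\xto{f}\widetilde I\longrightarrow S_A\longrightarrow\Sigma\widetilde X,
\]
with $\widetilde I\in\Hcal_A$ and $\widetilde X\in\Sigma^{-2}\F^{d+2}(\Hcal_A)$. We apply $\Hom_{\D(A)}(-,\Sigma^iA)$ and compare with $\C$ through $\pi$, exploiting $\pi(S_A)=0$.

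\emph{Degrees $i\le d$.} The objects $S_A$ and $A$ both lie in $\F_A$ with bounded cohomological amplitude. Since $A$ is strictly positive, $\Sigma^iA$ with $i\le d$ is supported in degrees $\ge -i\ge -d$. The layers of $S_A$ lie in $\Sigma^{j}\Hcal_A$ for $d+1\le j\le0$. A finite dévissage using $\Hom_{\D(A)}(\Sigma^jP,\Sigma^iQ)=H^{i-j}$ of a module in $\Hcal_A$, which vanishes when $i-j<0$, gives $\Hom(S_A,\Sigma^iA)=0$ for $i<d+1$.

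\emph{Degrees $i>d+1$.} Here I would invoke \cref{lem:hom-comparison}(ii) rather than the dévissage. Write $\Sigma^{-d-1}S_A\in\Sigma^{-d-1}\F_A$ and $\Hom(S_A,\Sigma^iA)=\Hom(\Sigma^{-d-1}S_A,\Sigma^{i-d-1}A)$ with $i-d-1>0$. Taking $Q=A\in\Hcal_A$, \cref{lem:hom-comparison}(ii) identifies this with $\Hom_\C(\pi\Sigma^{-d-1}S_A,\Sigma^{i-d-1}L)=0$, because $\pi(S_A)=0$. This settles every $i>d+1$ at once.

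\emph{Degree $i=d+1$ and the main obstacle.} The statement asserts nothing in degree $d+1$, so no further work is needed there; the only nontrivial check is the range $i\le d$. The main obstacle is making the positivity argument precise: layers in $\Sigma^{j}\Hcal_A$ with $j$ down to $d+1$ map into $\Sigma^iA$ only when $i\ge j$. One needs $\Hom_{\D(A)}(P,\Sigma^mQ)=0$ for $P,Q\in\Hcal_A$ and $m<0$, which holds by positivity as noted in the proof of \cref{lem:hom-comparison}(i). Applying it to each layer $\Sigma^jP$ of $S_A$ with $m=i-j<0$, which holds exactly when $i\le d<j$, and combining via the long exact sequences along the filtration of $S_A\in\F_A$ from \cref{lem:construct-SA}, kills all degrees $i\le d$.
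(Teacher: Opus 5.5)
Your proposal is correct and follows the paper's proof. The range $i\le d$ is the same positivity d\'evissage along the filtration $S_A\in\F_A$; the paper phrases it as $\RHom_A(S_A,A)\in\Sigma^{-d-1}\Hcal_{A^{\mathrm{o}}}*\cdots*\Hcal_{A^{\mathrm{o}}}\subseteq\D(A^{\mathrm{o}})^{\geq d+1}$. The range $i>d+1$ is exactly \cref{lem:hom-comparison}(ii) applied to $\Sigma^{-d-1}S_A$ and $Q=A$, together with $\pi(S_A)=0$.

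Two small points. The triangle $\widetilde X\to\widetilde I\to S_A$ that you announce at the start is never actually used; you only need $S_A\in\F_A$ and $\pi(S_A)=0$. Your aside that $A$ has bounded cohomological amplitude is false in general (see \cref{exa:cyclic-family}), but it plays no role in the argument.
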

\begin{proof}
    By \cref{lem:construct-SA}, the object $S_A$ lies in the fundamental extension window. Applying $\RHom_A(-,A)$ gives
    \[
    \RHom_A(S_A,A)\in\Sigma^{-d-1}\Hcal_{A^{\mathrm{o}}}*\Sigma^{-d-2}\Hcal_{A^{\mathrm{o}}}*\cdots*\Hcal_{A^{\mathrm{o}}}\subseteq\D(A^{\mathrm{o}})^{\geq d+1}.
    \]
    Therefore $H^i\RHom_A(S_A,A)=0$ for $i<d+1$.
    
    For the other direction, $\Sigma^{-d-1}S_A\in\Sigma^{-d-1}\Hcal_A*\Sigma^{-d-2}\Hcal_A*\cdots*\Hcal_A$. By \cref{lem:hom-comparison}(ii), for every $j>0$ one has
    \[
    \Hom_{\D(A)}(\Sigma^{-d-1}S_A,\Sigma^jA)\simeq\Hom_{\C}(\Sigma^{-d-1}\pi S_A,\Sigma^jL)=0.
    \]
    This is precisely the vanishing of $H^i\RHom_A(S_A,A)$ for $i>d+1$.
\end{proof}
\begin{proof}[Proof of \cref{thm:realization}]
    The dg algebra $A$ and quasi-functor $\pi_{\dg}$ are given by \cref{lem:construct-A}. By \cref{lem:construct-SA,lem:dual-concentration}, $A$ is $d$-stable. By \eqref{eq:pvd-thick}, one has $\pvd(A)=\thick(S_A)\subseteq\per(A)$, and $\pi_{\dg}$ factors through a quasi-functor $\overline\pi_{\dg}:\cosg_{\dg}(A)\to\C_{\dg}$.
    
    By \cref{prop:fundamental-domain}, every object of $\cosg(A)$ is represented by an object of $\F_A$. Combining the morphism comparison in \cref{prop:fundamental-domain} with \cref{lem:hom-comparison}(iii) gives, for every $Y\in\F_A$, an isomorphism
    \[
    \Hom_{\cosg(A)}(A,\Sigma Y)\xrightarrow{\sim}\Hom_{\C}(L,\Sigma\overline\pi Y).
    \]
    Given $Y\in\F_A$, choose $Y'\in\F_A$ representing $\Sigma^{-1}Y$ in $\cosg(A)$. Applying the displayed degree-one comparison to $Y'$ yields
    \[
    \Hom_{\cosg(A)}(A,Y)\xrightarrow{\sim}\Hom_{\C}(L,\overline\pi Y).
    \]
    Let $\mathcal U$ be the full subcategory of objects $X\in\per(A)$ such that
    \[
    \Hom_{\cosg(A)}(X,Z)\longrightarrow\Hom_{\C}(\overline\pi X,\overline\pi Z)
    \]
    is an isomorphism for every $Z\in\cosg(A)$. Long exact Hom sequences and the five lemma show that $\mathcal U$ is triangulated, and it is closed under direct summands; hence $\mathcal U$ is thick. The preceding degree-zero comparison, together with the fundamental-domain density, gives $A\in\mathcal U$. Therefore $\mathcal U=\thick(A)=\per(A)$, so $H^0(\overline\pi_{\dg})$ is fully faithful. Density follows from \cref{lem:density} with $j=d+1$ and \cref{dfn:sms}(iii), so it is a triangle equivalence. Since both dg categories are pretriangulated, $\overline\pi_{\dg}$ is therefore a quasi-equivalence.
\end{proof}

\begin{cor}[Characterization by self-injective dg algebras]\label{cor:CM-characterization}
Let $d\leq-2$, and let $\C$ be a Hom-finite algebraic triangulated category. Then $\C$ admits a $(-d)$-simple-minded system if and only if it is triangle equivalent to $\underline{\CM}(B)$ for some locally finite $(-d)$-self-injective non-positive dg algebra $B$.
\end{cor}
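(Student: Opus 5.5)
The plan is to prove the two implications separately. Both reduce to results already established, namely \cref{thm:realization}, \cref{prop:d-stable-selfinjective}, the Cohen--Macaulay identification \eqref{eq:CM-opposite-identification}, and Jin's theorem \cite[Theorem~5.6]{Jin20}.

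For ($\Rightarrow$), let $\Lcal$ be a $(-d)$-simple-minded system in $\C$. Since $\C$ is algebraic, choose a pretriangulated dg enhancement $\C_{\dg}$. \Cref{thm:realization} then gives a $d$-stable locally finite strictly positive dg algebra $A$ with $A^0\simeq k^r$ and a triangle equivalence $\cosg(A)\simeq\C$ on homotopy categories. Put $B=A^!$. By \cref{thm:Koszul-dual}, $B$ is a locally finite non-positive dg algebra; this needs pvd-finiteness, which follows from $S_A\in\per(A)$ and \cref{rmk:pvd-finite}. By \cref{prop:d-stable-selfinjective}, $B$ is $(-d)$-self-injective and proper. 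The chain \eqref{eq:CM-opposite-identification} then gives
\[
\C\simeq\cosg(A)\simeq\underline{\CM}(B^{\mathrm{o}})^{\mathrm{o}}\simeq\underline{\CM}(B),
\]
and each step is a triangle equivalence, which completes this direction. One point must be checked: the composite with the opposite and with Jin's duality $\RHom_{B^{\mathrm{o}}}(-,B^{\mathrm{o}})$ must be covariant and exact for the triangulated structures. It is, because two contravariant exact dualities compose to a covariant triangle functor.

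For ($\Leftarrow$), let $B$ be locally finite, non-positive and $(-d)$-self-injective, and suppose $\C\simeq\underline{\CM}(B)$. As observed in the proof of \cref{prop:d-stable-selfinjective}, the equality $\add\DD B=\add\Sigma^{d+1}B$ forces $H^i(B)=0$ for $i\notin[d+1,0]$. Hence $B$ is proper and satisfies Jin's standing Assumption~0.1. By \cite[Theorem~5.6]{Jin20}, the simple $H^0(B)$-modules, viewed in $\underline{\CM}(B)$, form a $(-d)$-simple-minded system there. Their images under the triangle equivalence give a $(-d)$-simple-minded system in $\C$, since all three axioms of \cref{dfn:sms} are invariant under triangle equivalence.

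The main obstacle is a convention mismatch rather than a new construction. \Cref{dfn:sms} is written in the extension-window form $\C=\F^{d+1}(\Hcal_\Lcal)$, whereas Jin's $w$-simple-minded systems may be stated via $\thick$ or $\add$-closure of the filtration window, with $w=-d$ and possibly opposite shift direction. I would settle this by citing the comparison stated after \cref{dfn:sms}. If needed, I would also observe that Jin's window is closed under summands by \cref{lem:sms-window-summands}, so the two formulations coincide.
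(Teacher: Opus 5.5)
Your proposal is correct and follows the paper's proof. The forward direction combines \cref{thm:realization} with \cref{prop:d-stable-selfinjective} and the Cohen--Macaulay identification \eqref{eq:CM-opposite-identification} for $B=A^!$, and the converse is \cite[Theorem~5.6]{Jin20} transported along the triangle equivalence. The paper leaves implicit the extra checks you supply: that self-injectivity forces properness of $B$ and hence Jin's standing assumption, that the composite of the two dualities is covariant, and that the simple-minded-system conventions match.
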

\begin{proof}
For the forward implication, let $A$ be the dg algebra given by \cref{thm:realization} and put $B=A^!$. Then \eqref{eq:CM-opposite-identification} gives
\[
\C\simeq\cosg(A)\simeq\underline{\CM}(B).
\]
Conversely, \cite[Theorem~5.6]{Jin20} shows that the simple $H^0(B)$-modules give a $(-d)$-simple-minded system in $\underline{\CM}(B)$.
\end{proof}

\section{Acyclic negative Calabi--Yau categories}\label{sec:acyclic-negative-CY}

Let $d\leq-2$ and let $Q$ be a finite acyclic quiver. Let $S_Q$ be the direct sum of the simple right $kQ$-modules, and set
\[
kQ^!:=\Ext^*_{kQ}(S_Q,S_Q)
\]
with Yoneda multiplication. Since $kQ$ is hereditary, $kQ^!$ is concentrated in degrees $0$ and $1$, and its degree-one extension quiver is $Q$. The orbit category $\C_d(kQ)=\D^b(kQ)/(\Sigma^{-d}\Serre)$ is triangulated and Hom-finite in this hereditary setting; see \cite{Keller05,CoelhoPauksztelloPloog22}. The following is a negative analogue of the recognition theorem of Keller--Reiten \cite{KR08}.

\begin{thm}[Negative Keller--Reiten recognition theorem]\label{thm:negative-KR}
Let $d\leq-3$, and let $\C$ be a Hom-finite algebraic $d$-Calabi--Yau triangulated category with a $(-d)$-simple-minded system $\Lcal=\{L_1,\dots,L_r\}$. Put $L=\bigoplus_iL_i$, and let $Q$ be a finite acyclic quiver. Assume that
\[
\bigoplus_{i=0}^{-d-1}\Hom_{\C}(L,\Sigma^iL)\simeq kQ^!
\]
as graded algebras. Then
\[
\C\simeq\C_d(kQ).
\]
\end{thm}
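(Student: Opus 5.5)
We plan to compute the Koszul dual $B=A^!$ of the realizing algebra and recognize it. First, we apply \cref{thm:realization} to obtain a $d$-stable locally finite strictly positive dg algebra $A$ with $A^0\simeq k^r$ and $\cosg(A)\simeq\C$. By \cref{prop:d-stable-selfinjective}, $B=A^!$ is proper and $(-d)$-self-injective, with $H^i(B)=0$ for $i\notin[d+1,0]$. By \eqref{eq:CM-opposite-identification}, $\C\simeq\underline{\CM}(B)$. The goal is therefore to show that $B$ is quasi-isomorphic to the trivial extension dg algebra $T:=kQ\ltimes\Sigma^{-d-1}\DD(kQ)$. Here $kQ$ sits in degree $0$ and $\DD(kQ)$ in degree $d+1$. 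Jin's singularity-category description then identifies $\underline{\CM}(T)$ with $\D^b(kQ)/(\Sigma^{-d}\Serre)=\C_d(kQ)$; we take this identification for the standard trivial extension as known, since it also shows the standard simple-minded system is one.

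The first step is to compute $H^*(B)$. By \cref{lem:construct-A}, $H^i(A)\simeq\Hom_\C(L,\Sigma^iL)$ for $i\ge d+1$, so the hypothesis says that the truncation $H^{\le -d-1}(A)$ is $kQ^!$. In particular $H^i(A)=0$ for $2\le i\le -d-1$. Since $d\le-3$, this gap contains at least degree $2$. So in the range that determines $H^0$ and low degrees of $B$, $A$ looks like the Koszul algebra $kQ^!$, and the Koszul dual of $kQ^!$ is $kQ$; this gives $H^0(B)\simeq kQ$. To make this precise, we would use an $A_\infty$-minimal model of $A$ whose higher products $m_n$ for $n\ge3$ land in degrees at least $2$. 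Those products then vanish in degrees below $-d$ for degree reasons, so the bar construction computing $H^{\ge d+2}$ of $B$ agrees with that of $kQ^!$.

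Next, $H^{d+1}(B)$ is obtained from self-injectivity. The equality $\add\DD B=\add\Sigma^{d+1}B$ gives $\DD H^0(B)\simeq H^{d+1}(B)$ as $H^0(B)$-modules, twisted by a Nakayama permutation. The $d$-Calabi--Yau hypothesis on $\C$ should force this permutation to be trivial and make the identification bimodular, so that $H^{d+1}(B)\simeq\DD(kQ)$ as a $kQ$-bimodule. We would argue this via the Serre functor on $\underline{\CM}(B)$, which is $\Sigma^d$, acting trivially on the simple-minded system. For the intermediate degrees $d+2\le i\le -1$, we would show $H^i(B)=0$. The idea is that $\Ext$-vanishing of $L$ in degrees $2,\dots,-d-1$ together with the Calabi--Yau duality $\Hom(L,\Sigma^iL)\simeq\DD\Hom(L,\Sigma^{d-i}L)$ kills these cohomologies. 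The product $H^0\otimes H^{d+1}\to H^{d+1}$ is then the bimodule structure, and $H^{d+1}\cdot H^{d+1}=0$ by degree. This yields $H^*(B)\simeq T$ as graded algebras.

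The final step is formality. We would invoke intrinsic formality of $T$: the Hochschild cohomology groups $\HHom^{p,2-p}(T,T)$ obstructing $A_\infty$-structures vanish for $p\ge3$. Since $kQ$ is hereditary and $T$ is concentrated in the two degrees $0$ and $d+1$ with $|d+1|\ge2$, the only possible nonzero higher products $m_p$ would have degree $2-p$ landing in a gap, which forces them to vanish in the relevant range. We expect the main obstacle to be the precise degree bookkeeping in the computation of $H^*(B)$, in particular showing that the Nakayama twist is trivial using the Calabi--Yau property. The case $d=-2$ fails exactly here, because there is no gap; this is consistent with \cref{rmk:d-minus-2}.
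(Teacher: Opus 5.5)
\paragraph{Assessment.} Your architecture is the paper's: realize $\C\simeq\underline{\CM}(B)$, show $B\simeq T:=kQ\ltimes\Sigma^{-d-1}\DD(kQ)$, then apply intrinsic formality and Jin. Your bar-construction computation of the one-sided cohomology of $B$ is a valid alternative to the paper's route through \cref{lem:KR-heart} and the lifted triangle $\Sigma^d\widetilde P\to\widetilde I\to S_A\to\Sigma^{d+1}\widetilde P$.

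It also gives more than you use. Since $H^i(A)=0$ for $2\le i\le-d-1$, every bar letter $sh$ has degree $0$ or at least $-d-1$. So the bar complex of a minimal model vanishes in degrees $1,\dots,-d-2$. This gives $H^0(B)\simeq kQ$ and $H^j(B)=0$ for $d+2\le j\le-1$ at once. Self-injectivity then gives $H^{d+1}(B)\simeq\DD H^0(B)$ as right modules, with no Calabi--Yau input. Your separate Calabi--Yau argument for the intermediate degrees is therefore unnecessary, and as written it does not make sense: $H^j(B)=\Hom_{\D(A)}(S_A,\Sigma^jS_A)$ with $\pi(S_A)=0$ is not a Hom space between copies of $L$ in $\C$.

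\paragraph{The gap: the bimodule structure of $H^{d+1}(B)$.} Self-injectivity only gives $H^{d+1}(B)\simeq{}_\phi\DD(kQ)$ for some algebra automorphism $\phi$ of $kQ$. Your proposed use of the Calabi--Yau hypothesis is that $\Serre\simeq\Sigma^d$ acts trivially on the objects of the simple-minded system. That only shows $\phi$ fixes the vertex idempotents, which is not enough.

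For the Kronecker quiver, take $\phi$ exchanging the two arrows. It fixes both vertices but is outer, since inner automorphisms only rescale each arrow. Hence ${}_\phi\DD(kQ)\not\simeq\DD(kQ)$ as bimodules, and $kQ\ltimes\Sigma^{-d-1}{}_\phi\DD(kQ)\not\simeq T$ as graded algebras. Yet this twisted trivial extension has the same one-sided cohomology and the same objectwise behaviour on the simples. It also passes the same formality argument. Its stable category is a twisted orbit category that is not $d$-Calabi--Yau. So any argument using the Calabi--Yau property only on the objects $L_i$ cannot reach $T$.

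The hypothesis must be used on morphisms. The paper does this in three steps. First it proves \cref{lem:KR-heart}: $I\simeq\Sigma^dP$ with $P$ a projective generator and $\End_\C(P)\simeq kQ$; your route has skipped this step, but it is needed here. Second, it identifies $H^{d+1}(B)\simeq\Hom_\C(P,\Sigma^dP)$ as $\End_\C(P)$-bimodules. Third, it invokes the bifunctoriality $\langle c,(\Sigma^da)\circ x\circ b\rangle_P=\langle bca,x\rangle_P$ of \cref{lem:CY-bimodule}. You need this, or an equivalent argument restricting the natural isomorphism $\Serre\simeq\Sigma^d$ to the heart.

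\paragraph{Smaller imprecision in the formality step.} Degree reasons do not kill the component $\Gamma^{\otimes(1-d)}\to\Sigma^{-d-1}\DD\Gamma$ with all inputs in degree $0$. Its obstruction group is $\operatorname{HH}^{1-d}_\ell(kQ,\DD kQ)$. This vanishes because the tensor algebra has a length-one relative bimodule resolution, and that is where heredity actually enters.
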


\begin{rmk}[The hypothesis]\label{rmk:KR-hypothesis}
Because $kQ^!$ is concentrated in degrees $0$ and $1$, the hypothesis of \cref{thm:negative-KR} is equivalently the conjunction of
\[
\Hom_\C(L,\Sigma^iL)=0\qquad(2\le i\le-d-1)
\]
and the statement that the degree-one extension quiver of $\Hcal_{\Lcal}$ is $Q$. There is no further multiplicative condition: products of two degree-one classes land in degree two and hence vanish.
\end{rmk}

\begin{rmk}[The standard system]\label{rmk:KR-standard-system}
The standard simple-minded system of $\C_d(kQ)$ satisfies the hypothesis of \cref{thm:negative-KR}. Put $q=-d$. The simple $kQ$-modules form a simple-minded collection in $\D^b(kQ)$ contained in the standard fundamental domain, and their images form a $q$-simple-minded system in $\C_d(kQ)$ by \cite[Theorem~4.1]{CoelhoPauksztelloPloog22}. To compute the relevant orbit-category morphisms, note that
\[
\Serre\bigl(\D^{[a,b]}(kQ)\bigr)\subseteq\D^{[a-1,b]}(kQ),
\qquad
\Serre^{-1}\bigl(\D^{[a,b]}(kQ)\bigr)\subseteq\D^{[a,b+1]}(kQ),
\]
because $kQ$ is hereditary. Hence, for $0\leq i\leq q-1$, every orbit summand indexed by $n\neq0$
\[
\Hom_{\D^b(kQ)}\bigl(L,\Sigma^{qn+i}\Serre^nL\bigr)
\]
vanishes by amplitude: for $n\geq1$ the target lies strictly in negative cohomological degrees, whereas for $n\leq-1$ it lies strictly in positive cohomological degrees. Thus only the ordinary derived-category summand contributes, and
\[
\bigoplus_{i=0}^{q-1}\Hom_{\C_d(kQ)}(L,\Sigma^iL)
\simeq\End_{kQ}(L)\oplus\Ext^1_{kQ}(L,L)\simeq kQ^!
\]
as graded algebras.
\end{rmk}

\begin{cor}[Recognition characterization]\label{cor:negative-KR-characterization}
Let $d\leq-3$ and let $\C$ be a Hom-finite algebraic $d$-Calabi--Yau triangulated category. Then $\C\simeq\C_d(kQ)$ if and only if $\C$ admits a $(-d)$-simple-minded system $\Lcal$, with $L=\bigoplus_{X\in\Lcal}X$, such that
\[
\bigoplus_{i=0}^{-d-1}\Hom_{\C}(L,\Sigma^iL)\simeq kQ^!
\]
as graded algebras.
\end{cor}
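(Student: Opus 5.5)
The corollary has two directions, and both should follow quickly from results already in place.

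For the forward implication, assume $\C\simeq\C_d(kQ)$ as triangulated categories. Transport the standard simple-minded system of $\C_d(kQ)$ along this equivalence. By \cref{rmk:KR-standard-system}, via \cite[Theorem~4.1]{CoelhoPauksztelloPloog22}, the images of the simple $kQ$-modules form a $(-d)$-simple-minded system in $\C_d(kQ)$. The same remark computes the graded algebra $\bigoplus_{i=0}^{-d-1}\Hom(L,\Sigma^iL)$ and finds it isomorphic to $kQ^!$. The argument there is an amplitude count: orbit summands with $n\neq0$ vanish because $\Serre^{\pm1}$ widens amplitude by at most one on the hereditary side. A triangle equivalence commutes with $\Sigma$ up to natural isomorphism and preserves composition. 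Hence the transported system $\Lcal$ in $\C$ satisfies axioms (i)--(iii) of \cref{dfn:sms}, and its graded extension algebra in the window $0\le i\le -d-1$ is again isomorphic to $kQ^!$ as a graded algebra. No use of the $d$-Calabi--Yau hypothesis is needed in this direction, beyond its being part of the standing assumptions.

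For the converse, assume $\C$ is Hom-finite, algebraic and $d$-Calabi--Yau, with a $(-d)$-simple-minded system $\Lcal$ whose graded extension algebra in that range is $kQ^!$. These are exactly the hypotheses of \cref{thm:negative-KR} with $d\le-3$, so that theorem gives $\C\simeq\C_d(kQ)$ directly.

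The only point needing care is in the forward direction. One must check that the multiplicative structure, not just the graded vector space, is preserved. The remark identifies the graded pieces with $\End_{kQ}(L)\oplus\Ext^1_{kQ}(L,L)$, and the composition in the orbit category restricted to the $n=0$ summands is the Yoneda product in $\D^b(kQ)$. Products landing in degree two vanish on both sides (\cref{rmk:KR-hypothesis}), so the identification is one of graded algebras. I expect no genuine obstacle: the corollary is a formal combination of \cref{thm:negative-KR} and \cref{rmk:KR-standard-system}.
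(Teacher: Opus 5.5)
Your proposal is correct and takes the same approach as the paper: the forward direction is \cref{rmk:KR-standard-system}, transported along the equivalence, and the converse is \cref{thm:negative-KR}. Your extra check that the identification respects the Yoneda product only spells out what the remark already asserts.
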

\begin{proof}
The forward implication follows from \cref{rmk:KR-standard-system}, and the reverse implication is \cref{thm:negative-KR}.
\end{proof}

For the proof of \cref{thm:negative-KR}, retain its assumptions and put $\Hcal=\Hcal_{\Lcal}$. We divide the proof into two steps. First we identify the heart determined by $\Lcal$.

\begin{lem}\label{lem:KR-heart}
    Under the assumptions of \cref{thm:negative-KR}, let $L\mono I$ be the minimal injective envelope in $\Hcal$ and put $P=\Sigma^{-d}I$. Then $P$ is a basic projective generator of $\Hcal$, the category $\Hcal$ is hereditary, and, after fixing the labelling by the $L_i$, there is an equivalence
    \[
        \Hcal\simeq\modu kQ
    \]
    which sends $P$ to the regular right $kQ$-module.
\end{lem}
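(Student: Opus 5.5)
We use the $d$-Calabi--Yau property to convert statements about the injective envelope $I$ into statements about $P=\Sigma^{-d}I$, and then invoke the standard recognition of $\modu kQ$ for a hereditary length category with a given Ext-quiver.

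\textbf{Step 1: $P\in\Hcal$ and $P$ is projective.} By the $d$-Calabi--Yau property, for $X\in\Hcal$,
\[
\Hom_\C(P,\Sigma X)=\Hom_\C(\Sigma^{-d}I,\Sigma X)\simeq\DD\Hom_\C(\Sigma X,\Sigma^{d}\Sigma^{-d}I)=\DD\Hom_\C(\Sigma X,I)=0,
\]
where the last equality is $\Hom_\C(\Hcal,\Sigma^{-1}\Hcal)=0$ from the proof of \cref{lem:injectives}. Similarly $\Hom_\C(P,X)\simeq\DD\Hom_\C(X,\Sigma^d I)$. Since $I\in\Sigma^{-2}\Hcal*\cdots*\Sigma^d\Hcal$ by \cref{lem:injectives}, the $d$-CY duality and this window let me show that $P$ lies in $\Hcal$, using the hypothesis $\Hom_\C(L,\Sigma^iL)=0$ for $2\le i\le -d-1$.

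I would first show that $\Hom_\C(L,\Sigma^{j}I)=0$ for $j\ne 0,\,-d-1,\dots$ in an appropriate range. Dévissage of $I$ in $\Hcal$ by simples, together with the vanishing hypothesis, gives $\Hom(L,\Sigma^jI)=0$ for $2\le j\le -d-1$, and injectivity gives $j=1$. Translating through Serre duality, this yields $\Hom(\Sigma^{j}P,L)=0$ for $j\neq0$ in the window $d+1\le j\le -1$, together with the relevant positive degrees. The characterization $\Hcal=\{X\in\C : \Hom(L,\Sigma^{<0}X)=0=\Hom(\Sigma^{>0}\ldots)\}$ inside the window $\F^{d+1}(\Hcal)$ then places $P$ in $\Hcal$. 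Concretely, I would peel the layers of the decomposition $P\in\Hcal*\Sigma^{-1}\Hcal*\cdots$ and show that each nonzero-shift layer has vanishing maps from or to $L$, hence vanishes. This membership step is the main obstacle: it is exactly where $d\le-3$ and the gap $2\le i\le-d-1$ are used. For $d=-2$ the window is too short, which is why hereditarity is assumed there.

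\textbf{Step 2: $P$ is a basic projective generator.} From the displayed duality, $\Hom_\C(P,L_i)\simeq\DD\Hom_\C(L_i,I)$. By \cref{lem:injectives} this is one-dimensional for each $i$, so $P$ is a basic projective cover of $L$. In a length abelian category, a projective object mapping onto every simple is a progenerator. Hence
\[
\Hcal\simeq\modu\End_\C(P)^{\mathrm{o}}
\]
or the right-module version, matching the convention so that $P$ goes to the regular module.

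\textbf{Step 3: $\Hcal$ is hereditary and $\End(P)\simeq kQ$.} Compute $\Ext^2_\Hcal(L,L)$. Since $\Hcal$ has enough projectives, $\Ext^2_\Hcal$ embeds into $\Hom_\C(L,\Sigma^2L)$ by the standard comparison for hearts of length $\ge 2$ windows. This group is zero by hypothesis because $2\le-d-1$, so every simple has projective dimension $\le1$ and $\Hcal$ is hereditary. Then $\Lambda=\End(P)$ is a basic finite-dimensional hereditary algebra over an algebraically closed field. Its Gabriel quiver is read off from $\Ext^1_\Hcal(L_i,L_j)=\Hom_\C(L_i,\Sigma L_j)$, which is given by $Q$ by hypothesis. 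Finite length of $\Hcal$ and acyclicity of $Q$ make $\Lambda$ finite-dimensional, and Gabriel's theorem for hereditary algebras gives $\Lambda\simeq kQ$ compatibly with the labelling.
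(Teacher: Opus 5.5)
Your outline matches the paper's. Calabi--Yau duality gives projectivity of $P$ and $\Hom_\C(P,L_i)\simeq\DD\Hom_\C(L_i,I)\simeq k$. Hereditarity comes from $\Hom_\C(\Hcal,\Sigma^2\Hcal)=0$; your injection $\Ext^2_\Hcal\hookrightarrow\Hom_\C(-,\Sigma^2-)$ is what the paper makes explicit by showing that $\Ext^1_\Hcal(X,-)$ is right exact. A basic hereditary algebra with Ext quiver $Q$ is then $kQ$.

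\textbf{The gap is Step~1.} You rightly call the membership $P\in\Hcal$ the crux, but you do not prove it. The mechanism you state is that each nonzero-shift layer has vanishing maps to or from $L$, ``hence vanishes''. That is false: the layers of a given decomposition need not be zero. For example, once the lemma is known, the split triangle $P\oplus I\to P\xrightarrow{0}\Sigma^{d+1}P\to\Sigma(P\oplus I)$ places $P$ in $\Hcal*\Sigma^{d+1}\Hcal$ with a nonzero last layer. The characterization of $\Hcal$ you invoke is only stated schematically. There are also two slips: the range you need is $\Hom_\C(P,\Sigma^m L)=0$ for $d+1\le m\le -1$, not the one you wrote, and $\Hom_\C(P,X)\simeq\DD\Hom_\C(X,I)$, not $\DD\Hom_\C(X,\Sigma^dI)$.

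\textbf{How to repair it.} The missing ingredient is splitting plus summand-closure. Your vanishing $\Hom_\C(L,\Sigma^jI)=0$ for $1\le j\le-d-1$ gives, via Calabi--Yau duality, $\Hom_\C(P,\Sigma^mX)\simeq\DD\Hom_\C(X,\Sigma^{-m}I)=0$ for $X\in\Hcal$ and $d+1\le m\le-1$. Write $P$ in a triangle $U\to P\to W\to\Sigma U$ with $U\in\Hcal$ and $W\in\Sigma^{-1}\Hcal*\cdots*\Sigma^{d+1}\Hcal$. By d\'evissage the map $P\to W$ is zero, so $U\simeq P\oplus\Sigma^{-1}W$. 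Then $P\in\Hcal$ by \cref{lem:sms-window-summands} with $j=0$.

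\textbf{Comparison with the paper.} With this repair your route is a valid alternative. The paper avoids Serre duality at this stage. It uses the refined window $I\in\Sigma^{-2}\Hcal*\cdots*\Sigma^d\Hcal$ from \cref{lem:injectives} and kills the maps from the layers $\Sigma^{-j}\Hcal$, $2\le j\le-d-1$, into $I$. Thus $I$ becomes a summand of the shorter window, and it concludes $I\in\Sigma^d\Hcal$ by \cref{lem:sms-window-summands}.

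\textbf{Where $d\le-3$ is used.} Your remark that membership is where $d\le-3$ is needed is mistaken. For $d=-2$, \cref{lem:injectives} already gives $I\in\Sigma^{-2}\Hcal=\Sigma^d\Hcal$, and your repaired route uses only injectivity there. The hypothesis $d\le-3$ enters only through $\Hom_\C(\Hcal,\Sigma^2\Hcal)=0$ in the hereditarity step, as recorded in \cref{rmk:d-minus-2}.
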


\begin{proof}
    Combining \cref{dfn:sms}(ii) with the fact that $kQ^!$ is concentrated in degrees zero and one, finite d\'evissage in both variables gives the single vanishing range
    \[
        \Hom_{\C}(\Hcal,\Sigma^i\Hcal)=0
        \qquad
        (d+1\leq i\leq-d-1,\ i\neq0,1).
    \]
    By \cref{lem:injectives},
    \[
        I\in\Hcal\cap(\Sigma^{-2}\Hcal*\cdots*\Sigma^d\Hcal).
    \]
    Since $\Hom_{\C}(\Sigma^{-j}\Hcal,I)=0$ for $2\leq j\leq-d-1$, we may successively remove the factors $\Sigma^{-2}\Hcal,\ldots,\Sigma^{d+1}\Hcal$: in a triangle $U\to I\to V\to\Sigma U$ with $U$ in the layer being removed, the first map is zero, so $I$ is a direct summand of $V$; the remaining window is summand-closed by \cref{lem:sms-window-summands}, after shifting. Hence $I\in\Sigma^d\Hcal$.

    Put $P=\Sigma^{-d}I$. The $d$-Calabi--Yau duality gives
    \[
        \DD\Hom_{\C}(P,\Sigma X)
        \simeq
        \Hom_{\C}(X,\Sigma^{-1}I)=0
        \qquad
        (X\in\Hcal),
    \]
    so $P$ is projective. Writing $I=\bigoplus_iI_i$ and $P=\bigoplus_iP_i$, we have
    \[
        \DD\Hom_{\Hcal}(P_i,L_j)
        \simeq
        \Hom_{\Hcal}(L_j,I_i)
        \simeq
        \delta_{ij}k.
    \]
    Thus the $P_i$ are precisely the indecomposable projectives corresponding to the simples $L_i$. Hence $P$ is a basic projective generator of $\Hcal$.

    Since $d\leq-3$, the vanishing above gives $\Hom_{\C}(\Hcal,\Sigma^2\Hcal)=0$. For $X,Y\in\Hcal$, the exact structure inherited from $\C$ gives the standard identification
    \[
        \Ext^1_{\Hcal}(X,Y)\simeq\Hom_{\C}(X,\Sigma Y).
    \]
    If $0\to Y'\to Y\to Y''\to0$ is exact in $\Hcal$, the corresponding triangle yields
    \[
        \Ext^1_{\Hcal}(X,Y)\longrightarrow\Ext^1_{\Hcal}(X,Y'')\longrightarrow\Hom_{\C}(X,\Sigma^2Y')=0.
    \]
    Thus $\Ext^1_{\Hcal}(X,-)$ is right exact for every $X$, which is equivalent to $\Hcal$ being hereditary.

    Set $\Gamma=\End_{\Hcal}(P)$. Then $\Hcal\simeq\modu\Gamma$. The algebra $\Gamma$ is basic and hereditary, and the degree-one part of the assumed isomorphism with $kQ^!$ identifies its Ext quiver with $Q$. Hence $\Gamma\simeq kQ$. Under the resulting equivalence $\Hcal\simeq\modu kQ$, the object $P$ corresponds to the regular right module $kQ$.
\end{proof}

The second step identifies the Koszul dual of the realizing positive dg algebra. We first isolate the bimodule compatibility supplied by the Calabi--Yau structure.

\begin{lem}\label{lem:CY-bimodule}
Let $P\in\C$ and put $\Gamma=\End_\C(P)$. If $\C$ is $d$-Calabi--Yau, then the bifunctorial Calabi--Yau isomorphism induces a $\Gamma$-$\Gamma$-bimodule isomorphism
\[
\Hom_\C(P,\Sigma^dP)\xrightarrow{\sim}\DD\Gamma.
\]
Here $\Gamma$ acts on $\Hom_\C(P,\Sigma^dP)$ by
\[
a\cdot x\cdot b=(\Sigma^da)\circ x\circ b,
\]
and on $\DD\Gamma$ by $(a\cdot\varphi\cdot b)(c)=\varphi(bca)$.
\end{lem}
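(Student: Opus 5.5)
We take the $d$-Calabi--Yau structure to be a bifunctorial isomorphism
\[
\eta_{X,Y}\colon\Hom_\C(X,Y)\xrightarrow{\sim}\DD\Hom_\C(Y,\Sigma^dX),
\]
natural in both variables. Specialize to $X=Y=P$. This gives a linear isomorphism $\Hom_\C(P,\Sigma^dP)\to\DD\End_\C(P)=\DD\Gamma$, more precisely the inverse-dual of $\eta_{P,P}$ composed with the identification $\DD\DD\Gamma\simeq\Gamma$. Equivalently, we use the trace form $t=\eta_{P,P}(\id_P)\in\DD\Hom_\C(P,\Sigma^dP)$ and define
\[
\theta\colon \Hom_\C(P,\Sigma^dP)\to\DD\Gamma,\qquad \theta(x)(c)=t\bigl(x\circ c\bigr)\quad\text{or}\quad t\bigl((\Sigma^dc)\circ x\bigr).
\]
The plan is to show that $\theta$ is an isomorphism and that it intertwines the two bimodule structures.

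The first step is to show that $\theta$ is bijective. By naturality of $\eta$ in the first variable applied to $c\in\Gamma$, we get $\eta_{P,P}(c)(x)=\eta_{P,P}(\id\circ c)(x)=t(x\circ c)$. So $\theta$ is, up to the double-dual identification, the transpose of $\eta_{P,P}$. It is therefore bijective, because $\eta_{P,P}$ is bijective and Hom spaces are finite-dimensional.

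The second step derives a cyclicity identity for $t$ from naturality in the second variable. Naturality in $Y$ gives $\eta_{P,P}(a\circ c)(x)=\eta_{P,P}(c)(x\circ a)$. Combining this with the first identity yields
\[
t\bigl((\Sigma^da)\circ x\bigr)=t(x\circ a)\qquad(a\in\Gamma,\ x\in\Hom_\C(P,\Sigma^dP)),
\]
after unwinding how $\Sigma^d$ acts on post-composition. This is the standard "trace is symmetric" consequence of bifunctoriality.

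The third step is the bimodule check. We compute
\[
\theta(a\cdot x\cdot b)(c)=t\bigl((\Sigma^da)\circ x\circ b\circ c\bigr)=t(x\circ bca)=\theta(x)(bca)=(a\cdot\theta(x)\cdot b)(c),
\]
using cyclicity to move $\Sigma^da$ to the right as $a$.

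The main obstacle is sign and convention bookkeeping. We must verify that the cyclicity obtained from the two naturality squares has no sign: Serre-functor compatibility with $\Sigma$ can introduce $(-1)^d$, but here we only move degree-zero endomorphisms, so none appears. We must also fix which variable $\eta$ is natural in covariantly so that the formula $\varphi(bca)$ comes out exactly rather than as $\varphi(cab)$. If the convention is reversed, we would instead use $\theta(x)(c)=t((\Sigma^dc)\circ x)$ and rerun the same argument.
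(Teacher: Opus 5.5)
Your route is essentially the paper's. Its pairing $\langle c,x\rangle_P$ is your $\eta_{P,P}(c)(x)$, and it gets $\langle c,(\Sigma^da)\circ x\circ b\rangle_P=\langle bca,x\rangle_P$ directly from bifunctoriality in both variables, which is exactly what your trace $t$ plus the cyclicity identity encode. Your bijectivity argument is the same transpose argument the paper uses.

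There is, however, one computational slip to fix, and it sits in the key step. With your convention $\eta_{X,Y}\colon\Hom_\C(X,Y)\to\DD\Hom_\C(Y,\Sigma^dX)$, naturality in the first variable (precomposition with $c$, i.e. writing $c=\id_P\circ c$) gives
\[
\eta_{P,P}(c)(x)=t\bigl((\Sigma^dc)\circ x\bigr).
\]
The identity $\eta_{P,P}(c)(x)=t(x\circ c)$ instead comes from naturality in the second variable, by writing $c=c\circ\id_P$.

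As you have written it, both identities you combine in the second step are consequences of naturality in $Y$. The first is the special case $c=\id_P$ of the second, so combining them is tautological: both sides reduce to $t(x\circ a\circ c)$, and no $\Sigma^d$ can appear. The cyclicity $t\bigl((\Sigma^da)\circ x\bigr)=t(x\circ a)$ is precisely the comparison of two formulas for $\eta_{P,P}(a)(x)$: namely $t\bigl((\Sigma^da)\circ x\bigr)$, from naturality in $X$, and $t(x\circ a)$, from naturality in $Y$.

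With this correction, your first and third steps go through verbatim. The two candidate definitions of $\theta$ then coincide, so the hedge at the end of your proposal is unnecessary.
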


\begin{proof}
Let $\langle-,-\rangle_P:\Gamma\times\Hom_\C(P,\Sigma^dP)\to k$ be the perfect pairing corresponding to the $d$-Calabi--Yau isomorphism. Bifunctoriality in both variables gives, for $a,b,c\in\Gamma$ and $x:P\to\Sigma^dP$,
\[
\langle c,(\Sigma^da)\circ x\circ b\rangle_P
=\langle bca,x\rangle_P.
\]
Therefore the map $x\mapsto(c\mapsto\langle c,x\rangle_P)$ is compatible with both the left and right $\Gamma$-actions, and it is an isomorphism because the Calabi--Yau pairing is non-degenerate.
\end{proof}

\begin{lem}\label{lem:KR-koszul-model}
    Let $A$ be the strictly positive dg algebra given by \cref{thm:realization} and put $B=A^!$. Under the assumptions of \cref{thm:negative-KR}, the dg algebra $B$ is formal and
    \[
        B\simeq kQ\ltimes \Sigma^{-d-1}\DD(kQ).
    \]
\end{lem}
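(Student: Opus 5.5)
The plan is to compute $H^*(B)$ first, and then to prove formality by an intrinsic-formality (Hochschild cohomology) argument.

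\emph{Computing $H^*(B)$.} By \cref{prop:d-stable-selfinjective}, $B$ is proper with cohomology in $[d+1,0]$. By \eqref{eq:CM-opposite-identification}, $\cosg(A)\simeq\underline{\CM}(B)$. Under the contravariant Koszul duality $\Phi_A$ of \cref{thm:Koszul-dual}, $S_A$ goes to $B^{\mathrm{o}}$, so
\[
H^{-i}(B)\simeq\Hom_{\D(A)}(S_A,\Sigma^{i}S_A)\quad\text{up to opposites}.
\]
Rather than computing these $A$-module Homs directly, I would use the construction in \cref{lem:construct-SA}: $S_A$ is the cone of a lift $\widetilde X\to\widetilde I$ of $\mathrm{id}_I$. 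In the present situation \cref{lem:KR-heart} gives $I=\Sigma^dP$ with $P$ the projective generator of $\Hcal\simeq\modu kQ$. So the triangle becomes
\[
\Sigma^d\widetilde P\to\widetilde I\to S_A\to\Sigma^{d+1}\widetilde P,
\]
where $\widetilde P,\widetilde I\in\Hcal_A$ lift $P$ and $I=\DD kQ\otimes_{kQ}P$. Applying $\Hom_{\D(A)}(-,\Sigma^\ast S_A)$ and using \cref{lem:hom-comparison}(i) to compute Homs between objects of $\Hcal_A$ in degrees $\ge d+1$ via $\C$, the Homs reduce to $\Ext^*_{kQ}$ between $kQ$ and $\DD kQ$. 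Since $\Hcal_A\simeq\Hcal$ in this range, I expect $H^0(B)\simeq\End(\widetilde P)^{\mathrm{o}}$-type data, giving $H^0(B)\simeq kQ$. This is consistent with $\Phi_A(\Hcal_A)=\modu H^0(B^{\mathrm{o}})^{\mathrm{o}}$ and $\Hcal\simeq\modu kQ$, matched via \cref{lem:KR-heart}. I also expect $H^{d+1}(B)\simeq\DD kQ$, as forced by $(-d)$-self-injectivity $\add\DD B=\add\Sigma^{d+1}B$, and I expect $H^i(B)=0$ for $d+1<i<0$ from the vanishing range $\Hom_\C(\Hcal,\Sigma^i\Hcal)=0$, $2\le i\le -d-1$. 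The bimodule structure on $H^{d+1}(B)$ will be pinned down by \cref{lem:CY-bimodule}: the $d$-Calabi--Yau pairing makes $\Hom_\C(P,\Sigma^dP)\simeq\DD\Gamma$ as $\Gamma$-bimodules, where $\Gamma=kQ$. The product $H^{d+1}\cdot H^{d+1}$ lands in degree $2d+2<d+1$ and hence vanishes. This yields
\[
H^*(B)\simeq kQ\ltimes\Sigma^{-d-1}\DD(kQ)
\]
as graded algebras.

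\emph{Formality.} Put $\Lambda=kQ\ltimes\Sigma^{-d-1}\DD kQ$ as a graded algebra. I would show that $\Lambda$ is intrinsically formal. The obstructions to formality (Kadeishvili) lie in the Hochschild cohomology groups $\HHom^{2,2-n}(\Lambda,\Lambda)$ for $n\ge3$, with Hochschild degree $2$ and internal degree $2-n$. Because $kQ$ is hereditary, I would compute Hochschild cohomology of the trivial extension by filtering $\Lambda$ by powers of the ideal $M=\Sigma^{-d-1}\DD kQ$, since $M^2=0$. An $m_n$ with $n$ inputs can involve $j$ copies of $M$ only if the internal degree satisfies $j(d+1)=2-n$. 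Hochschild cochains of $kQ$ with coefficients in bimodules are concentrated in cochain degrees $0$ and $1$ (hereditary). I expect this to leave no room in degree $(2,2-n)$ once $|d+1|\ge2$, which holds because $d\le-3$. This numerical degree argument is the main obstacle. If the direct count is messy, I would instead use a minimal $A_\infty$-model of $B$ in which $kQ$ sits as a formal subalgebra: $H^0(B)$ is hereditary, so $\tau_{\ge0}$-type arguments give a map $kQ\to B$. I would then argue that $B$ is a square-zero extension of $kQ$ by a shifted bimodule $N$ with $H^*(N)=\Sigma^{-d-1}\DD kQ$. Such extensions are classified by
\[
\Ext^{2}_{kQ^e}\bigl(kQ,\Sigma^{-d-1}\DD kQ\bigr)\text{-type groups},
\]
which vanish because $kQ^e$ has global dimension at most $2$ and the shift is large. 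This finishes $B\simeq\Lambda$ as dg algebras.
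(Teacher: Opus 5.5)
Your overall strategy matches the paper's. You compute $H^*(B)$ from the triangle $\Sigma^d\widetilde P\to\widetilde I\to S_A\to\Sigma^{d+1}\widetilde P$, pin down the bimodule structure of $H^{d+1}(B)$ with \cref{lem:CY-bimodule}, and conclude by intrinsic formality of $T=kQ\ltimes M$ with $M=\Sigma^{-d-1}\DD(kQ)$; the paper uses \cite[Theorem~4.7]{SeidelThomas01} relative to $\ell=k^{Q_0}$.

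\textbf{Formality step.} The step you flag as the main obstacle rests on a wrong degree count, and its conclusion ``no room'' is false.
\begin{itemize}
\item The obstruction to $m_n$ lives in Hochschild length $n$, i.e.\ in $\operatorname{HH}^n(T,T[2-n])$.
\item Take a cochain in $C^n_\ell(T,T[2-n])$ with $j$ inputs from $M$ (degree $d+1$) and $n-j$ inputs from $\overline{kQ}=\ker(kQ\to\ell)$. Its output has degree $j(d+1)+2-n$, and this must be $0$ or $d+1$. Your equation $j(d+1)=2-n$ has the wrong sign and ignores the output.
\item For $n>2$, output degree $0$ is impossible. Output degree $d+1$ forces $j=0$ and $n=1-d$.
\item So there is room: exactly one cochain group survives. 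The whole complex $C^\bullet_\ell(T,T[d+1])$ is the relative Hochschild complex $\Hom_{\ell^e}(\overline{kQ}^{\otimes_\ell s},M)$ of $kQ$ with coefficients in $\DD(kQ)$.
\item What you actually need is $\operatorname{HH}^{1-d}_\ell(kQ,\DD kQ)=0$. This holds because $kQ=T_\ell(kQ_1)$ has a length-one relative bimodule resolution and $1-d\geq 2$.
\end{itemize}
In particular, $d\leq-3$ plays no role in this step; your condition $|d+1|\geq2$ is not the operative one. That hypothesis enters only through heredity of $\Hcal$ in \cref{lem:KR-heart}.

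\textbf{Fallback.} Your square-zero-extension route does reach the right group, $\Hom_{\D(kQ^e)}(kQ,\Sigma^{1-d}\DD kQ)$. However, it needs the derived theory of square-zero extensions. Also, the algebra map $kQ\to B$ does not come from truncation, which goes $B\to H^0(B)$. It comes from freeness of $kQ$ over $\ell$: lift the arrows to cocycles in $\tau_{\leq 0}B$, using the idempotents coming from $S_A=\bigoplus_iS_i$.

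\textbf{Cohomology step.} This part is left at the level of expectation and aimed at the wrong tool.
\begin{itemize}
\item First, $H^i(B)=\Hom_{\D(A)}(S_A,\Sigma^iS_A)$, not $H^{-i}$.
\item Comparison with $\C$ via \cref{lem:hom-comparison}(i) is not the efficient tool here, since $\pi(S_A)=0$.
\item What you need is that $\Phi_A=\RHom_A(-,S_A)$ carries $\Hcal_A$ anti-equivalently onto the standard heart. Up to the opposite conventions, $\Phi_A(\widetilde P)\simeq\DD H^0(B)$ and $\Phi_A(\widetilde I)\simeq H^0(B)\simeq\End_{\Hcal_A}(\widetilde P)\simeq kQ$, both with cohomology in degree zero.
\item Applying $\Phi_A$ to your triangle gives $\Sigma^{-d-1}\DD(kQ)\to B\to kQ\to\Sigma^{-d}\DD(kQ)$. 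From this you read off $H^0(B)$, $H^{d+1}(B)$, and the vanishing of all other cohomology directly.
\item With this, self-injectivity is not needed at all. The vanishing range in $\C$ is not invoked again here; it was already used to get $I\simeq\Sigma^dP$.
\end{itemize}
Before applying \cref{lem:CY-bimodule}, you still need, as the paper does, that the identification $H^{d+1}(B)\simeq\Hom_\C(P,\Sigma^dP)$ is natural in both endomorphism actions.
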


\begin{proof}
    By \cref{lem:density,lem:hom-comparison}, the restriction
    \[
        \pi:\Hcal_A\xrightarrow{\sim}\Hcal
    \]
    is an equivalence. Choose lifts $\widetilde P,\widetilde I\in\Hcal_A$ of the projective generator $P$ and the injective cogenerator $I$. Since $I\simeq\Sigma^dP$, \cref{lem:hom-comparison}(i) lifts this isomorphism to a morphism
    \[
        \Sigma^d\widetilde P\longrightarrow\widetilde I.
    \]
    The argument of \cref{lem:construct-SA} identifies its cone with $S_A$, and therefore we obtain a triangle
    \[
        \Sigma^d\widetilde P
        \longrightarrow
        \widetilde I
        \longrightarrow
        S_A
        \longrightarrow
        \Sigma^{d+1}\widetilde P.
    \]

    The heart anti-equivalence induced by $\Phi_A$ sends the projective generator $\widetilde P$ to the injective cogenerator and the injective cogenerator $\widetilde I$ to the projective generator on the Koszul-dual side. After the canonical opposite identifications,
    \[
        \Phi_A(\widetilde P)\simeq\DD H^0(B),
        \qquad
        \Phi_A(\widetilde I)\simeq H^0(B).
    \]
    Since $\pi(\widetilde P)\simeq P$ and $\End_{\Hcal}(P)\simeq kQ$ by \cref{lem:KR-heart}, full faithfulness on the heart gives
    \[
        H^0(B)\simeq\End_{\Hcal_A}(\widetilde P)\simeq kQ.
    \]
    Apply $\Phi_A$ to the rotated triangle
    \[
        \widetilde I\longrightarrow S_A\longrightarrow\Sigma^{d+1}\widetilde P\longrightarrow\Sigma\widetilde I.
    \]
    Using $\Phi_A(S_A)\simeq B^{\mathrm{o}}$ and the identifications above, and then passing through the canonical opposite convention, we obtain a triangle
    \begin{equation}\label{eq:KR-koszul-triangle}
        \Sigma^{-d-1}\DD(kQ)
        \longrightarrow
        B
        \longrightarrow
        kQ
        \longrightarrow
        \Sigma^{-d}\DD(kQ).
    \end{equation}
    Since $-d\geq3$, its long exact cohomology sequence gives
    \[
        H^i(B)=0\quad(i\neq0,d+1),
        \qquad
        H^0(B)\simeq kQ,
        \qquad
        H^{d+1}(B)\simeq\DD(kQ)
    \]
    as one-sided modules.

    It remains to identify the bimodule structure in degree $d+1$. Naturality of the Hom identification underlying $\Phi_A$ with respect to both endomorphism actions identifies
    \[
        H^{d+1}(B)\simeq\Hom_{\C}(P,I)=\Hom_{\C}(P,\Sigma^dP)
    \]
    as a $\Gamma$-$\Gamma$-bimodule, where $\Gamma=\End_{\Hcal}(P)\simeq kQ$, with action
    \[
        a\cdot x\cdot b=(\Sigma^da)\circ x\circ b.
    \]
    By \cref{lem:CY-bimodule},
    \[
        H^{d+1}(B)\simeq\Hom_{\C}(P,\Sigma^dP)\simeq\DD\Gamma\simeq\DD(kQ)
    \]
    as $kQ$-bimodules. Thus no Nakayama twist occurs. Since $H^{2d+2}(B)=0$, the product of two elements of $H^{d+1}(B)$ vanishes. Thus, as a graded algebra,
    \begin{equation}\label{eq:KR-cohomology-algebra}
        H^*(B)\simeq kQ\ltimes \Sigma^{-d-1}\DD(kQ).
    \end{equation}

    It remains to prove formality. Put
    \[
    \ell=k^{Q_0},
    \qquad
    \Gamma=kQ,
    \qquad
    M=\Sigma^{-d-1}\DD\Gamma,
    \qquad
    T=\Gamma\ltimes M,
    \]
    and regard $T$ as an augmented graded algebra over $\ell$ via $T\to\Gamma\to\ell$. Here $\Gamma=kQ$ is the tensor algebra over $\ell$ on $V=kQ_1$. We apply \cite[Theorem~4.7]{SeidelThomas01}, whose semisimple ground ring $R=k^{Q_0}$ is precisely our $\ell$. For $n>2$, degree considerations give $C^n_{\ell}(T,T[2-n])=0$ unless $n=1-d$: a nonzero degree-zero cochain must take only degree-zero inputs and have its image in $M$. For the exceptional shift $2-n=d+1$, the whole cochain complex is
    \[
    C^s_{\ell}\bigl(T,T[d+1]\bigr)=\Hom_{\ell^e}\bigl(\overline\Gamma^{\otimes_{\ell}s},M\bigr),
    \qquad \overline\Gamma=\ker(\Gamma\to\ell),
    \]
    with the relative Hochschild differential for $\Gamma$ and coefficients $M=\Sigma^{-d-1}\DD\Gamma$. Thus the relevant cohomology group is $\operatorname{HH}^{1-d}_{\ell}(\Gamma,\DD\Gamma)$. Since $\Gamma$ is the tensor algebra over $\ell$ on $V$, it has the relative bimodule resolution
    \[
        0\longrightarrow
        \Gamma\otimes_{\ell}V\otimes_{\ell}\Gamma
        \longrightarrow
        \Gamma\otimes_{\ell}\Gamma
        \longrightarrow
        \Gamma
        \longrightarrow0,
    \]
    one has $\operatorname{HH}^{n}_{\ell}(\Gamma,\DD\Gamma)=0$ for every $n\geq2$. In particular,
    \[
    \operatorname{HH}^{n}_{\ell}\bigl(T,T[2-n]\bigr)=0
    \qquad(n>2).
    \]
    Hence $T$ is intrinsically formal by \cite[Theorem~4.7]{SeidelThomas01}, and therefore $B\simeq T$.
\end{proof}

\begin{proof}[Proof of \cref{thm:negative-KR}]
    Let
    \[
        T:=kQ\ltimes \Sigma^{-d-1}\DD(kQ).
    \]
    By \cref{lem:KR-koszul-model}, $B\simeq T$. Moreover, the proof of \cref{cor:CM-characterization} applied to the realizing algebra $A$ gives
    \[
        \C\simeq\underline{\CM}(B)\simeq\underline{\CM}(T).
    \]
    By \cite[Theorem~2.4 and Proposition~6.3]{Jin20},
    \[
        \underline{\CM}(T)\simeq\sg(T)\simeq\C_d(kQ).
    \]
    Hence $\C\simeq\C_d(kQ)$.
\end{proof}

\begin{rmk}\label{rmk:d-minus-2}
    The assumption $d\leq-3$ is used only in \cref{lem:KR-heart} to deduce that $\Hcal_\Lcal$ is hereditary. Thus the same statement holds for $d=-2$ if $\Hcal_\Lcal$ is hereditary.
\end{rmk}

\begin{exa}[A cyclic family and the Calabi--Yau period]\label{exa:cyclic-family}
Put $q=-d\geq2$, and let $m\geq2$. Let $Q_m$ be the oriented cycle with vertices $\mathbb Z/m\mathbb Z$ and arrows
\[
\alpha_i:i\longrightarrow i+1
\qquad(i\in\mathbb Z/m\mathbb Z).
\]
Set
\[
A_m=kQ_m,
\qquad |\alpha_i|=q,
\]
with zero differential. Then $A_m$ is a locally finite strictly positive dg algebra with $A_m^0\simeq k^m$. Up to reversing the cyclic labelling, its Koszul dual is
\[
B_m:=A_m^!\simeq kQ_m/\mathfrak r^2,
\qquad |\beta_i|=1-q=d+1,
\]
with zero differential, where $\mathfrak r=(\beta_i)_i$ is the arrow ideal. If $\sigma(i)=i-1$, then
\begin{equation}\label{eq:cycle-nakayama}
\DD(B_me_i)\simeq\Sigma^{1-q}e_{\sigma(i)}B_m.
\end{equation}
Hence
\[
\add\DD B_m=\add\Sigma^{1-q}B_m=\add\Sigma^{d+1}B_m,
\]
so $B_m$ is $q$-self-injective and $A_m$ is $d$-stable by \cref{prop:d-stable-selfinjective}. Since the Nakayama permutation $\sigma$ has order $m>1$, \eqref{eq:cycle-nakayama} also shows that $B_m$ is not shifted symmetric.

We claim that
\begin{equation}\label{eq:cyclic-cosg}
\cosg(A_m)\simeq\C_{md}(k).
\end{equation}
Let $S_i$ be the simple right $B_m$-module at $i$, and write $\overline S_i$ for its image in $\C:=\sg(B_m)$. By \cite[Theorems~2.4 and~5.6]{Jin20}, the collection $\{\overline S_i\}_{i\in\mathbb Z/m\mathbb Z}$ is a $q$-simple-minded system in $\C$. The projective presentations
\[
0\longrightarrow\Sigma^{q-1}S_{i+1}\longrightarrow e_iB_m\longrightarrow S_i\longrightarrow0
\]
give
\begin{equation}\label{eq:cycle-shift-permutation}
\Sigma^q\overline S_i\simeq\overline S_{i-1}.
\end{equation}
Together with the $q$-simple-minded-system axioms, \eqref{eq:cycle-shift-permutation} gives
\[
\C=\add\{\Sigma^r\overline S_0\mid0\leq r<mq\},
\qquad
\Sigma^{mq}\overline S_0\simeq\overline S_0,
\]
and these $mq$ objects are pairwise orthogonal bricks. Indeed, by iterating \eqref{eq:cycle-shift-permutation}, any morphism between two objects $\Sigma^a\overline S_0$ and $\Sigma^b\overline S_0$, with $0\leq a,b<mq$, reduces to a morphism
\[
\overline S_i\longrightarrow\Sigma^t\overline S_j
\qquad(1-q\leq t\leq0).
\]
For $t<0$ this vanishes by the $q$-simple-minded-system vanishing, while for $t=0$ the Schur condition gives zero unless $i=j$, in which case the endomorphism space is $k$. Hence $\C$ is semisimple, its Serre functor is the identity, and $\Sigma^{mq}\simeq\id$. Since $mq=-md$, the category $\C$ is $md$-Calabi--Yau. Moreover, $\{\overline S_0\}$ is an $mq$-simple-minded system and
\[
\bigoplus_{r=0}^{mq-1}\Hom_{\C}(\overline S_0,\Sigma^r\overline S_0)\simeq k
\]
as a graded algebra. Since $md\leq-4$, \cref{thm:negative-KR}, applied with Calabi--Yau dimension $md$ and the quiver $A_1$, yields
\[
\sg(B_m)=\C\simeq\C_{md}(k).
\]
Finally $B_m^{\mathrm{o}}\simeq B_m$, and $k$-duality identifies $\C_{md}(k)^{\mathrm{o}}$ with $\C_{md}(k)$. Hence \cref{rmk:pvd-finite} gives \eqref{eq:cyclic-cosg}. Since $\Sigma^q\overline S_0\simeq\overline S_{-1}\not\simeq\overline S_0$, the identity Serre functor is not isomorphic to $\Sigma^d=\Sigma^{-q}$; thus $\cosg(A_m)$ is not $d$-Calabi--Yau. More generally, it is not $jd$-Calabi--Yau for $1\leq j<m$.
\end{exa}

\section{Normalized Calabi--Yau structures}\label{sec:cy}

We now study the localization step at the level of mixed complexes; \cref{sec:left-right-CY} treats the Koszul-dual step.

\noindent\textbf{Mixed complexes.}
Following Kassel \cite{Kassel87}, let $\Lambda=k[\epsilon]/(\epsilon^2)$, where $|\epsilon|=-1$ and $d\epsilon=0$, and recall that a mixed complex is a dg $\Lambda$-module. For a small dg $k$-category $\A$, we write $M(\A)$ for the mixed complex associated with $\A$ by Keller \cite{Keller99}. With our cohomological grading convention, we use homological indexing $H_i(C)=H^{-i}(C)$ and set
\[
\begin{aligned}
\HHom_i(\A)&=H_i\bigl(M(\A)\bigr),\\
\HChom_i(\A)&=H_i\bigl(M(\A)\otimes_{\Lambda}^{\mathbf L}k\bigr),\\
\HN_i(\A)&=H_i\RHom_{\Lambda}\bigl(k,M(\A)\bigr).
\end{aligned}
\]
The augmentation $\Lambda\to k$ induces the canonical maps
\[
\HN_i(\A)\longrightarrow\HHom_i(\A)\longrightarrow\HChom_i(\A).
\]
\medskip\noindent\textbf{Right Calabi--Yau structures.}
We write $\DD\HHom_i(\A)$ and $\DD\HChom_i(\A)$ for the $k$-linear duals. There is a canonical map
\[
\DD\HHom_{-m}(\A)\longrightarrow
\Hom_{\D(\A^e)}(\Sigma^m\A,\DD\A).
\]
A class $x\in\DD\HHom_{-m}(\A)$ is \defterm{non-degenerate} if its image under this map is an isomorphism. A \defterm{right $m$-Calabi--Yau structure} on $\A$ is a class in $\DD\HChom_{-m}(\A)$ whose image in $\DD\HHom_{-m}(\A)$, dual to the canonical map $\HHom_{-m}(\A)\to\HChom_{-m}(\A)$, is non-degenerate.

\medskip\noindent\textbf{Reduced mixed complexes.}
Throughout this section, let $A$ be a $d$-stable locally finite strictly positive dg algebra, and put $\ell=A^0\simeq k^r$, regarded as a dg algebra concentrated in degree zero. The inclusion $\ell\hookrightarrow A$ defines a dg functor
\[
j:\ell\longrightarrow\per_{\dg}(A),\qquad *\longmapsto A,
\]
and we also consider its composite with the quotient $q:\per_{\dg}(A)\to\cosg_{\dg}(A)$. Define the reduced mixed complexes
\[
\begin{aligned}
\overline M(\per_{\dg}(A);\ell)
&=\operatorname{Cone}\bigl(M(\ell)\to M(\per_{\dg}(A))\bigr),\\
\overline M(\cosg_{\dg}(A);\ell)
&=\operatorname{Cone}\bigl(M(\ell)\to M(\cosg_{\dg}(A))\bigr).
\end{aligned}
\]
For either reduced mixed complex $\overline M$, we write
\[
\overline{\HHom}_i=H_i(\overline M),
\qquad
\overline{\HChom}_i=H_i\bigl(\overline M\otimes_{\Lambda}^{\mathbf L}k\bigr),
\]
with the dg category and semisimple algebra displayed when needed.

\begin{prop}[Reduced localization]\label{prop:reduced-localization}
There is a distinguished triangle
\[
M(\pvd_{\dg}(A))\longrightarrow\overline M(\per_{\dg}(A);\ell)\longrightarrow\overline M(\cosg_{\dg}(A);\ell)\longrightarrow.
\]
\end{prop}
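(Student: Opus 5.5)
The proof combines Keller's localization theorem for Hochschild/cyclic mixed complexes with the octahedral axiom, applied to the cone construction defining the reduced complexes. By \cref{prop:smooth-proper}, $\pvd(A)=\thick(S_A)\subseteq\per(A)$ is a thick subcategory. Hence
\[
\pvd_{\dg}(A)\longrightarrow\per_{\dg}(A)\xrightarrow{\;q\;}\cosg_{\dg}(A)
\]
is an exact sequence of small dg categories in the sense of Drinfeld/Keller. Keller's localization theorem \cite{Keller99} then gives a distinguished triangle of mixed complexes
\[
M(\pvd_{\dg}(A))\longrightarrow M(\per_{\dg}(A))\xrightarrow{M(q)} M(\cosg_{\dg}(A))\longrightarrow,
\]
that is, a triangle in $\D(\Lambda)$. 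Functoriality of $M$ is at the level of dg $\Lambda$-modules, so this triangle lives in the derived category of mixed complexes and not merely in complexes.

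Next I compare with the maps out of $M(\ell)$. The morphism $M(q\circ j)\colon M(\ell)\to M(\cosg_{\dg}(A))$ equals the composite $M(q)\circ M(j)$, because $M$ is a functor on dg functors. I then apply the octahedral axiom in $\D(\Lambda)$ to the composable pair $M(\ell)\xrightarrow{M(j)}M(\per_{\dg}(A))\xrightarrow{M(q)}M(\cosg_{\dg}(A))$. The cones of $M(j)$ and of $M(q)\circ M(j)$ are, by definition, $\overline M(\per_{\dg}(A);\ell)$ and $\overline M(\cosg_{\dg}(A);\ell)$. The cone of $M(q)$ is $\Sigma M(\pvd_{\dg}(A))$ by the localization triangle. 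The octahedral axiom therefore produces a triangle
\[
\overline M(\per_{\dg}(A);\ell)\longrightarrow\overline M(\cosg_{\dg}(A);\ell)\longrightarrow\Sigma M(\pvd_{\dg}(A))\longrightarrow,
\]
and rotating it gives the claimed triangle. To keep the first map canonical, I would instead use the strict cone construction: the commutative square formed by $\id_{M(\ell)}$ and $M(q)$ induces a map of cones, and this map has the same cone as $M(q)$ by the $3\times3$ lemma. Working this way at the level of dg $\Lambda$-modules avoids ambiguity, and the resulting first arrow is the inclusion of $M(\pvd_{\dg}(A))$ followed by the canonical map into the cone.

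The main obstacle is ensuring that Keller's localization holds for mixed complexes and not only for Hochschild complexes. I would cite the form in which Keller proves that $M$ sends exact sequences of dg categories, up to Morita equivalence, to triangles of mixed complexes. The standard argument goes through the Drinfeld quotient and the fact that $M$ is a localizing invariant. It is also worth noting that replacing $\per_{\dg}(A)$ by its Morita-equivalent model and $\cosg_{\dg}(A)$ by the Drinfeld dg quotient does not change $M$ up to quasi-isomorphism. The $d$-stability hypothesis enters only through the inclusion $\pvd(A)\subseteq\per(A)$.
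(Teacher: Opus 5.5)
Your proposal is correct and follows the paper's argument. Keller's localization theorem gives the triangle $M(\pvd_{\dg}(A))\to M(\per_{\dg}(A))\to M(\cosg_{\dg}(A))\to$ in $\D(\Lambda)$, and taking cones of the compatible maps out of $M(\ell)$ then yields the reduced triangle; your octahedral (or strict $3\times3$) argument just spells out the step the paper states in one line.
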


\begin{proof}
By Keller's localization theorem for mixed complexes \cite[Theorem~1.5]{Keller99},
\[
M(\pvd_{\dg}(A))\longrightarrow M(\per_{\dg}(A))\longrightarrow M(\cosg_{\dg}(A))\longrightarrow
\]
is a distinguished triangle. Taking the cones of the compatible morphisms from $M(\ell)$ gives the asserted triangle.
\end{proof}

\subsection{Normalized Hochschild and cyclic complexes}

Put $\overline A=\ker(A\to\ell)=A^{>0}$, and let $s$ denote the bar suspension of cohomological degree $-1$. The normalized Hochschild chains of Hochschild length $p$ are
\[
NC_p^{\ell}(A)=\left(A\otimes_{\ell}(s\overline A)^{\otimes_{\ell}p}\right)_{\ell},
\qquad
(-)_{\ell}=(-)\otimes_{\ell^e}\ell.
\]
Their direct sum $NC^{\ell}(A)=\bigoplus_{p\geq0}NC_p^{\ell}(A)$, equipped with the internal differential and the usual normalized Hochschild differential, is the normalized Hochschild complex. Here the superscript $\ell$ indicates that the normalized complex is taken over the separable subalgebra $\ell=A^0$. By Morita invariance of Keller's mixed complex and separability of $\ell$, the standard normalization over $\ell$ gives
\begin{equation}\label{eq:relative-HH-model}
H_i\bigl(NC^{\ell}(A)\bigr)\simeq\HHom_i(\per_{\dg}(A));
\end{equation}
see \cite{Keller99} and \cite[Chapter~1]{Loday98}.

Under the Morita identification $M(\per_{\dg}(A))\simeq M(A)$, the map from $M(\ell)$ is induced by the inclusion $\ell\hookrightarrow A$. On underlying Hochschild complexes, the copy of $\ell$ in Hochschild length zero is therefore a split subcomplex, and the cone $\overline M(\per_{\dg}(A);\ell)$ is represented by the quotient
\[
\overline{NC}^{\ell}(A)=NC^{\ell}(A)/\ell.
\]

For cyclic homology, it is more convenient to use the reduced cyclic $\lambda$-complex directly. In Hochschild length $p$, put
\[
\overline C_p^{\lambda,\ell}(A)
=
\frac{\left(\overline A\otimes_{\ell}(s\overline A)^{\otimes_{\ell}p}\right)_{\ell}}
{(1-\tau_p)\left(\overline A\otimes_{\ell}(s\overline A)^{\otimes_{\ell}p}\right)_{\ell}},
\]
where $\tau_p$ is the signed cyclic permutation with the standard Koszul sign convention. Since every tensor factor lies in the augmentation ideal $\overline A$, the operator $\tau_p$ preserves the displayed space and has cohomological degree zero. Let
\[
\overline C^{\lambda,\ell}(A)=\bigoplus_{p\geq0}\overline C_p^{\lambda,\ell}(A)
\]
with the differential induced by the internal and Hochschild differentials. Since $\operatorname{char}k=0$ and $\ell$ is separable, relative normalization identifies the cyclic homology of $\overline M(\per_{\dg}(A);\ell)$ with the homology of this relative $\lambda$-complex:
\begin{equation}\label{eq:relative-HC-model}
H_i\bigl(\overline C^{\lambda,\ell}(A)\bigr)
\simeq
\overline{\HChom}_i(\per_{\dg}(A);\ell).
\end{equation}
This is the separable-base relative analogue of \cite[Proposition~2.2.14]{Loday98}; compare the relative mixed-complex construction in \cite[Section~4.3]{KellerLiuAmiot24}.

For a homogeneous normalized Hochschild chain $a_0[sa_1|\cdots|sa_p]$, strict positivity gives
\begin{equation}\label{eq:normalized-degree}
|a_0|+\sum_{j=1}^p(|a_j|-1)\geq0.
\end{equation}
Hence $NC^{\ell}(A)$ is concentrated in non-negative cohomological degrees. In the reduced cyclic complex one also has $a_0\in\overline A$, so the same degree is in fact at least $1$; therefore $\overline C^{\lambda,\ell}(A)$ is concentrated in positive cohomological degrees.

Keller's Hochschild localization sequence for
$\pvd_{\dg}(A)\to\per_{\dg}(A)\to\cosg_{\dg}(A)$ and the reduced cyclic localization sequence from \cref{prop:reduced-localization} contain boundary morphisms
\begin{align}
\partial_{\mathrm{HH}}&:\HHom_{-d}(\cosg_{\dg}(A))\longrightarrow\HHom_{-d-1}(\pvd_{\dg}(A)),\label{eq:HH-boundary}\\
\delta_{\mathrm{red}}&:\overline{\HChom}_{-d}(\cosg_{\dg}(A);\ell)\longrightarrow\HChom_{-d-1}(\pvd_{\dg}(A)).\label{eq:reduced-boundary}
\end{align}

\begin{prop}\label{prop:reduced-vanishing}
For every $m>0$,
\[
\HHom_m(\per_{\dg}(A))=0,
\qquad
\overline{\HChom}_m(\per_{\dg}(A);\ell)=0.
\]
In particular, both boundary morphisms \eqref{eq:HH-boundary} and \eqref{eq:reduced-boundary} are isomorphisms.
\end{prop}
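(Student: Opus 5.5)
The vanishing is a degree count on the explicit chain models \eqref{eq:relative-HH-model} and \eqref{eq:relative-HC-model}; the boundary statements then follow from the long exact sequences. With homological indexing $H_m(C)=H^{-m}(C)$, it suffices to show that the two complexes computing $\HHom_*(\per_{\dg}(A))$ and $\overline{\HChom}_*(\per_{\dg}(A);\ell)$ have no cohomology in negative cohomological degrees.

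\emph{Vanishing.} By \eqref{eq:normalized-degree}, every homogeneous normalized chain $a_0[sa_1|\cdots|sa_p]$ has total cohomological degree $\geq0$, because strict positivity gives $|a_0|\geq0$ and $|a_j|-1\geq0$ for $a_j\in\overline A=A^{>0}$. Hence $NC^{\ell}(A)$ is concentrated in non-negative cohomological degrees, and \eqref{eq:relative-HH-model} gives $\HHom_m(\per_{\dg}(A))=0$ for $m>0$. For the reduced cyclic side, we use \eqref{eq:relative-HC-model}: $\overline C^{\lambda,\ell}(A)$ is concentrated in positive cohomological degrees, so $\overline{\HChom}_m(\per_{\dg}(A);\ell)=0$ for every $m\geq0$, in particular for $m>0$. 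The normalized complex is an infinite direct sum, but the vanishing is degreewise, so no convergence issue arises. The separable-base normalization is taken as established by the cited identifications, so the only real content here is this degree estimate.

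\emph{Boundary maps.} Next we apply Keller's Hochschild localization sequence,
\[
\HHom_{-d}(\per_{\dg}(A))\to\HHom_{-d}(\cosg_{\dg}(A))\xrightarrow{\partial_{\mathrm{HH}}}\HHom_{-d-1}(\pvd_{\dg}(A))\to\HHom_{-d-1}(\per_{\dg}(A)).
\]
Since $d\leq-2$, both $-d$ and $-d-1$ are at least $1$, so the outer terms vanish and $\partial_{\mathrm{HH}}$ is an isomorphism. Similarly, applying $-\otimes^{\mathbf L}_\Lambda k$ to the triangle of \cref{prop:reduced-localization} gives a triangle of cyclic complexes. In its long exact sequence, $\delta_{\mathrm{red}}$ sits between $\overline{\HChom}_{-d}(\per_{\dg}(A);\ell)$ and $\overline{\HChom}_{-d-1}(\per_{\dg}(A);\ell)$, both of which vanish, so $\delta_{\mathrm{red}}$ is an isomorphism.

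\emph{Main obstacle.} The delicate point is the identification \eqref{eq:relative-HC-model} of the reduced cyclic homology of the cone with the $\lambda$-complex, which uses characteristic zero and the separability of $\ell$. Here I rely on the paper's statement of that identification rather than reproving it. A secondary check is that the degree shift of the boundary maps, where the Hochschild degree drops by one, matches the indices in \eqref{eq:HH-boundary} and \eqref{eq:reduced-boundary}.
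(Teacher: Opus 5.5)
Your proposal is correct and follows the paper's proof. You read off the vanishing from the degree estimate \eqref{eq:normalized-degree} on the chain models \eqref{eq:relative-HH-model} and \eqref{eq:relative-HC-model}, then get the boundary isomorphisms from the Hochschild and reduced cyclic localization long exact sequences, since $-d,-d-1\geq1$; you just write out the degree count and the exact sequences in more detail than the paper does.
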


\begin{proof}
The degree estimates above, together with \eqref{eq:relative-HH-model} and \eqref{eq:relative-HC-model}, give
\[
\HHom_m(\per_{\dg}(A))=0,
\qquad
\overline{\HChom}_m(\per_{\dg}(A);\ell)=0
\qquad(m>0).
\]
Since $-d$ and $-d-1$ are positive, the Hochschild and reduced cyclic localization long exact sequences show that \eqref{eq:HH-boundary} and \eqref{eq:reduced-boundary} are isomorphisms.
\end{proof}

Since $\ell$ is semisimple, $\HHom_i(\ell)=0$ for $i\neq0$. Hence, for $d\leq-2$,
\begin{equation}\label{eq:reduced-HH-identification}
\HHom_{-d}(\cosg_{\dg}(A))\xrightarrow{\sim}\overline{\HHom}_{-d}(\cosg_{\dg}(A);\ell).
\end{equation}

\subsection{Non-degeneracy}

For $X,Y\in\per(A)$, a morphism $u:Y\to E$ with $E\in\pvd(A)$ is called a \defterm{local $\pvd(A)$-envelope of $Y$ relative to $X$} if postcomposition with $u$ induces an injection
\[
    \Hom_{\D(A)}(X,Y)\lhook\joinrel\longrightarrow\Hom_{\D(A)}(X,E).
\]
After passing to opposite categories, this is the local-cover condition of \cite[Definition~1.2]{Amiot09} used in \cite[Lemma~4.3.4]{HaniharaLiu26}: a local envelope $Y\to E$ relative to $X$ becomes a local cover $E^{\mathrm{o}}\to Y^{\mathrm{o}}$ relative to $X^{\mathrm{o}}$.

We use the canonical weight structure on $\D(A)$ from \cite[Corollary~5.1]{KN13}. There, the two halves are characterized by cohomological vanishing, and the associated truncation triangles induce isomorphisms in the complementary cohomological ranges. For $m\in\mathbb Z$, choose such a triangle
\begin{equation}\label{eq:positive-weight-truncation}
    \sigma_{\geq m}Y\longrightarrow Y\longrightarrow\sigma_{<m}Y\longrightarrow\Sigma\sigma_{\geq m}Y,
\end{equation}
such that
\[
    H^i(\sigma_{\geq m}Y)=0\quad(i<m),
    \qquad
    H^i(\sigma_{<m}Y)=0\quad(i\geq m),
\]
and the two maps induce the corresponding isomorphisms on the complementary cohomological ranges.

\begin{lem}\label{lem:local-envelope}
For $X,Y\in\per(A)$, the object $Y$ admits a local $\pvd(A)$-envelope relative to $X$. Consequently,
\[
\per_{\dg}(A)\xrightarrow{\sim}
\RHom_{\pvd_{\dg}(A)}\bigl(\per_{\dg}(A),\per_{\dg}(A)\bigr)
\]
in $\D(\per_{\dg}(A)^e)$.
\end{lem}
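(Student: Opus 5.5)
The proof has two steps. First we construct the envelope using the Keller--Nicol\'as weight truncation \eqref{eq:positive-weight-truncation}. Then we feed the envelopes into the Amiot / Hanihara--Liu local-cover criterion.

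\emph{Constructing the envelope.} Fix $X,Y\in\per(A)$. Since $A$ is locally finite and strictly positive, we first check that $X$ has bounded-below cohomological amplitude relative to $\Hom$. Concretely, $X\in\thick(A)$ is a finite extension of shifts $\Sigma^{a}A$ with $a$ in a bounded range $[a_0,a_1]$. Therefore $\Hom_{\D(A)}(X,Z)=0$ whenever $H^i(Z)=0$ for all $i\le N:=-a_0$; this follows by d\'evissage from $\Hom(\Sigma^aA,Z)=H^{-a}(Z)$. Now choose $m>N$ and take the truncation triangle
\[
\sigma_{\geq m}Y\longrightarrow Y\xrightarrow{u}\sigma_{<m}Y\longrightarrow\Sigma\sigma_{\geq m}Y .
\]
The object $E:=\sigma_{<m}Y$ has cohomology only in degrees $<m$. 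It is also bounded below, since $Y$ is perfect over a strictly positive algebra and so $H^{\ll0}(Y)=0$. Each cohomology group is finite-dimensional by local finiteness. Hence $E\in\pvd(A)$. Applying $\Hom(X,-)$, the kernel of $u_*$ is the image of $\Hom(X,\sigma_{\geq m}Y)$, and this group vanishes by the choice of $m$, because $H^{i}(\sigma_{\geq m}Y)=0$ for $i<m$. So $u$ is a local $\pvd(A)$-envelope relative to $X$.

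The main subtlety is the vanishing estimate. The weight-structure truncation $\sigma_{\ge m}Y$ need not lie in $\per(A)$, which does not matter here. However, the vanishing of $\Hom(\Sigma^aA,\sigma_{\ge m}Y)$ really uses the cohomological characterization of the halves, and I would cite \cite[Corollary~5.1]{KN13} precisely at this point.

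\emph{The second claim.} Pass to opposite categories. The envelopes become local covers $E^{\mathrm o}\to Y^{\mathrm o}$ relative to $X^{\mathrm o}$ with $E^{\mathrm o}\in\pvd(A)^{\mathrm o}$. This is exactly the hypothesis of \cite[Lemma~4.3.4]{HaniharaLiu26}, in the form of \cite[Definition~1.2]{Amiot09}. That lemma yields that the natural map from the diagonal bimodule of $\per_{\dg}(A)$ to the derived endomorphisms of the restriction functor to $\pvd_{\dg}(A)$ is an isomorphism in $\D(\per_{\dg}(A)^e)$.

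To make the mechanism explicit, I would argue pointwise. For fixed $X,Y$, the map
\[
\Hom(X,Y)\to \RHom_{\pvd}(\Hom(-,X),\Hom(-,Y))
\]
is computed by resolving $Y$ via the tower of envelopes. Injectivity on cohomology comes from the envelope property applied to all shifts $\Sigma^iX$. Surjectivity comes from iterating the construction on the cone $\sigma_{\ge m}Y$ shifted, which is where the Hom-finiteness of $\pvd(A)$ (\cref{prop:smooth-proper}) enters. Compatibility with the bimodule structure is formal, since all maps are induced by the dg Yoneda functor. I expect this identification to be routine once the cited lemma applies, so the real work is the envelope construction above.
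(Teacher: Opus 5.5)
Your proposal is correct and follows the paper's proof. The envelope is the weight truncation $Y\to\sigma_{<m}Y$ for $m\gg0$, and you then pass to opposites and invoke \cite[Lemma~4.3.4]{HaniharaLiu26}, exactly as the paper does. You only check injectivity, which is all the definition requires, whereas the paper also kills $\Hom(X,\Sigma\sigma_{\geq m}Y)$ to get an isomorphism; your extra pointwise ``mechanism'' sketch is unnecessary, and the paper uses no Hom-finiteness of $\pvd(A)$ at this point.
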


\begin{proof}
Choose a weight truncation \eqref{eq:positive-weight-truncation}. Since $Y$ is perfect over the strictly positive locally finite dg algebra $A$, its cohomology is locally finite and bounded below. Thus, for every $m$, the object $\sigma_{<m}Y$ has finite-dimensional total cohomology. Hence
\[
    \sigma_{<m}Y\in\pvd(A)\subseteq\per(A)
\]
by \eqref{eq:pvd-thick}.

For the fixed perfect object $X$, choose a finite construction of $X$ from shifts of direct summands of $A$. Since $H^{<m}(\sigma_{\geq m}Y)=0$, this finite construction shows that, for $m\gg0$,
\[
    \Hom_{\D(A)}(X,\sigma_{\geq m}Y)=0
    =\Hom_{\D(A)}(X,\Sigma\sigma_{\geq m}Y).
\]
Applying $\Hom_{\D(A)}(X,-)$ to \eqref{eq:positive-weight-truncation} therefore gives an isomorphism
\[
    \Hom_{\D(A)}(X,Y)\xrightarrow{\sim}\Hom_{\D(A)}(X,\sigma_{<m}Y),
\]
so $Y\to\sigma_{<m}Y$ is the required local envelope; in fact, it induces an isomorphism on the relevant $\Hom$ spaces. Passing to opposite dg categories turns these local envelopes into the local covers of \cite[Lemma~4.3.4]{HaniharaLiu26}. Hence that lemma gives
\[
\per_{\dg}(A)^{\mathrm{o}}\xrightarrow{\sim}
\RHom_{\pvd_{\dg}(A)^{\mathrm{o}}}\bigl(\per_{\dg}(A)^{\mathrm{o}},\per_{\dg}(A)^{\mathrm{o}}\bigr).
\]
Passing back to opposites yields the asserted isomorphism in $\D(\per_{\dg}(A)^e)$.
\end{proof}

\begin{lem}\label{lem:localization-amplitude}
Let
\[
K=\per_{\dg}(A)\otimes^{\mathbf L}_{\pvd_{\dg}(A)}\per_{\dg}(A).
\]
For $P,Q\in\add A$,
\[
K(P,Q)\in \D^{\leq d+1}(k).
\]
\end{lem}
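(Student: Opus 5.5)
The idea is to compute $K(P,Q)$ as the derived tensor product of the representable modules over $\pvd_{\dg}(A)$, using the weight truncations that already appeared in \cref{lem:local-envelope}. For $P,Q\in\add A$ set
\[
K(P,Q)=\per_{\dg}(A)(-,Q)|_{\pvd}\otimes^{\mathbf L}_{\pvd_{\dg}(A)}\per_{\dg}(A)(P,-)|_{\pvd}.
\]
The left $\pvd_{\dg}(A)$-module $\per_{\dg}(A)(P,-)|_{\pvd}$ is, by the local envelopes of \cref{lem:local-envelope}, a filtered homotopy colimit of the representables $\pvd_{\dg}(A)(\sigma_{<m}P,-)$. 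I will try to prove this directly: for $E\in\pvd(A)$, a morphism $P\to E$ factors through $\sigma_{<m}P$ for $m\gg0$, because $E$ has bounded cohomology and $\Hom(\sigma_{\ge m}P,E)=0$ for large $m$ by the weight-structure orthogonality of \cite[Corollary~5.1]{KN13}.

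Granting this, the Yoneda lemma gives
\[
K(P,Q)\simeq\operatorname{hocolim}_m\RHom_A(\sigma_{<m}P,Q).
\]
So it suffices to bound the amplitude of $\RHom_A(E,Q)$ uniformly for $E=\sigma_{<m}P\in\pvd(A)$.

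For this bound I will use $d$-stability. By \eqref{eq:pvd-thick}, $E$ lies in $\thick(S_A)$. More precisely, $E$ has a finite filtration with subquotients $\Sigma^{-i}S_A^{\oplus}$, where $i$ runs over the finitely many degrees with $H^i(E)\neq0$; here $0\le i<m$ because $P\in\add A$ has cohomology in degrees $\ge0$. This is the cohomological filtration of \cite[Proposition~5.6]{KN13}, taken with the correct orientation. \Cref{dfn:d-stable}(ii) gives
\[
\RHom_A(\Sigma^{-i}S_A,Q)=\Sigma^{i}\RHom_A(S_A,Q),
\]
which is concentrated in degree $d+1-i\le d+1$. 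By induction along the filtration, $\RHom_A(E,Q)\in\D^{\le d+1}(k)$. Since filtered homotopy colimits are exact and preserve $\D^{\le d+1}$, the claim follows.

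The main obstacle is the first step: identifying the restricted representable with the hocolim of the representables of the $\sigma_{<m}P$. This needs the truncations to be functorial enough to form a direct system $\sigma_{<m+1}P\to\sigma_{<m}P$. If it is awkward to do at chain level, I would fall back on the explicit bar model
\[
K(P,Q)=\bigoplus_{E_0,\dots,E_n}\per(E_n,Q)\otimes\pvd(E_{n-1},E_n)\otimes\cdots\otimes\per(P,E_0)
\]
and argue via the cofinality of the envelopes $P\to\sigma_{<m}P$, as in \cite[Lemma~4.3.4]{HaniharaLiu26}. A secondary point is the sign and direction of the shifts in the filtration of $E$. I will check this against the case $E=S_A$, where $K$ receives $\RHom(S_A,Q)$ in degree exactly $d+1$, so the bound is sharp.
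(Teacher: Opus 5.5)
Your proof is correct, but it follows a different route from the paper's.

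\textbf{The paper's route.} The paper evaluates the localization triangle $K\to\per_{\dg}(A)\to\cosg_{\dg}(A)\to\Sigma K$ at $(P,Q)$. It then shows that $H^i\per_{\dg}(A)(P,Q)\to H^i\cosg_{\dg}(A)(P,Q)$ is an isomorphism for every $i\geq d+1$:
\begin{itemize}
\item for $i>0$, by \cref{prop:fundamental-domain}, that is, Jin's Cohen--Macaulay theory transported through Koszul duality;
\item for $i=0$, because maps factoring through $\add S_A$ vanish, since $\Hom_{\D(A)}(S_A,Q)=0$;
\item for $d+1\leq i<0$, because both sides vanish.
\end{itemize}
The long exact sequence then kills $H^{>d+1}K(P,Q)$.

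\textbf{Your route.} You never touch $\cosg_{\dg}(A)$. You compute $K(P,Q)\simeq\operatorname{hocolim}_m\RHom_A(\sigma_{<m}P,Q)$ with the same weight truncations as in \cref{lem:local-envelope}. You then bound each term using only \cref{dfn:d-stable}(ii), the inclusion $\pvd(A)\subseteq\per(A)$, and the fact that $\sigma_{<m}P$ lies in the extension closure of the $\Sigma^{-i}S_A$ with $i\geq0$. The order of that filtration is irrelevant, since $\D^{\leq d+1}(k)$ is extension-closed.

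\textbf{Comparison.} Your argument is more self-contained: it bypasses Jin's results and \cref{prop:fundamental-domain} entirely. The paper's argument is shorter once \cref{prop:fundamental-domain} is available. It also directly produces the comparison isomorphisms that are reused, in the case $d=-2$, in \cref{prop:nondegeneracy}. Your bound recovers those as well: plugging it back into the triangle gives isomorphisms in degrees $\geq d+2$, and in particular in degree $0$.

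\textbf{The obstacle you flag is harmless.} The factorization $\sigma_{<m+1}P\to\sigma_{<m}P$ of $P\to\sigma_{<m}P$ exists and is unique in $\D(A)$. The reason is that both $\sigma_{\geq m+1}P$ and $\Sigma\sigma_{\geq m+1}P$ have cohomology in degrees $\geq m$, so both are orthogonal to $\sigma_{<m}P$.

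A sequential homotopy colimit only needs a sequence of morphisms in the derived category of $\pvd_{\dg}(A)$-modules. Any map from the telescope to $\per_{\dg}(A)(P,-)|_{\pvd}$ that extends the compatible Yoneda maps induces, at each $E$ and in each degree $j$, the canonical map
\[
\operatorname{colim}_m\Hom(\sigma_{<m}P,\Sigma^jE)\to\Hom(P,\Sigma^jE).
\]
This map is bijective by the orthogonality you cite, applied to $\Sigma^jE$ and $\Sigma^{j-1}E$. So the telescope map is a quasi-isomorphism, and the bar-model fallback is unnecessary.
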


\begin{proof}
The map
\[
\Hom_{\D(A)}(P,\Sigma^iQ)\longrightarrow\Hom_{\cosg(A)}(qP,\Sigma^iqQ)
\]
is an isomorphism for $i>0$ by \cref{prop:fundamental-domain}. For $i=0$, \cref{prop:fundamental-domain}(1) identifies the target with the source modulo morphisms factoring through $\add S_A$. Every such factorization is zero because
\[
    \Hom_{\D(A)}(S_A,Q)=H^0\RHom_A(S_A,Q)=0
\]
by $d$-stability, so the degree-zero map is also an isomorphism. Finally, for $d+1\leq i<0$, the source vanishes by strict positivity, while the target vanishes by \cref{prop:fundamental-domain}(1), since $P,\Sigma^iQ\in\F_A$. Evaluating the dg localization triangle
\[
K\longrightarrow\per_{\dg}(A)\xrightarrow{q}\cosg_{\dg}(A)\longrightarrow\Sigma K
\]
at $P,Q$ gives the assertion.
\end{proof}

\begin{prop}\label{prop:nondegeneracy}
The dual Hochschild boundary
\[
\delta_{\mathrm{HH}}:=\partial_{\mathrm{HH}}^\vee:
\DD\HHom_{-d-1}(\pvd_{\dg}(A))\xrightarrow{\sim}\DD\HHom_{-d}(\cosg_{\dg}(A))
\]
is an isomorphism and preserves and detects non-degeneracy.
\end{prop}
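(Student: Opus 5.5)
The isomorphism claim is immediate: by \cref{prop:reduced-vanishing} the boundary $\partial_{\mathrm{HH}}$ is an isomorphism of finite-dimensional spaces, so its $k$-dual $\delta_{\mathrm{HH}}$ is one as well. The real content is the compatibility with non-degeneracy. I would follow the Hanihara--Liu localization formalism. The idea is to express both non-degeneracy conditions through bimodule maps, and then to relate the two bimodule maps by the dg localization triangle
\[
K\longrightarrow\per_{\dg}(A)\xrightarrow{q}\cosg_{\dg}(A)\longrightarrow\Sigma K,
\qquad K=\per_{\dg}(A)\otimes^{\mathbf L}_{\pvd_{\dg}(A)}\per_{\dg}(A).
\]

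First, let $x\in\DD\HHom_{-d-1}(\pvd_{\dg}(A))$, viewed as a bimodule map $\Sigma^{d+1}\pvd_{\dg}(A)\to\DD\pvd_{\dg}(A)$. I would induce it along $\pvd_{\dg}(A)\to\per_{\dg}(A)$ to obtain a map of $\per_{\dg}(A)$-bimodules
\[
\Sigma^{d+1}K\longrightarrow\RHom_{\pvd_{\dg}(A)}\bigl(\per_{\dg}(A),\DD\per_{\dg}(A)\bigr)
\]
relating $K$ and the right adjoint side. By \cref{lem:local-envelope}, the right-hand side is identified with $\DD$ of the coinduced bimodule, which can be compared with $\DD\per_{\dg}(A)$. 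Following \cite[Section~4.3]{HaniharaLiu26}, the class $\delta_{\mathrm{HH}}(x)$ is characterized as the unique map $\Sigma^d\cosg_{\dg}(A)\to\DD\cosg_{\dg}(A)$ fitting into a morphism of triangles. One row is the localization triangle above, shifted. The other row is its dual $\DD\cosg_{\dg}(A)\to\DD\per_{\dg}(A)\to\DD K$, where the middle maps come from the restriction of the boundary to $\per_{\dg}(A)$. To check this compatibility, I would run the boundary of Keller's localization sequence through the chain-level Hochschild-to-bimodule map. This is formal once the naturality of the canonical map $\DD\HHom\to\Hom(\Sigma^m\A,\DD\A)$ under dg functors and quotients is in place.

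Next comes the comparison. By \cref{lem:localization-amplitude}, $K(P,Q)\in\D^{\leq d+1}(k)$ for $P,Q\in\add A$. By the positivity estimate, $\per_{\dg}(A)(P,Q)\in\D^{\geq0}(k)$. Their duals have complementary amplitude, so in the morphism of triangles the middle comparison map $\Sigma^{d}\per_{\dg}(A)\to\DD\per_{\dg}(A)$ is forced to vanish on $\add A$ by degree reasons, since there are no nonzero maps from $\D^{\geq -d}$ to $\D^{\leq0}$. The 5-lemma-type argument on the triangle then shows the following. On evaluations at $(P,Q)$ with $P,Q\in\add A$, $\delta_{\mathrm{HH}}(x)$ is an isomorphism if and only if the induced map on $K$ is an isomorphism. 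Both sides are exact in each variable, and $\cosg(A)=\thick(qA)$ and $\per(A)=\thick(A)$, so it suffices to check these generators.

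Finally, I would relate the map on $K$ to $x$ itself. By $d$-stability, $\pvd(A)=\thick(S_A)$ and $S_A\in\F_A$. Morphisms from $\per$ objects to $\pvd$ objects are computed by $K$ via the Yoneda identification, using \cref{lem:local-envelope} again. Hence the induced map on $K$ is an isomorphism if and only if $x$ is non-degenerate on $S_A$, that is, if and only if $x$ is non-degenerate, because $S_A$ generates $\pvd(A)$. For the converse direction (detection), I would use that the induction $\pvd\to\per$ is fully faithful, so $x$ can be recovered from its induced map.

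The main obstacle is the middle step: establishing rigorously that $\delta_{\mathrm{HH}}(x)$ is represented by the third map in a morphism of bimodule triangles. This is where I would rely most heavily on \cite[Lemma~4.3.4]{HaniharaLiu26} and the local-envelope property, rather than rederive it.
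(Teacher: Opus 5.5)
Your framework is the paper's. It uses the Hanihara--Liu localization formalism, the bimodule $K$, the bound $K(P,Q)\in\D^{\leq d+1}(k)$ together with positivity, reduction to $\add A$, and restriction to $\pvd_{\dg}(A)$ to recover $x$. But the comparison step, which carries the real content, fails as stated.

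Write $\Tdg=\per_{\dg}(A)$, $\Pdg=\pvd_{\dg}(A)$ and $\Qdg=\cosg_{\dg}(A)$. Let $f_x\colon\Sigma^{d+1}\Pdg\to\DD\Pdg$ and $g_x\colon\Sigma^d\Qdg\to\DD\Qdg$ be the bimodule maps of $x$ and $\delta_{\mathrm{HH}}(x)$.

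\textbf{The misidentified middle map.} Because $\DD$ reverses arrows, the morphism of localization triangles from \cite[Proposition~4.2.2 and Theorem~4.3.1(a)]{HaniharaLiu26} has vertical maps $u_x\colon\Sigma^{d+1}K\to\DD\Tdg$, $v_x\colon\Sigma^{d+1}\Tdg\to\DD K$ and $\Sigma g_x$. The two rows interleave $K$ and $\Tdg$, so there is no vertical map $\Sigma^d\Tdg\to\DD\Tdg$.

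The composite $\Sigma^d\Tdg\to\Sigma^d\Qdg\xrightarrow{g_x}\DD\Qdg\to\DD\Tdg$ does vanish, since it represents the pullback of $\delta_{\mathrm{HH}}(x)$ to $\DD\HHom_{-d}(\Tdg)=0$. That vanishing only lets you build the morphism of triangles; it proves nothing further.

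A morphism of triangles with one zero vertical map gives no ``iso iff iso'' relation between the other two. Pointwise vanishing on $\add A$ also would not show that a morphism in $\D(\Tdg^e)$ is zero. A warning sign is that your degree argument works equally well for $d=-2$, where the actual argument becomes delicate.

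\textbf{Detection.} If $g_x$ is invertible, then $\operatorname{Cone}(u_x)\simeq\operatorname{Cone}(v_x)$. For $P,Q\in\add A$, \cref{lem:localization-amplitude} and positivity give $\operatorname{Cone}(u_x)(P,Q)\in\D^{\leq0}(k)$ and $\operatorname{Cone}(v_x)(P,Q)\in\D^{\geq -d-2}(k)$.
\begin{itemize}
\item For $d\leq-3$ these ranges are disjoint. So the cone vanishes and $u_x$ is invertible on generators, hence everywhere. Restricting to $\Pdg^e$ then gives $f_x$.
\item For $d=-2$ both ranges contain degree $0$, and you must show separately that $H^0(u_x(P,Q))$ is bijective. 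The paper does this by combining three inputs: the boundary isomorphism $H^{-2}\Qdg(P,Q)\simeq H^{-1}K(P,Q)$, the degree-zero isomorphism $H^0\Tdg(Q,P)\simeq H^0\Qdg(Q,P)$ from the proof of \cref{lem:localization-amplitude}, and the non-degeneracy of $g_x$.
\end{itemize}

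\textbf{Preservation.} Invertibility of $f_x$ gives invertibility of $u_x$ through the local envelopes of \cref{lem:local-envelope}. To conclude that $g_x$ is invertible you also need $v_x$ invertible. Passing from $u_x$ to $v_x$ requires the biduality isomorphism $\Tdg\xrightarrow{\sim}\DD\DD\Tdg$, which holds because $A$ is locally finite. This is the second hypothesis of \cite[Theorem~4.3.1(b)]{HaniharaLiu26}, which the paper cites for preservation, and it is missing from your plan.
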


\begin{proof}
The isomorphism assertion follows from \cref{prop:reduced-vanishing}. It remains to prove the non-degeneracy statement. For brevity in this proof only, write
\[
\Pdg=\pvd_{\dg}(A),\qquad \Tdg=\per_{\dg}(A),\qquad \Qdg=\cosg_{\dg}(A).
\]

\emph{Preservation.}
We apply \cite[Theorem~4.3.1]{HaniharaLiu26} to the exact sequence
\[
\Pdg\longrightarrow\Tdg\longrightarrow\Qdg,
\]
with the autoequivalence $\nu=\Sigma^{-d-1}$ of $\Tdg$ and the bimodule
\[
M=\Sigma^{d+1}\Tdg\simeq\Tdg(\nu ?,-).
\]
The autoequivalence $\nu$ preserves $\Pdg$, and the induced coefficient bimodules are
\[
M_{\Pdg}\simeq\Sigma^{d+1}\Pdg,
\qquad
M_{\Qdg}\simeq\Sigma^{d+1}\Qdg.
\]
Thus localization lowers the Calabi--Yau degree by one: a class on $\Pdg$ is represented by $\Sigma^{d+1}\Pdg\to\DD\Pdg$, whereas its boundary on $\Qdg$ is represented by $\Sigma^d\Qdg\to\DD\Qdg$. Hence the connecting morphism of \cite[Theorem~4.3.1]{HaniharaLiu26} is precisely $\delta_{\mathrm{HH}}$. Since $A$ is locally finite, every morphism complex of $\Tdg=\per_{\dg}(A)$ has degreewise finite-dimensional cohomology. Hence the canonical biduality morphism
\[
\Tdg\longrightarrow\DD\DD\Tdg
\]
is an isomorphism in $\D(\Tdg^e)$. On the other hand, \cref{lem:local-envelope} gives
\[
\Tdg\xrightarrow{\sim}\RHom_{\Pdg}(\Tdg,\Tdg).
\]
\cite[Theorem~4.3.1(b)]{HaniharaLiu26} therefore applies, and $\delta_{\mathrm{HH}}$ preserves non-degeneracy.

\emph{Detection.}
Let $x\in\DD\HHom_{-d-1}(\Pdg)$ and assume that $\delta_{\mathrm{HH}}(x)$ is non-degenerate. Let
\[
    f_x:\Sigma^{d+1}\Pdg\longrightarrow \DD\Pdg,
    \qquad
    g_x:\Sigma^d\Qdg\longrightarrow \DD\Qdg
\]
be the bimodule morphisms corresponding to $x$ and $\delta_{\mathrm{HH}}(x)$, respectively. Put
\[
    K=\per_{\dg}(A)\otimes^{\mathbf L}_{\pvd_{\dg}(A)}\per_{\dg}(A).
\]
Using the adjunctions of \cite[Lemma~4.2.1]{HaniharaLiu26}, let
\[
    u_x:\Sigma^{d+1}K\longrightarrow\DD\Tdg,
    \qquad
    v_x:\Sigma^{d+1}\Tdg\longrightarrow\DD K
\]
be the morphisms corresponding to $f_x$. By \cite[Proposition~4.2.2 and Theorem~4.3.1(a)]{HaniharaLiu26}, they fit into a morphism of localization triangles in $\D(\Tdg^e)$:
\[
\begin{tikzcd}[column sep=large]
\Sigma^{d+1}K \ar[r] \ar[d,"u_x"'] & \Sigma^{d+1}\Tdg \ar[r] \ar[d,"v_x"'] & \Sigma^{d+1}\Qdg \ar[r] \ar[d,"\Sigma g_x"'] & {} \\
\DD\Tdg \ar[r] & \DD K \ar[r] & \Sigma \DD\Qdg \ar[r] & {}.
\end{tikzcd}
\]
Here the third vertical morphism denotes the restriction of $\Sigma g_x$ along $\Tdg^e\to\Qdg^e$. Since $g_x$ is an isomorphism, the morphism of triangles yields
\[
    \operatorname{Cone}(u_x)\simeq\operatorname{Cone}(v_x).
\]
For $P,Q\in\add A$, strict positivity and \cref{lem:localization-amplitude} give
\[
    \operatorname{Cone}(u_x)(P,Q)\in \D^{\leq0}(k),
    \qquad
    \operatorname{Cone}(v_x)(P,Q)\in \D^{\geq-d-2}(k).
\]

\emph{The case $d\leq-3$.}
Here $-d-2\geq1$, so the two bounds are disjoint. Since the two cones are isomorphic, both vanish.

\emph{The case $d=-2$.}
The two bounds show that the common cone can have cohomology only in degree zero. We claim that $H^0(u_x(P,Q))$ is an isomorphism. Since $\Tdg(P,Q)$ is concentrated in non-negative degrees, the localization triangle gives a boundary isomorphism
\[
    \partial:H^{-2}\Qdg(P,Q)\xrightarrow{\sim}H^{-1}K(P,Q).
\]
Moreover, the degree-zero map
\[
    H^0\Tdg(Q,P)\xrightarrow{\sim}H^0\Qdg(Q,P)
\]
is an isomorphism by the proof of \cref{lem:localization-amplitude}. Evaluating the morphism of localization triangles at $(P,Q)$ gives a morphism of long exact cohomology sequences. By naturality, the resulting square commutes up to the standard sign of the connecting morphism:
\[
\begin{tikzcd}[column sep=large,row sep=large]
H^{-1}K(P,Q) \ar[r,"{H^0(u_x(P,Q))}"] \ar[d,"\partial^{-1}"'] & \DD H^0\Tdg(Q,P) \ar[d,"{(\DD H^0(q_{Q,P}))^{-1}}"] \\
H^{-2}\Qdg(P,Q) \ar[r,"{\langle-,-\rangle_{g_x}}"'] & \DD H^0\Qdg(Q,P).
\end{tikzcd}
\]
The lower horizontal arrow is an isomorphism because $g_x$ is non-degenerate, and the two vertical arrows are isomorphisms by the preceding paragraph. Hence $H^0(u_x(P,Q))$ is an isomorphism. Since the source of $u_x(P,Q)$ lies in $\D^{\leq0}(k)$, the long exact cohomology sequence of its cone, together with the preceding amplitude bounds, shows that the common cone vanishes.

Thus $u_x$ is an isomorphism on $\add A$. Since $\Tdg=\thick(A)$, it is an isomorphism in $\D(\Tdg^e)$. By the adjunctions of \cite[Lemma~4.2.1]{HaniharaLiu26}, restriction along $\Pdg^e\to\Tdg^e$ identifies
\[
    K|_{\Pdg^e}\simeq\Pdg,
    \qquad
    (\DD\Tdg)|_{\Pdg^e}\simeq \DD\Pdg,
\]
and, under these identifications, $u_x|_{\Pdg^e}$ is exactly $f_x$. Therefore $f_x$ is an isomorphism and $x$ is non-degenerate.
\end{proof}

\subsection{Normalized Calabi--Yau structures}

\begin{dfn}\label{dfn:normalized-CY}
An \defterm{$\ell$-normalized right $d$-Calabi--Yau structure} on $\cosg_{\dg}(A)$ is a class
\[
\alpha\in \DD\overline{\HChom}_{-d}(\cosg_{\dg}(A);\ell)
\]
whose image in $\DD\HHom_{-d}(\cosg_{\dg}(A))$ via \eqref{eq:reduced-HH-identification} is non-degenerate.
\end{dfn}

This should not be confused with a relative Calabi--Yau structure on $\ell\to\cosg_{\dg}(A)$: here non-degeneracy is imposed only on the induced absolute Hochschild class of $\cosg_{\dg}(A)$.

\begin{thm}[Hochschild localization and normalized Calabi--Yau correspondence]\label{thm:normalized-CY}
Let $d\leq-2$, and let $A$ be a $d$-stable locally finite strictly positive dg algebra with $\ell=A^0\simeq k^r$. Then the Hochschild boundary \eqref{eq:HH-boundary} is an isomorphism, and its dual preserves and detects non-degeneracy. Moreover, the dual of the reduced cyclic boundary \eqref{eq:reduced-boundary} induces a bijection
\[
\begin{gathered}
\left\{\text{right $(d+1)$-Calabi--Yau structures on $\pvd_{\dg}(A)$}\right\}\\[-1mm]
\xrightarrow{\sim}\\[-1mm]
\left\{\text{$\ell$-normalized right $d$-Calabi--Yau structures on $\cosg_{\dg}(A)$}\right\}.
\end{gathered}
\]
\end{thm}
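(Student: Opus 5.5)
The Hochschild statements are already in hand: \cref{prop:reduced-vanishing} shows that $\partial_{\mathrm{HH}}$ is an isomorphism, and \cref{prop:nondegeneracy} shows that its dual $\delta_{\mathrm{HH}}$ preserves and detects non-degeneracy. The work is therefore the cyclic part. I will compare the dual reduced cyclic boundary $\delta_{\mathrm{red}}^\vee$ with $\delta_{\mathrm{HH}}$ through the canonical maps from cyclic to Hochschild data.

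\textbf{Step 1: $\delta_{\mathrm{red}}^\vee$ is a bijection.} By \cref{prop:reduced-vanishing}, $\delta_{\mathrm{red}}$ is an isomorphism, so its dual
\[
\delta_{\mathrm{red}}^\vee:\DD\HChom_{-d-1}(\pvd_{\dg}(A))\xrightarrow{\sim}\DD\overline{\HChom}_{-d}(\cosg_{\dg}(A);\ell)
\]
is a bijection of the ambient spaces of classes. It remains to see that it matches the non-degeneracy conditions on the two sides.

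\textbf{Step 2: the comparison square.} The triangle of \cref{prop:reduced-localization} is a triangle of mixed complexes. The functor $-\otimes^{\mathbf L}_\Lambda k$ and the forgetful functor to complexes are related by the natural map $C\to C\otimes^{\mathbf L}_\Lambda k$, which is compatible with the triangle. Applying both functors to the triangle gives a morphism of long exact sequences. Its relevant square says that the composite
\[
\overline{\HHom}_{-d}(\cosg_{\dg}(A);\ell)\to\overline{\HChom}_{-d}(\cosg_{\dg}(A);\ell)\xrightarrow{\delta_{\mathrm{red}}}\HChom_{-d-1}(\pvd_{\dg}(A))
\]
equals the composite of the Hochschild boundary of the reduced triangle with $\HHom_{-d-1}\to\HChom_{-d-1}$.

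Next I identify the Hochschild boundary of the reduced triangle with $\partial_{\mathrm{HH}}$. This uses \eqref{eq:reduced-HH-identification}, together with the fact that the reduction cones are compatible with the unreduced Keller triangle; both are cones of the same map from $M(\ell)$. Dualizing the square then gives
\[
\mathrm{res}\circ\delta_{\mathrm{red}}^\vee=\delta_{\mathrm{HH}}\circ\mathrm{res},
\]
where each $\mathrm{res}$ denotes the map from dual cyclic classes to dual Hochschild classes. On the $\cosg$ side, $\mathrm{res}$ also includes the identification \eqref{eq:reduced-HH-identification}, which is exactly the map used in \cref{dfn:normalized-CY}.

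\textbf{Step 3: matching non-degeneracy.} Take $\alpha\in\DD\HChom_{-d-1}(\pvd_{\dg}(A))$. It is a right $(d+1)$-Calabi--Yau structure exactly when $\mathrm{res}(\alpha)$ is non-degenerate. By \cref{prop:nondegeneracy}, this holds exactly when $\delta_{\mathrm{HH}}(\mathrm{res}\,\alpha)=\mathrm{res}(\delta_{\mathrm{red}}^\vee\alpha)$ is non-degenerate. By \cref{dfn:normalized-CY}, that is exactly the condition for $\delta_{\mathrm{red}}^\vee\alpha$ to be $\ell$-normalized right $d$-Calabi--Yau. Combined with Step 1, this gives the asserted bijection.

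\textbf{Main obstacle.} The delicate point is the sign and compatibility check in Step 2. Two things must be verified: that the connecting morphisms commute with the natural transformation $\mathrm{id}\to-\otimes^{\mathbf L}_\Lambda k$, possibly up to a global sign that does not affect non-degeneracy; and that the reduced Hochschild boundary agrees with $\partial_{\mathrm{HH}}$ after \eqref{eq:reduced-HH-identification}. For the second, my first attempt is an octahedral argument: the reduced triangle is obtained from Keller's triangle by taking cones of $M(\ell)$, whose Hochschild homology lives only in degree $0$. It therefore cannot interfere in degrees $-d$ and $-d-1$, since $d\le-2$ makes both of these positive.
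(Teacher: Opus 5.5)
Your proposal is correct and is essentially the paper's proof. The paper also takes the Hochschild part from \cref{prop:reduced-vanishing} and \cref{prop:nondegeneracy}, and gets the bijection from the commutative square comparing $\delta_{\mathrm{red}}^\vee$ with $\delta_{\mathrm{HH}}$ via the dual cyclic-to-Hochschild maps, composed with the dual of \eqref{eq:reduced-HH-identification} on the $\cosg$ side. The compatibility you flag in Step 2 is what the paper compresses into ``naturality''. One clarification: the agreement of the two boundaries comes from the morphism from Keller's triangle to the reduced triangle (identity on $M(\pvd_{\dg}(A))$), not from degree reasons. The vanishing of $\HHom_i(\ell)$ for $i\neq0$ is needed only to make \eqref{eq:reduced-HH-identification} an isomorphism.
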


\begin{proof}
The Hochschild assertion is \cref{prop:nondegeneracy}. Naturality of the passage from cyclic to Hochschild homology gives a commutative square
\[
\begin{tikzcd}[column sep=large,row sep=large]
\DD\HChom_{-d-1}(\pvd_{\dg}(A)) \ar[r,"\delta_{\mathrm{red}}^\vee"] \ar[d] & \DD\overline{\HChom}_{-d}(\cosg_{\dg}(A);\ell) \ar[d] \\
\DD\HHom_{-d-1}(\pvd_{\dg}(A)) \ar[r,"\delta_{\mathrm{HH}}"'] & \DD\HHom_{-d}(\cosg_{\dg}(A)),
\end{tikzcd}
\]
where the right vertical arrow is the cyclic-to-Hochschild map followed by the dual of \eqref{eq:reduced-HH-identification}. The upper horizontal arrow is an isomorphism by \cref{prop:reduced-vanishing}, while the lower horizontal arrow preserves and detects non-degeneracy by \cref{prop:nondegeneracy}. Therefore the upper isomorphism restricts to a bijection between the two sets of Calabi--Yau structures in the statement.
\end{proof}

For the realizing algebra of \cref{thm:realization}, the primitive idempotents $e_i\in\ell=A^0$ are labelled so that the quasi-equivalence $\overline\pi_{\dg}:\cosg_{\dg}(A)\simeq\C_{\dg}$ sends $e_iA$ to $L_i$.

\subsection{Ordinary Calabi--Yau structures}

The relation with ordinary cyclic classes is governed by the exact sequence
\begin{equation}\label{eq:ordinary-reduced-sequence}
\begin{aligned}
\HChom_{-d}(\ell)&\longrightarrow\HChom_{-d}(\cosg_{\dg}(A))\longrightarrow\overline{\HChom}_{-d}(\cosg_{\dg}(A);\ell)\\
&\longrightarrow\HChom_{-d-1}(\ell)\longrightarrow\HChom_{-d-1}(\cosg_{\dg}(A)),
\end{aligned}
\end{equation}
together with
\begin{equation}\label{eq:semisimple-cyclic}
\HChom_{2n}(\ell)\simeq k^r,\qquad \HChom_{2n+1}(\ell)=0\qquad(n\geq0).
\end{equation}

\begin{prop}\label{prop:parity}
\begin{enumerate}[label=\textup{(\arabic*)}]
\item If $d$ is even, the forgetful map
\[
\DD\overline{\HChom}_{-d}(\cosg_{\dg}(A);\ell)\longrightarrow\DD\HChom_{-d}(\cosg_{\dg}(A))
\]
is injective. Its image consists of the functionals annihilating the image of $\HChom_{-d}(\ell)\to\HChom_{-d}(\cosg_{\dg}(A))$.
\item If $d$ is odd and
\[
N=\ker\bigl(\HChom_{-d-1}(\ell)\to\HChom_{-d-1}(\cosg_{\dg}(A))\bigr),
\]
then
\[
0\longrightarrow \DD N\longrightarrow \DD\overline{\HChom}_{-d}(\cosg_{\dg}(A);\ell)\longrightarrow\DD\HChom_{-d}(\cosg_{\dg}(A))\longrightarrow0.
\]
Hence every ordinary right $d$-Calabi--Yau structure admits a normalized lift.
\end{enumerate}
\end{prop}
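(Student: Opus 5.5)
The plan is to dualize the exact sequence \eqref{eq:ordinary-reduced-sequence} and then read off parities from \eqref{eq:semisimple-cyclic}. Since all the spaces involved are finite-dimensional, or at least we only dualize exact sequences of $k$-vector spaces, $\DD$ is exact. It turns \eqref{eq:ordinary-reduced-sequence} into
\[
\DD\HChom_{-d-1}(\cosg_{\dg}(A))\to\DD\HChom_{-d-1}(\ell)\to\DD\overline{\HChom}_{-d}(\cosg_{\dg}(A);\ell)\to\DD\HChom_{-d}(\cosg_{\dg}(A))\to\DD\HChom_{-d}(\ell).
\]
The forgetful map in the statement is the dual of $\HChom_{-d}(\cosg_{\dg}(A))\to\overline{\HChom}_{-d}(\cosg_{\dg}(A);\ell)$, which is induced by the cone construction defining the reduced mixed complex. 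So it is the middle arrow above. Since $d\leq-2$, both $-d$ and $-d-1$ are non-negative, and \eqref{eq:semisimple-cyclic} applies in both degrees.

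\emph{Case $d$ even.} Here $-d$ is even and $-d-1$ is odd, so $\HChom_{-d-1}(\ell)=0$. In the original sequence this makes $\HChom_{-d}(\cosg_{\dg}(A))\to\overline{\HChom}_{-d}$ surjective, with kernel the image $\operatorname{im}$ of $\HChom_{-d}(\ell)$. Dualizing a surjection $V\twoheadrightarrow V/\operatorname{im}$ gives an injection $\DD(V/\operatorname{im})\hookrightarrow\DD V$ whose image is exactly the annihilator of $\operatorname{im}$. This proves (1).

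\emph{Case $d$ odd.} Now $-d$ is odd, so $\HChom_{-d}(\ell)=0$ and $\HChom_{-d}(\cosg_{\dg}(A))\to\overline{\HChom}_{-d}$ is injective. Its cokernel is the image of $\overline{\HChom}_{-d}\to\HChom_{-d-1}(\ell)$, which by exactness equals $N$. This gives a short exact sequence
\[
0\to\HChom_{-d}(\cosg_{\dg}(A))\to\overline{\HChom}_{-d}(\cosg_{\dg}(A);\ell)\to N\to0,
\]
and dualizing it gives the displayed sequence in (2). For the lifting claim, take an ordinary right $d$-Calabi--Yau structure $\beta\in\DD\HChom_{-d}(\cosg_{\dg}(A))$ and choose any preimage $\alpha$. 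It remains to check that $\alpha$ is $\ell$-normalized, i.e.\ that its Hochschild image is non-degenerate.

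This compatibility with non-degeneracy is the only point that needs real care. We need a commutative square relating the cyclic-to-Hochschild maps for the absolute and reduced complexes through the identification \eqref{eq:reduced-HH-identification}. This square comes from naturality of $M\to M\otimes^{\mathbf L}_\Lambda k$ applied to the cone $M(\cosg_{\dg}(A))\to\overline M(\cosg_{\dg}(A);\ell)$, and the vertical map $\HHom_{-d}\to\overline{\HHom}_{-d}$ is an isomorphism because $\HHom_i(\ell)=0$ for $i\neq0$. With the square in hand, the Hochschild image of $\alpha$ equals that of $\beta$, so $\alpha$ is non-degenerate and is a normalized lift of $\beta$.
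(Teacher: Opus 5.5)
Your proposal is correct and matches the paper's proof: dualize \eqref{eq:ordinary-reduced-sequence}, use \eqref{eq:semisimple-cyclic} to kill $\HChom_{-d-1}(\ell)$ (for $d$ even) or $\HChom_{-d}(\ell)$ (for $d$ odd), and for the lifting claim note that the Hochschild class is unchanged under \eqref{eq:reduced-HH-identification}. You just write out the diagram chase and the naturality square that the paper leaves implicit.
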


\begin{proof}
Both statements follow immediately from \eqref{eq:ordinary-reduced-sequence} and \eqref{eq:semisimple-cyclic}. The underlying Hochschild class is unchanged under \eqref{eq:reduced-HH-identification}.
\end{proof}

In particular, if $d$ is odd and $\cosg_{\dg}(A)$ has a right $d$-Calabi--Yau structure, then $\pvd_{\dg}(A)$ has a right $(d+1)$-Calabi--Yau structure by \cref{thm:normalized-CY,prop:parity}.

\section{Left--right Calabi--Yau symmetry}\label{sec:left-right-CY}

Throughout this section, let $A$ be a locally finite strictly positive dg algebra with
\[
\ell=A^0\simeq k^r,\qquad B=A^!
\]
and assume that $B$ is proper. By \cref{prop:smooth-proper}, the dg category $\per_{\dg}(A)$ is smooth and $\pvd_{\dg}(A)$ is proper.

For a smooth small dg category $\A$, put
\[
\A^{\vee}:=\RHom_{\A^e}(\A,\A^e),
\]
viewed as the inverse dualizing bimodule. Following \cite{BravDyckerhoff19}, a \defterm{left $n$-Calabi--Yau structure} on $\A$ is a class in $\HN_n(\A)$ whose Hochschild image corresponds to an isomorphism
\[
\Sigma^n\A^{\vee}\xrightarrow{\sim}\A
\]
in $\D(\A^e)$.

If $P\in\pvd(A)$ and $X\in\per(A)$, then $\RHom_A(X,P)$ is perfect over $k$, since $P$ has finite-dimensional total cohomology and $X$ is finitely built from $A$. Hence $\pvd_{\dg}(A)\subseteq\per_{\dg}(A)$ consists of locally perfect objects, and Brav--Dyckerhoff construct an $S^1$-equivariant evaluation morphism, equivalently a morphism of mixed complexes,
\begin{equation}\label{eq:BD-evaluation}
\Theta:M(\per_{\dg}(A))\longrightarrow \DD M(\pvd_{\dg}(A)).
\end{equation}
The key point is that the bimodule duality underlying \eqref{eq:BD-evaluation} is ordinary Koszul duality for the enveloping algebra.

\begin{lem}\label{lem:enveloping-Morita-duality}
Let
\[
\Phi_e=\RHom_{A^e}(-,\ell^e).
\]
Under the Morita identifications $\D(\per_{\dg}(A)^e)\simeq \D(A^e)$ and the dg Koszul equivalence
\[
\pvd_{\dg}(A)\simeq\per_{\dg}(B^{\mathrm{o}})^{\mathrm{o}},
\]
the Brav--Dyckerhoff bimodule duality functor underlying \eqref{eq:BD-evaluation}, restricted to $\per(A^e)$, identifies with the Koszul dual functor $\Phi_e$ after the canonical graded flip $(B^e)^{\mathrm{o}}\simeq B^e$. In particular, it is quasi-fully faithful on $\per(A^e)$.
\end{lem}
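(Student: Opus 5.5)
The plan is to make the Brav--Dyckerhoff bimodule duality explicit and then compare it with $\Phi_e$ term by term. Write $\Tdg=\per_{\dg}(A)$ and $\Pdg=\pvd_{\dg}(A)$. In \cite{BravDyckerhoff19}, the evaluation morphism \eqref{eq:BD-evaluation} comes from a functor on bimodules
\[
\D(\Tdg^e)\longrightarrow\D(\Pdg^e)^{\mathrm{o}},\qquad
M\longmapsto \RHom_{\Tdg^e}\bigl(M,\ \Tdg(-,?)\otimes_k\DD\Tdg(?',-')\big|_{\Pdg}\bigr),
\]
or, more concretely, $M\mapsto \RHom_{\Tdg^e}\bigl(M,\RHom_k(\Tdg(-,P),\Tdg(-,P'))\bigr)$ evaluated at $P,P'\in\Pdg$. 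This functor sends the diagonal $\Tdg$ to the restricted diagonal $\Pdg$.

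First, I would replace $\Tdg$ by $A$ using Morita invariance, so that $\D(\Tdg^e)\simeq\D(A^e)$. Next, I would use that $\Pdg=\thick(S_A)$ by \cref{prop:smooth-proper}. Under the dg Koszul equivalence $\Pdg\simeq\per_{\dg}(B^{\mathrm{o}})^{\mathrm{o}}$, which sends $S_A\mapsto B^{\mathrm{o}}$, we get $\D(\Pdg^e)\simeq\D((B^{\mathrm{o}})^{\mathrm{o}}\otimes B^{\mathrm{o}})$. This is $\D(B^e)$ up to the graded flip, and restricting to $\Pdg$ amounts to evaluating at the generator $S_A$. The BD functor then becomes
\[
M\longmapsto \RHom_{A^e}\bigl(M,\RHom_k(S_A,S_A)\bigr)\simeq \RHom_{A^e}\bigl(M,\DD\ell\otimes_k\ell\bigr)\simeq\RHom_{A^e}(M,\ell^e)=\Phi_e(M).
\]
Here I use the symmetric Frobenius identification $\DD\ell\simeq\ell$, exactly as in the proof of \cref{prop:smooth-proper}. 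The $B$-bimodule structure on the right comes from the action of $\REnd_A(S_A)=B$ on both tensor factors. The key check is that this action matches the $(B^e)$-module structure produced by the Koszul equivalence. I would do this by writing both actions on the Yoneda-level model $\RHom_A(S_A,-)$ and observing that they are the same composition maps, with opposites introduced only through \cref{rmk:opposite-conventions}.

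For quasi-full faithfulness on $\per(A^e)$, I would invoke the Koszul duality for $A^e$ established in \cref{prop:smooth-proper}. There, $A^e$ is locally finite, strictly positive and pvd-finite, with proper Koszul dual $B^e$. \cite[Theorem~D(2)]{Fus24} then gives that $\Phi_e$ restricts to an equivalence $\per(A^e)\xrightarrow{\sim}\pvd((B^e)^{\mathrm{o}})^{\mathrm{o}}$ of dg-enhanced categories, and it is in particular quasi-fully faithful. Alternatively, one can argue by dévissage directly: on the generator $A^e$ we have $\Phi_e(A^e)\simeq\ell^e$ and
\[
\RHom_{A^e}(A^e,A^e)=A^e\to \RHom_{B^e}(\ell^e,\ell^e)\simeq (B^e)^!\simeq A^e,
\]
and this is a quasi-isomorphism by biduality, since $A$ is locally finite and strictly positive. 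The fully faithful locus is thick, so it contains all of $\per(A^e)$.

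The main obstacle is the identification step: showing that the abstract BD duality, defined through the locally-perfect pairing $\Tdg\otimes\Pdg$, agrees with $\Phi_e$ as functors and not merely on objects, with compatible $B^e$-actions and signs. I would first reduce everything to the single pair $(S_A,S_A)$ via thickness and then check naturality on the generator $A^e$. Once this is done, the comparison map between the two functors is a natural transformation of exact functors that is an isomorphism on the generator, hence on all of $\per(A^e)$.
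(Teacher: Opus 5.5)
Your proposal is correct and follows essentially the same route as the paper. Like the paper, you write the Brav--Dyckerhoff duality as $\RHom_A(Q\otimes_A^{\mathbf L}M,P)$ on $\pvd_{\dg}(A)$, evaluate at the split-generator $S_A=\ell$, and use tensor--Hom adjunction together with $\RHom_k(\ell,\ell)\simeq\ell^e$ to obtain $\Phi_e$ with matching $B^e$-actions; you then get quasi-full faithfulness on $\per(A^e)$ from \cite[Theorem~D(2)]{Fus24} applied to the pvd-finite algebra $A^e$ of \cref{prop:smooth-proper}.
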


\begin{proof}
For brevity in this proof only, write
\[
\Tdg=\per_{\dg}(A),\qquad \Pdg=\pvd_{\dg}(A).
\]
By \eqref{eq:pvd-thick}, the dg category $\Pdg$ is split-generated by the summands of $S_A=\ell$. Let $\mathfrak D_{\mathrm{BD}}$ denote the contravariant bimodule functor underlying the Brav--Dyckerhoff evaluation morphism; this is the functor inducing the lower horizontal morphism in the square of \cite[Theorem~3.1]{BravDyckerhoff19}. For a $\Tdg$-bimodule $\mathcal M$, set
\[
\mathfrak D_{\mathrm{BD}}(\mathcal M)(P,Q)
=
\RHom_{\Tdg}\bigl(\mathcal M(-,Q),\Tdg(-,P)\bigr)
\qquad(P,Q\in\Pdg).
\]
Under the Morita equivalence $\D(\Tdg^e)\simeq \D(A^e)$, an $A$-bimodule $M$ corresponds to the $\Tdg$-bimodule
\[
\widetilde M(X,Y)
=
\RHom_A\bigl(X,Y\otimes_A^{\mathbf L}M\bigr).
\]
Hence dg Yoneda gives, for $P,Q\in\Pdg$,
\[
\mathfrak D_{\mathrm{BD}}(\widetilde M)(P,Q)
\simeq
\RHom_A\bigl(Q\otimes_A^{\mathbf L}M,P\bigr).
\]
Evaluating at the split-generator $\ell$ in both variables and using tensor--Hom adjunction, we obtain
\[
\begin{aligned}
\mathfrak D_{\mathrm{BD}}(\widetilde M)(\ell,\ell)
&\simeq
\RHom_A\bigl(\ell\otimes_A^{\mathbf L}M,\ell\bigr)\\
&\simeq
\RHom_{A^e}\bigl(M,\RHom_k(\ell,\ell)\bigr)\\
&\simeq
\RHom_{A^e}(M,\ell^e)
=
\Phi_e(M),
\end{aligned}
\]
where the third isomorphism is the identification $\RHom_k(\ell,\ell)\simeq\ell^e$ from the proof of \cref{prop:smooth-proper}.

The object $(\ell,\ell)$ split-generates $\Pdg^e$, and its endomorphism dg algebra is
\[
\REnd_{\Pdg^e}((\ell,\ell))
\simeq
B^{\mathrm{o}}\otimes_k B
=
B^e.
\]
On the other hand, the K\"unneth formula gives
\[
(A^e)^!\simeq B^e.
\]
Evaluation at the split-generator $(\ell,\ell)$ realizes the Morita equivalence $\D(\Pdg^e)\simeq\D(B^e)$, and the adjunction isomorphisms above are natural in $M$ and intertwine the induced $B^e$-actions. Thus, after the canonical graded flip $(B^e)^{\mathrm{o}}\simeq B^e$, $\mathfrak D_{\mathrm{BD}}$ restricted to $\per(A^e)$ is naturally identified with the Koszul dual functor $\Phi_e$.

Under this identification,
\[
\mathfrak D_{\mathrm{BD}}(\Tdg)\simeq\Pdg,
\qquad
\mathfrak D_{\mathrm{BD}}(\Tdg^{\vee})\simeq\DD\Pdg.
\]
Indeed, the diagonal $\Tdg$ corresponds to $A$, and the computation in \cref{prop:smooth-proper} gives $\Phi_e(A)\simeq B$. Moreover, since $A\in\per(A^e)$, its inverse dualizing bimodule $A^{\vee}=\RHom_{A^e}(A,A^e)$ is perfect, and dualizability gives
\[
\begin{aligned}
\Phi_e(A^{\vee})
&=\RHom_{A^e}(A^{\vee},\ell^e)\simeq A\otimes_{A^e}^{\mathbf L}\ell^e\\
&\simeq\DD\RHom_{A^e}(A,\ell^e)\simeq\DD B.
\end{aligned}
\]
Finally, $A^e$ is pvd-finite by the proof of \cref{prop:smooth-proper}. Therefore \cite[Theorem~D(2)]{Fus24} makes $\Phi_e$ fully faithful on $\per(A^e)$. Applying this to all shifts shows that the induced morphisms of dg mapping complexes are quasi-isomorphisms. Hence $\Phi_e$, and therefore $\mathfrak D_{\mathrm{BD}}$, is quasi-fully faithful on $\per(A^e)$.
\end{proof}

\begin{thm}[Mixed-complex duality and left--right Calabi--Yau symmetry]\label{thm:left-right-CY}
Let $A$ be a locally finite strictly positive dg algebra with $A^0\simeq k^r$ and proper Koszul dual $B=A^!$. Then the Brav--Dyckerhoff evaluation morphism
\[
\Theta:M(\per_{\dg}(A))\longrightarrow\DD M(\pvd_{\dg}(A))
\]
is a quasi-isomorphism of mixed complexes. Consequently, for every $n\in\mathbb Z$ it induces canonical isomorphisms
\begin{equation}\label{eq:left-right-HH}
\Theta_n^{\mathrm{HH}}:\HHom_n(\per_{\dg}(A))\xrightarrow{\sim}\DD\HHom_{-n}(\pvd_{\dg}(A))
\end{equation}
and
\begin{equation}\label{eq:left-right-HN-HC}
\Theta_n:\HN_n(\per_{\dg}(A))\xrightarrow{\sim}\DD\HChom_{-n}(\pvd_{\dg}(A)).
\end{equation}
Under the second isomorphism, left $n$-Calabi--Yau structures on $\per_{\dg}(A)$ correspond bijectively to right $n$-Calabi--Yau structures on $\pvd_{\dg}(A)$.
\end{thm}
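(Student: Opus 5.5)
The plan is to reduce the mixed-complex statement to a statement about underlying complexes, and then to a bimodule computation handled by \cref{lem:enveloping-Morita-duality}.

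Since $\Theta$ is a morphism of dg $\Lambda$-modules, it is a quasi-isomorphism of mixed complexes as soon as it is a quasi-isomorphism of underlying complexes. So the main task is to show that the Hochschild component $M(\per_{\dg}(A))\to\DD M(\pvd_{\dg}(A))$ is a quasi-isomorphism. To do this, I will use the square of \cite[Theorem~3.1]{BravDyckerhoff19}. This square writes the Hochschild-level evaluation map as the composite of two maps. The first is the map of $\RHom_{\Tdg^e}(\Tdg^{\vee},\Tdg)$, which computes $M(\Tdg)$ by smoothness via \cref{prop:smooth-proper}. The second is the map induced by the bimodule duality functor $\mathfrak D_{\mathrm{BD}}$ on mapping complexes. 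Its target is
\[
\RHom_{\Pdg^e}\bigl(\mathfrak D_{\mathrm{BD}}\Tdg,\mathfrak D_{\mathrm{BD}}\Tdg^{\vee}\bigr)\simeq\RHom_{\Pdg^e}(\Pdg,\DD\Pdg)\simeq\DD M(\Pdg).
\]
The identifications $\mathfrak D_{\mathrm{BD}}(\Tdg)\simeq\Pdg$ and $\mathfrak D_{\mathrm{BD}}(\Tdg^{\vee})\simeq\DD\Pdg$ are exactly those recorded at the end of the proof of \cref{lem:enveloping-Morita-duality}.

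Both $\Tdg\simeq A$ and $\Tdg^{\vee}\simeq A^{\vee}$ lie in $\per(A^e)$, by smoothness. By \cref{lem:enveloping-Morita-duality}, $\mathfrak D_{\mathrm{BD}}$ is quasi-fully faithful on $\per(A^e)$. Hence the middle map
\[
\RHom_{A^e}(A^{\vee},A)\to\RHom_{B^e}(\Phi_e A,\Phi_e A^{\vee})
\]
is a quasi-isomorphism, and $\Theta$ is a quasi-isomorphism.

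\textbf{Main obstacle.} The hard step is verifying that the Brav--Dyckerhoff map really factors this way: the outer identifications in their square must be the canonical ones, and they must be compatible with our Morita and Koszul identifications. Concretely, one has to check two things.
\begin{itemize}
\item The isomorphism $\RHom_{\Pdg^e}(\Pdg,\DD\Pdg)\simeq\DD(\Pdg\otimes^{\mathbf L}_{\Pdg^e}\Pdg)$ is the one used in \cite{BravDyckerhoff19}.
\item Properness of $\Pdg$, from \cref{prop:smooth-proper}, is what makes $\DD\Pdg$ behave well. This is needed to use locally perfect objects.
\end{itemize}
I would first try to cite \cite[Theorem~3.1]{BravDyckerhoff19} directly for the commutative square. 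I would then check the sign and opposite conventions through the graded flip $(B^e)^{\mathrm{o}}\simeq B^e$.

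Once $\Theta$ is a quasi-isomorphism of mixed complexes, the homological consequences follow formally.
\begin{itemize}
\item Taking $H_n$ gives \eqref{eq:left-right-HH}, because $H_n(\DD M)=\DD H_{-n}(M)$ over a field.
\item Applying $\RHom_\Lambda(k,-)$ gives \eqref{eq:left-right-HN-HC}, using
\[
\RHom_\Lambda(k,\DD M)\simeq\DD(M\otimes^{\mathbf L}_\Lambda k).
\]
This is tensor--Hom adjunction with $\DD=\Hom_k(-,k)$ exact.
\end{itemize}

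\textbf{Calabi--Yau correspondence.} The map $\HN\to\HHom$ corresponds under $\Theta$ to the dual of $\HHom\to\HChom$, by naturality of the augmentation. So it remains to check that non-degeneracy matches. A class $\eta\in\HHom_n(\Tdg)$ gives a map $\Sigma^n\Tdg^{\vee}\to\Tdg$. The functor $\mathfrak D_{\mathrm{BD}}$ sends this map to the map $\Sigma^{-n}\Pdg\to\DD\Pdg$ classified by $\Theta_n^{\mathrm{HH}}(\eta)$, up to the shift convention. Since $\mathfrak D_{\mathrm{BD}}$ is quasi-fully faithful on $\per(A^e)$, a morphism there is an isomorphism if and only if its image is. Therefore left $n$-Calabi--Yau structures correspond exactly to right $n$-Calabi--Yau structures.
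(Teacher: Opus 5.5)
Your proposal follows the paper's proof essentially step for step. It uses the Brav--Dyckerhoff square of \cite[Theorem~3.1]{BravDyckerhoff19}, the identifications $\mathfrak D_{\mathrm{BD}}(\Tdg)\simeq\Pdg$ and $\mathfrak D_{\mathrm{BD}}(\Tdg^{\vee})\simeq\DD\Pdg$ with quasi-full faithfulness on $\per(A^e)$ from \cref{lem:enveloping-Morita-duality}, the formal passage to $\HN$ and $\HChom$ via $\RHom_\Lambda(k,\DD M)\simeq\DD(M\otimes^{\mathbf L}_\Lambda k)$, and reflection of isomorphisms for the Calabi--Yau correspondence. One small correction: because $\mathfrak D_{\mathrm{BD}}$ is contravariant, it sends $\Sigma^n\Tdg^{\vee}\to\Tdg$ to $\Pdg\to\Sigma^{-n}\DD\Pdg$, that is, to $\Sigma^n\Pdg\to\DD\Pdg$ (not $\Sigma^{-n}\Pdg\to\DD\Pdg$), which is exactly the shape of a right $n$-Calabi--Yau class.
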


\begin{proof}
For brevity in this proof only, write
\[
\Tdg=\per_{\dg}(A),\qquad \Pdg=\pvd_{\dg}(A).
\]
Write $C_{\mathrm{HH}}(\A)$ for the underlying Hochschild complex of the mixed complex $M(\A)$. On underlying Hochschild complexes, the Brav--Dyckerhoff morphism fits into the commutative square of \cite[Theorem~3.1]{BravDyckerhoff19}
\[
\begin{tikzcd}[column sep=large,row sep=large]
C_{\mathrm{HH}}(\Tdg) \ar[r,"\Theta_{\mathrm{HH}}"] \ar[d,"\sim"'] & \DD C_{\mathrm{HH}}(\Pdg) \ar[d,"\sim"] \\
\RHom_{\Tdg^e}(\Tdg^{\vee},\Tdg) \ar[r] & \RHom_{\Pdg^e}(\Pdg,\DD\Pdg),
\end{tikzcd}
\]
where the lower horizontal morphism is induced by the bimodule duality functor appearing in \cref{lem:enveloping-Morita-duality}. Since $\Tdg$ is smooth, both $\Tdg^{\vee}$ and $\Tdg$ correspond to perfect $A^e$-modules. By \cref{lem:enveloping-Morita-duality}, the bimodule duality sends them to $\DD\Pdg$ and $\Pdg$, respectively, and is quasi-fully faithful on these objects. Hence the lower horizontal morphism and therefore $\Theta_{\mathrm{HH}}$ are quasi-isomorphisms. Thus $\Theta$, already a morphism of mixed complexes, is a quasi-isomorphism. This proves \eqref{eq:left-right-HH}.

Applying $\RHom_{\Lambda}(k,-)$ to \eqref{eq:BD-evaluation} and using the definitions from \cref{sec:cy} together with the standard tensor--dual adjunction yields
\[
\HN_n(\Tdg)
\xrightarrow{\sim}
H_n\RHom_{\Lambda}(k,\DD M(\Pdg))
\simeq
\DD\HChom_{-n}(\Pdg),
\]
which is \eqref{eq:left-right-HN-HC}.

It remains to compare non-degeneracy. Let $\xi\in\HN_n(\Tdg)$ and let
\[
\varphi_{\xi}:\Sigma^n\Tdg^{\vee}\longrightarrow\Tdg
\]
be the bimodule morphism determined by its Hochschild image. By the construction of the Brav--Dyckerhoff square and the definition of $\mathfrak D_{\mathrm{BD}}$ in \cref{lem:enveloping-Morita-duality}, the bimodule morphism associated with $\Theta_n(\xi)$ is obtained by applying this contravariant bimodule duality functor to $\varphi_{\xi}$. Since this functor reverses shifts, it sends
\[
\varphi_{\xi}:\Sigma^n\Tdg^{\vee}\longrightarrow\Tdg
\]
to a morphism
\[
\Pdg\longrightarrow \Sigma^{-n}\DD\Pdg,
\]
or equivalently
\[
\Sigma^n\Pdg\longrightarrow \DD\Pdg,
\]
which is exactly the bimodule morphism defining the right $n$-Calabi--Yau class $\Theta_n(\xi)$. The corresponding computation in the pseudocompact setting appears in \cite[Proposition~4.4.1]{KellerLiuAmiot24}. By \cref{lem:enveloping-Morita-duality}, the duality functor is quasi-fully faithful on the perfect $A^e$-modules containing the source and target of $\varphi_{\xi}$, and hence reflects isomorphisms there. Consequently,
\[
\varphi_{\xi}\text{ is an isomorphism}
\quad\Longleftrightarrow\quad
\Theta_n(\xi)\text{ is non-degenerate}.
\]
This proves the asserted bijection of Calabi--Yau structures.
\end{proof}

\begin{rmk}\label{rmk:pseudocompact-comparison}
Keller--Liu prove an analogous left--right correspondence using pseudocompact Koszul duality \cite[Proposition~4.4.1]{KellerLiuAmiot24}, while Holstein--Rivera obtain related results in a coalgebraic setting \cite{HolsteinRivera24}. Here no topological or coalgebraic replacement is needed: properness of the ordinary Koszul dual implies smoothness of $A$, and ordinary Koszul duality for $A^e$ identifies the Brav--Dyckerhoff evaluation morphism. In particular, \cref{thm:left-right-CY} is independent of the $d$-stability hypothesis used in \cref{sec:cy}.
\end{rmk}

Combining \cref{thm:left-right-CY} with the normalized localization correspondence of \cref{thm:normalized-CY} gives the following three-way symmetry.

\begin{cor}\label{cor:three-way-CY}
Let $d\leq-2$, and let $A$ be a $d$-stable locally finite strictly positive dg algebra with $\ell=A^0\simeq k^r$. Then there are canonical bijections
\[
\begin{gathered}
\left\{\begin{array}{c}\text{left $(d+1)$-Calabi--Yau structures}\\[-1mm]\text{on $\per_{\dg}(A)$}\end{array}\right\}
\xrightarrow{\sim}
\left\{\begin{array}{c}\text{right $(d+1)$-Calabi--Yau structures}\\[-1mm]\text{on $\pvd_{\dg}(A)$}\end{array}\right\}\\
\xrightarrow{\sim}
\left\{\begin{array}{c}\text{$\ell$-normalized right $d$-Calabi--Yau structures}\\[-1mm]\text{on $\cosg_{\dg}(A)$}\end{array}\right\}.
\end{gathered}
\]
\end{cor}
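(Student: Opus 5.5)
The plan is to compose the two correspondences already established, after checking that their hypotheses hold. Since $A$ is $d$-stable, \cref{prop:d-stable-selfinjective} shows that the Koszul dual $B=A^!$ is $(-d)$-self-injective and, in particular, proper. Hence the standing assumptions of \cref{sec:left-right-CY} are satisfied, and \cref{prop:smooth-proper} makes $\per_{\dg}(A)$ smooth and $\pvd_{\dg}(A)$ proper. Left Calabi--Yau structures on $\per_{\dg}(A)$ and right Calabi--Yau structures on $\pvd_{\dg}(A)$ are therefore both defined in the sense used above.

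For the first bijection, I will apply \cref{thm:left-right-CY} with $n=d+1$. The isomorphism $\Theta_{d+1}:\HN_{d+1}(\per_{\dg}(A))\xrightarrow{\sim}\DD\HChom_{-d-1}(\pvd_{\dg}(A))$ restricts to a bijection from left $(d+1)$-Calabi--Yau structures on $\per_{\dg}(A)$ to right $(d+1)$-Calabi--Yau structures on $\pvd_{\dg}(A)$. This uses only properness of $B$, as noted in \cref{rmk:pseudocompact-comparison}. For the second bijection, I will take the dual of the reduced cyclic boundary, $\delta_{\mathrm{red}}^\vee:\DD\HChom_{-d-1}(\pvd_{\dg}(A))\xrightarrow{\sim}\DD\overline{\HChom}_{-d}(\cosg_{\dg}(A);\ell)$. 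By \cref{thm:normalized-CY}, which needs $d\le -2$ and $d$-stability, it restricts to a bijection from right $(d+1)$-Calabi--Yau structures on $\pvd_{\dg}(A)$ to $\ell$-normalized right $d$-Calabi--Yau structures on $\cosg_{\dg}(A)$. The composite $\delta_{\mathrm{red}}^\vee\circ\Theta_{d+1}$ then gives the full chain.

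The only point needing care is canonicity. Both maps come from canonical morphisms: the Brav--Dyckerhoff evaluation $\Theta$, and the boundary of Keller's localization triangle for the reduced mixed complexes in \cref{prop:reduced-localization}. Neither depends on a choice of Calabi--Yau class. So the bijections are canonical once the dg enhancements are fixed.

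I also need the middle set to be literally the same set in both theorems, namely classes in $\DD\HChom_{-d-1}(\pvd_{\dg}(A))$ whose Hochschild image is non-degenerate. Both theorems use the definition of right Calabi--Yau structure from \cref{sec:cy}, so this holds, and the argument reduces to bookkeeping. If a sign discrepancy arises in the identification $H_n\RHom_\Lambda(k,\DD M)\simeq\DD\HChom_{-n}$, it only rescales classes by $\pm1$. Non-degeneracy is preserved under such rescaling, so the bijections are unaffected.
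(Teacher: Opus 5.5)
Your proposal is correct and matches the paper's proof, which simply composes \cref{thm:left-right-CY} at $n=d+1$ with the bijection of \cref{thm:normalized-CY}. Your explicit check that $d$-stability makes $B$ proper via \cref{prop:d-stable-selfinjective}, so that \cref{thm:left-right-CY} applies, is the only hypothesis verification needed; the sign remark is harmless but unnecessary.
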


\defbibheading{bibliography}[\refname]{\section*{#1}}
\printbibliography

\end{document}